\documentclass[UTF-8]{article}
\usepackage{amssymb}
\usepackage{mathrsfs}
\usepackage{graphicx}
\usepackage{amsmath}
\usepackage{amsthm}
\allowdisplaybreaks[4]
\usepackage{dsfont}
\usepackage{authblk}
\usepackage{subcaption}
\usepackage{geometry}
\usepackage{booktabs}
\usepackage{algorithm}
\usepackage{algorithmic}
\usepackage{xcolor}
\usepackage{hyperref}
\numberwithin{equation}{section}
\newtheorem{definition}{Definition}
\newtheorem{assumption}{Assumption}

\newtheorem{lemma}{Lemma}
\newtheorem{proposition}{Proposition}
\newtheorem{theorem}{Theorem}
\newtheorem{remark}{Remark}

\newcommand{\dd}{\mathsf {d\kern -0.07em l}} 
\newcommand{\bgeqn}{\begin{eqnarray}}
\newcommand{\edeqn}{\end{eqnarray}}
\newcommand{\bgeq}{\begin{eqnarray*}}
\newcommand{\edeq}{\end{eqnarray*}}
\newcommand{\bec}{\begin{center}}

\def\R{\mathbb{R}}

\title{Adaptive Distributionally Robust Optimal Control with Bayesian Ambiguity Sets}
\author[1,2]{Wentao Ma\thanks{Email: \texttt{mwtmwt7@stu.xjtu.edu.cn}}}
\author[1,2]{Zhiping Chen\thanks{Email: \texttt{zchen@mail.xjtu.edu.cn}}}
\author[3]{Huifu Xu\thanks{Email: 
\texttt{hfxu@se.cuhk.edu.hk}}}
\author[4]{Enlu Zhou\thanks{Email: 
\texttt{enlu.zhou@isye.gatech.edu}}}

\affil[1]{School of Mathematics and Statistics, Xi'an Jiaotong University, Xi'an, Shaanxi, P. R. China}
\affil[2]{Research Center for Optimization Technology and Intelligent Game, Xi'an International Academy for Mathematics and Mathematical Technology, Xi'an, P. R. China}
\affil[3]{Department of Systems Engineering and Engineering Management, The Chinese University of Hong Kong, Hong Kong}
\affil[4]{H. Milton Stewart School of Industrial and Systems Engineering, Georgia Institute of Technology, Atlanta, USA}
\date{}

\begin{document}
	\maketitle
\begin{abstract}
In stochastic optimal control (SOC), uncertainty may arise from incomplete knowledge of the true probability distribution of the underlying environment, which is known as Knightian or epistemic uncertainty. Distributionally robust optimal control (DROC) models are subsequently proposed to tackle this source of uncertainty. While such models are effective in some practical applications, 
most existing DROC models are offline and can be overly conservative when data are scarce.
Moreover, they cannot be applied to the case when samples are generated episodically.
Motivated by the Bayesian SOC framework recently proposed by Shapiro et al.~\cite{shapiro2025episodic}, we propose an adaptive DROC model in which the ambiguity set is updated via Bayesian learning from new data.
Under some moderate conditions,
we derive a tractable risk-averse reformulation, establish consistency of the optimal value function and optimal policy for an infinite-horizon SOC and establish a finite-sample posterior credibility guarantee for the policy value induced by the proposed episodic Bayesian DROC model.
We also
study the stability 
and statistical robustness
of the proposed model 
with respect to sample perturbations that often arise in data-driven environments. 
To solve the episodic Bayesian DROC model, we propose a Bellman-operator cutting-plane (BOCP) algorithm
that is computationally efficient and provably convergent.
Numerical results on an inventory control problem demonstrate the effectiveness, adaptivity, and robust performance of the proposed model and algorithm.
\end{abstract}

\noindent 
\textbf{Key words.} Adaptive DROC, Bayesian ambiguity set,
consistency,
stability and statistical robustness, BOCP algorithm

\section{Introduction}
Stochastic optimal control (SOC) provides a principled framework for sequential decision-making under uncertainty, where system dynamics evolve according to stochastic processes.
In classical SOC models, the underlying randomness is often assumed to follow a known probability distribution. For instance, a standard infinite-horizon discrete-time
SOC problem \cite{bertsekas1996stochastic,hernandez2012further}
seeks an optimal policy $\pi$ (a decision rule mapping states to actions) over the feasible policy set $\Pi$ that minimizes the expected long-term discounted cost:
\begin{equation}\label{eq-whole}
	\min_{\pi \in \Pi}\mathbb{E}^{\pi}_{P^c} \left[ \sum_{t=1}^{\infty} \gamma^{t-1}\mathcal{C}(s_t,a_t, \xi_t)\right],
\end{equation}
where the expectation is taken with respect to the known joint distribution of the random variables involved \cite{bertsekas2012dynamic}. Here,
\(s_t\in\mathcal{S}\subseteq\mathbb{R}^m\) and \(a_t\in\mathcal{A}\subseteq\mathbb{R}^n\) denote the system state and action at time $t$, respectively,
\(\mathcal{S}\) and \(\mathcal{A}\) are nonempty closed sets. The cost function \(\mathcal{C}: \mathcal{S} \times \mathcal{A} \times \Xi \to\mathbb{R}\) represents the immediate cost, with \(\gamma \in [0, 1)\) being the discount factor. The randomness is modeled by an independent and identically distributed (i.i.d.) sequence \(\{\xi_t\}_{t\geq1}\) of random vectors with support set $\Xi\subseteq\mathbb{R}^k$ and 
probability distribution \(P^c \in \mathscr{P}(\Xi)\), where \(\mathscr{P}(\Xi)\) denotes the set of all Borel probability measures on \((\Xi,\mathcal{B}(\Xi))\). The dynamics follow \(s_{t+1} = g(s_t, a_t,\xi_t)\), where \(g:\mathcal{S}\times\mathcal{A}\times\Xi\to\mathcal{S}\) is the transition function.

The SOC model has been extensively applied across various domains \cite{bertsekas1996stochastic}. The formulation \eqref{eq-whole} fits within the framework of dynamic stochastic programming by treating the state-action pair $(s_t, a_t)$ as the decision, exemplified in inventory management \cite{shapiro2021lectures} and hydrothermal planning \cite{guigues2023risk}. Moreover, \(g\) together with the distribution of \(\xi_t\) induces the transition kernel of a Markov decision process (MDP) \cite{lin2022bayesian}. For a detailed discussion on the connections between SOC and MDP frameworks, readers may refer to Chapter 3.5 of \cite{puterman2014markov}.

It is well known \cite{bertsekas1996stochastic,wang2023bayesian} that the optimal value function \(V^*: \mathcal{S} \to\mathbb{R}\) 
and the optimal policy \(\pi^*\) for \eqref{eq-whole} can be derived by the Bellman equation:
\begin{equation}\label{true}
	V^{*}(s) = \inf_{a \in \mathcal{A}} \mathbb{E}_{P^c} \left[ \mathcal{C}(s,a, \xi) + \gamma V^{*}(g(s, a, \xi)) \right].
\end{equation}
However, in some practical applications, particularly data-driven problems, the true distribution \(P^c\) is unknown and consequently
solving \eqref{true} 
directly is infeasible.
This introduces an additional layer of uncertainty beyond the intrinsic randomness of \(\xi\); 
see, e.g., \cite{wang2024aleatoric,ma2024bayesian} and the references therein.
Such uncertainty about $P^c$ is referred to as
\emph{epistemic (Knightian) uncertainty} \cite{dibiasi2021measuring}, which may arise from incomplete information about \(P^c\) due to limited, noisy, or contaminated observations. In contrast, the irreducible randomness in the system represented by \(\xi\), even when \(P^c\) is known, is termed \emph{aleatoric uncertainty}.
The distributional uncertainty prevents exact evaluation of the expectation in \eqref{true} and motivates data-driven approximations and robust control strategies.

A common approach approximates the unknown distribution by a nominal distribution $\hat{P}_N^\mathrm{E}$\footnote{We use the subscript $N$ to indicate that the variable is constructed from the sample set with $N$ observations.}
derived via methods such as Monte Carlo sampling \cite{cooper2012performance,haskell2016empirical} (also known as sample average approximation (SAA) 
in stochastic programming), leading to the empirical Bellman equation: 
\begin{equation}\label{SAA}
	V_N^{\mathrm{E}}(s) = \inf_{a \in \mathcal{A}} \mathbb{E}_{\hat{P}_N^\mathrm{E}} [\mathcal{C}(s,a, \xi) + \gamma V_{N}^{\mathrm{E}}(g(s, a, \xi))],
\end{equation}
where $V^{\mathrm{E}}_N$ denotes the associated value function. Although convergence results for \eqref{SAA} are well established in \cite{haskell2016empirical}, the empirical approach can be sensitive to data scarcity and model misspecification, which may yield policies that perform poorly in the true environment.
To mitigate approximation errors due to limited sample size, 
distributionally robust optimization (DRO) has subsequently
been integrated into the SOC framework. By constructing an ambiguity set $\mathcal{P}\subseteq\mathscr{P}(\Xi)$ around the empirical distribution, distributionally robust optimal control (DROC) optimizes against the worst-case distribution within $\mathcal{P}$ \cite{xu2010distributionally,yang2020wasserstein}. The DROC problem yields the distributionally robust Bellman equation:
\begin{equation}\label{DR}
	V^{\mathrm{DRO}}(s) = \inf_{a \in \mathcal{A}}\sup_{{P}\in \mathcal{P}} \mathbb{E}_{P} [\mathcal{C}(s,a, \xi) + \gamma V^{\mathrm{DRO}}(g(s, a, \xi))],
\end{equation}
where $V^{\mathrm{DRO}}$ denotes the corresponding value function. Various constructions of $\mathcal{P}$ have been explored, including moment-based sets \cite{delage2010distributionally,van2015distributionally} and Wasserstein distance-based sets \cite{kim2023distributional,hanasusanto2015distributionally,taskesen2023distributionally} for linear-quadratic control, and total variation distance-based sets \cite{tzortzis2019infinite} for infinite-horizon average cost control.
By duality principles, many distributionally robust formulations admit equivalent or closely related risk-averse interpretations \cite{guigues2023risk,shapiro2021lectures}.
Despite their robustness, many existing DROC models are calibrated from offline data and may not adapt efficiently to newly observed samples \cite{shapiro2025episodic}. 

Another important research direction for addressing distributional uncertainty is Bayesian adaptive control \cite{osband2013more,strens2000bayesian}, which integrates learning of the unknown dynamics with policy optimization by continuously updating distribution estimates from observed data via Bayesian inference.
Such approaches typically assume that $P^c$ belongs to a parametric family $\{{P}_\theta : \theta \in \Theta\}$ with parameter space
$\Theta\subseteq\mathbb{R}^d$. That is, there exists an unknown parameter $\theta^c\in\Theta$ such that $P^c \equiv P_{\theta^c}$.
One starts from a prior distribution $\mu_0$ based on the available information and then updates it at each episode using newly observed data. 
The resulting posterior distribution \(\mu_N\) quantifies the epistemic uncertainty and can be viewed as a belief (information) state, augmenting the physical state to form an extended state space. 
Dynamic programming techniques applied to this augmented state space yield policies that treat distributional uncertainty as a partially observed state \cite{lin2022bayesian,rieder1975bayesian}. Further, a unified framework for SOC models based on the Bayesian composite risk approach was proposed in \cite{ma2024bayesian}.
Nevertheless, such approaches often become computationally intractable due to the infinite-dimensional nature of the belief state. Recently, Shapiro et al.~\cite{shapiro2025episodic} proposed an adaptive episodic approximation that focuses on the Bayesian average counterpart of \eqref{true} with respect to the posterior $\mu_N$:
\begin{equation}\label{BE}
	V_N^{\mathrm{B}}(s) = \inf_{a\in \mathcal{A}} \mathbb{E}_{\mu_N}\mathbb{E}_{{P}_{\theta}} [\mathcal{C}(s,a,\xi) + \gamma V_{N}^{\mathrm{B}}(g(s,a,\xi))],
\end{equation}
where $V^{\mathrm{B}}_N$ denotes the value function associated with \eqref{BE}. Despite its adaptivity,
this method can reduce to a point-estimation-based technique in some cases. For instance, when the parametric family depends affinely on $\theta$ (e.g., finite mixture family \cite{ma2025bayesian}), the value function induced by \eqref{BE} coincides with that of \eqref{SAA} with the nominal distribution chosen as $\hat{P}_N^\mathrm{E}:=P_{\hat{\theta}_N}$, where $\hat{\theta}_N=\mathbb{E}_{\mu_N}[\theta]$ is the posterior mean \cite{chen2024bayesian}.
Consequently, this method may inherit part of the sensitivity to data variability exhibited by the empirical method \eqref{SAA}. Moreover, when prior structural knowledge is limited, reliance on a specific low-dimensional family may increase the risk of model misspecification in practice, especially when the underlying data exhibit features such as multimodality, skewness, or heavy tails \cite{xie2021nonparametric}. 
Such features may induce persistent model misspecification even with large datasets.

More broadly, vulnerability to imperfect data is a generic issue in data-driven optimal control. In practice, data-driven SOC and DROC models depend critically on finite historical data and may be affected by poor data quality, limited sample sizes, data contamination, or adversarial perturbations \cite{xu2021quantitative,pichler2022quantitative}. Such effects can distort the estimated uncertainty model and, in turn, compromise the robustness and stability of the resulting policies. While related questions have been extensively studied in static DRO problems \cite{lam2019recovering}, their dynamic counterparts in DROC remain relatively underexplored, especially from the perspectives of quantitative stability and statistical robustness.

These considerations motivate two complementary lines of analysis: (i) developing an episodic, data-adaptive DROC formulation to handle epistemic uncertainty; and (ii) establishing stability and statistical robustness under data perturbations and contamination.
To this end, we propose an episodic Bayesian DROC framework and study the associated Bellman equation:
\begin{equation}\label{eq:BDR-Bellman}
	V_N(s) = \inf_{a \in \mathcal{A}}\sup_{{P}\in \mathcal{P}_{\mu_N}} \mathbb{E}_{P} [\mathcal{C}(s,a, \xi) + \gamma V_{N}(g(s, a, \xi))],
\end{equation}
where $V_N$ denotes the associated value function and $\mathcal{P}_{\mu_N}$ is a Bayesian ambiguity set induced by the posterior $\mu_N$, which is updated using newly observed data after each episode. 
Unlike standard DROC with a fixed ambiguity set, our formulation combines the robustness of \eqref{DR} with the Bayesian adaptivity of \eqref{BE}, thereby quantifying and mitigating epistemic uncertainty through adaptive posterior learning while preserving distributional robustness.
To make \eqref{eq:BDR-Bellman} operational and theoretically well-founded, we focus on the following key questions:
(a) how to explicitly construct the ambiguity set $\mathcal{P}_{\mu_N}$ and derive a tractable reformulation of \eqref{eq:BDR-Bellman}; (b) whether the induced Bellman operator admits a unique fixed point and the corresponding optimal value function is well-defined; (c) how the resulting value function and the associated optimal policy respond statistically to changes in the observations; and (d) how to efficiently compute the optimal value function and policy, particularly in the multi-episode setting where the ambiguity set is repeatedly updated.
The main contributions of this study include:
\begin{itemize}

\item \textbf{Modeling framework.} We introduce 
an adaptive Bayesian DROC model
where the ambiguity set is updated episodically.
An important feature of the model is that the center of
the ambiguity set is updated via Bayes' rule using the samples observed in each episode and its radius is calibrated via a $(1-\alpha)$-credible region (Proposition~\ref{prop-ambiguity}, Remark~\ref{rem-1}).
The Bayesian DROC model reduces to a Bayesian SOC model when $\alpha=1$ (Remark~\ref{rem-2}). 
Unlike traditional Bayesian models, which typically assume that the true distribution belongs to or can be well approximated by a parametric family, we consider instead the case where the true distribution has a known finite support set but an unknown probability vector, which is learned from data.
This makes the proposed model applicable to arbitrary finite-support distributions without imposing specific shape restrictions such as unimodality, symmetry, or light tails.
The framework can also be extended to settings where the true distribution is continuous, by first discretizing it and then applying the proposed method to obtain an approximate solution.
Finally, the box-type ambiguity set yields an explicit mean-risk reformulation of the worst-case expectation (Theorem \ref{thm-reformulation}) as a convex combination of an expectation term and a CVaR term under suitable reference measures.
This yields a coherent risk-measure interpretation of the induced Bellman operator and clarifies how robustness against epistemic uncertainty translates into risk aversion in the SOC objective.

\item \textbf{Theoretical analysis.} 
We conduct theoretical analysis from three perspectives: episodic asymptotic convergence analysis of the ambiguity set and the resulting robust value function as the number of episodes $N$ tends to infinity (Theorems \ref{thm-bc1} and \ref{thm-bc-2});
quantitative stability analysis of the value function and optimal policies with respect to perturbations in the ambiguity set (Theorems~\ref{thm-stab-value} and \ref{thm-sta-policy});
and statistical robustness analysis of the statistical estimators of the value function under potential contamination of sample data (Theorem \ref{thm-qsr}). Unlike 
standard analysis in the MDP/SOC literature under the sup-norm \cite{shapiro2025episodic}, all analyses are performed under a weighted supremum norm to accommodate potentially unbounded costs.
The episodic convergence analysis addresses not only the consistency of the value function but also establishes a finite-sample posterior credibility guarantee for the associated policy value (Theorem~\ref{thm-finite-sample-guarantee}), i.e., the robust value function obtained by solving \eqref{eq:BDR-Bellman} serves as a posterior-credible upper bound on the true value of the corresponding policy at a prescribed credibility level.
Notably, a $(1-\alpha)$ posterior credible ambiguity set calibrated from the one-step model already yields the same $(1-\alpha)$ posterior guarantee for the infinite-horizon discounted problem, with no horizon-dependent adjustment.
The quantitative stability analysis yields explicit bounds on the value function in terms of perturbations in the ambiguity set. Finally, the statistical robustness analysis quantifies the impact of contaminated samples on the induced distribution of the estimated value function.

\item \textbf{Solution method.}
We develop a Bellman-operator cutting-plane (BOCP) algorithm for the infinite-horizon Bayesian DROC problem and establish a monotonicity property (Lemma~\ref{lem:valid}) and uniform convergence on compact state spaces (Theorem~\ref{thm:cp-episode}).
The proposed method iteratively refines the value-function approximation via a sequence of cutting-plane updates to the Bellman operator.
To further reduce computational cost in the multi-episode setting, we develop a provably safe warm-start mechanism that reuses cutting planes across posterior updates via an explicit validity test (Proposition \ref{prop:warm-max}).
We validate the approach on an inventory control problem, illustrating its convergence, robustness properties, and adaptivity across episodes.
\end{itemize}

The rest of the paper is 
structured as follows. Section~2 introduces the episodic Bayesian DROC model with posterior-driven box-type ambiguity sets via Bayesian adaptive learning. We also derive an equivalent mean-risk reformulation of the resulting Bellman operator.
Section~3 
establishes the asymptotic convergence of the episodic Bayesian DROC value functions and optimal policies to their true counterparts as the number of episodes grows, and also presents a finite-sample posterior credibility guarantee for the associated policy value based on posterior credible ambiguity sets.
Section~4 studies stability and statistical robustness of the proposed model under data perturbations and contamination.
Section~5 develops a computational method based on a Bellman-operator cutting-plane (BOCP) scheme.
Section~6 reports numerical results for the episodic Bayesian DROC model, illustrating the effectiveness and applicability.

Throughout the paper, we use the following notation. For a metric space $(\mathbb{X}, \dd)$, we write $\dd(x, S)$ for the distance from a point $x\in\mathbb{X}$ to a set $S\subseteq\mathbb{X}$, $\mathbb{D}\left(S_1, S_2 ; \dd\right)$ for the excess of a set $S_1\subseteq\mathbb{X}$ over another set $S_2\subseteq\mathbb{X}$ associated with distance $\dd$, i.e.,
$$
\mathbb{D}\left(S_1, S_2 ; \dd\right)=\sup _{x \in S_1} \dd\left(x, S_2\right)=\sup _{x \in S_1} \inf _{y \in S_2} \dd(x, y)
$$
and $\mathbb{H}\left(S_1, S_2 ;\dd\right)$ for the Hausdorff distance between the two nonempty compact sets, that is,
$$
\mathbb{H}\left(S_1, S_2 ; \dd\right)=\max \left\{\mathbb{D}\left(S_1, S_2 ; \dd\right), \mathbb{D}\left(S_2, S_1 ; \dd\right)\right\} .
$$

\section{Episodic Bayesian DROC Model}
In this section, we present the episodic Bayesian DROC framework \eqref{eq:BDR-Bellman}, construct a Bayesian ambiguity set based on a posterior credible region, and establish a mean-risk reformulation
that connects the episodic Bayesian DROC objective to a risk-averse SOC formulation. Since many practical MDP/SOC implementations rely on finite or discretized representations of uncertainty \cite{wang2023bayesian}, we adopt a finite support set for the associated random vectors
(see, e.g., \cite{carpentier2012dynamic,pfeiffer2018two}). Motivated by this and for clarity of exposition, we make the following basic assumption.

\begin{assumption}
\label{ass-finite-support}
 Assume that \(\Xi:=\{\xi^{1},\ldots,\xi^{J}\}\subseteq\mathbb{R}^k\) is a finite, known support set with $J\in \mathbb{N}_+$.
\end{assumption}

Under Assumption \ref{ass-finite-support}, each distribution $P\in\mathscr{P}(\Xi)$ can be represented by the probability vector $(p_j)_{j=1}^J:=(p_1,\ldots,p_J)$ with \(p_{j} = {P}(\xi = \xi^{j})\) for \(j = 1, \ldots, J\). 
With a slight abuse of notation for brevity, we use $P$ to denote both the distribution and its probability vector, as they are in one-to-one correspondence.
Hence, under this identification, we may write $\mathscr{P}(\Xi):=\{P\in[0,1]^J:\sum_{j=1}^J p_j=1\}$.
When Bayesian learning is considered, we treat the unknown distribution as a random probability vector $P\in\mathscr P(\Xi)$ with a posterior distribution, which we will introduce in the next subsection.
To address the uncertainty in the true distribution $P^c$, we adopt an ambiguity set centered at a reference distribution. This approach is consistent with widely used DRO formulations for discrete distributions, which construct ambiguity sets around a reference distribution using distances such as $L_1$, $L_\infty$ norms \cite{jiang2018risk,huang2017study}, $\chi^2$-divergence \cite{philpott2018distributionally}, Burg entropy \cite{wang2016likelihood}, and total variation distance \cite{rahimian2022effective}. 
Specifically, following \cite{mehrotra2014models}, we consider the box-type ambiguity set
\begin{equation}\label{eq-ambiguity-set}
		\mathcal{P}=\left\{P\in \mathscr{P}(\Xi):\hat{P}^l \leq P\leq \hat{P}^u\right\},
	\end{equation}
where $\hat{P}^l=\hat{P}-\hat{r}$ and $\hat{P}^u=\hat{P}+\hat{r}$ denote the componentwise lower and upper bounds associated with a reference distribution $\hat{P}=(\hat p_j)_{j=1}^J:=(\hat{p}_{1},\ldots,\hat{p}_{J})$ and tolerance vector $\hat{r}=(\hat r_j)_{j=1}^J:=(\hat{{r}}_{1},\ldots,\hat{{r}}_{J})$. Notably, when $\hat{r}_1=\ldots=\hat{r}_J$, the box-type ambiguity set \eqref{eq-ambiguity-set} reduces to an $L_\infty$ norm set. 
In what follows, we show how to construct and update the ambiguity set $\mathcal{P}_{\mu_N}$ of the form \eqref{eq-ambiguity-set} using the posterior $\mu_N$.

Assumption \ref{ass-finite-support} plays a critical role in both the formulation of the episodic Bayesian DROC model and the subsequent analysis. When $\xi$ is continuously distributed, one common approach is to discretize it using optimal quantization \cite{pflug2014multistage}. This involves partitioning the support set $\Xi$ into mutually exclusive regions via Voronoi tessellation \cite{li2024discretization}, and then 
using the corresponding cell centers as support points for a discrete approximation. Under suitable conditions, it is possible to derive bounds on the resulting approximation errors; see \cite{guo2019distributionally} for results pertaining to one-stage stochastic programs. 
It should be noted that this approach is generally applicable only when the support set is bounded\footnote{In cases where the support set is unbounded, one may truncate the tails under suitable tightness conditions.}. A comprehensive investigation of these discretization techniques and the associated error analysis is beyond the scope of this paper and is left for future research.

\subsection{Episodic Bayesian ambiguity sets}

We now proceed to discuss the construction of an adaptive reference distribution and the corresponding tolerance within the ambiguity set \eqref{eq-ambiguity-set}, guided by Bayesian learning principles.
From a Bayesian viewpoint, the unknown \(P^c\) is treated as a random distribution, initially characterized by a prior distribution \(\mu_0\) in the absence of new observations. The prior reflects initial beliefs about \(P^c\). 
Here, we adopt the conjugate Dirichlet distribution as the prior. In the absence of prior information, different kinds of noninformative priors (see \cite{gelman1995bayesian})
can be used as special cases of the Dirichlet distribution.
Under this setting, the unknown $P^c$ has the prior density $\mu_0(P)=B(\tau_0)^{-1}\prod_{j=1}^{J}{p}_j^{\tau_{j,0}-1}$. Here, we set $\tau_{j,0}>1$ for $j=1,\ldots,J$ with $B(\tau_0)$ serving as a normalizing constant. 
Upon observing a data sequence \(\vec{\hat{\xi}}_N:=(\hat{\xi}_1,\ldots,\hat{\xi}_{N})\), beliefs about $P^c$ are updated episodically, resulting in the posterior distribution \(\mu_N(P)\). We assume that the observations are drawn i.i.d. from the true \(P^c\).
Under Bayesian inference, $\mu_N$ also evolves as a Dirichlet distribution with updated parameters $\tau_N:=(\tau_{j,N})_{j=1}^J$, defined by \begin{equation}\label{eq-update-tau}
 \tau_{j,N}=\tau_{j,0}+\sum_{i=1}^N\mathds{1}_{\{\hat{\xi}_i=\xi^j\}},
\end{equation}
where $\mathds{1}_{\{\hat{\xi}_i=\xi^j\}}$ equals $1$ if $\hat{\xi}_i=\xi^j$ and $0$ otherwise. Explicitly, the update of the posterior distribution is as follows:
\begin{equation}\label{posterior}
	\mu_N(P)
 :=\mu(P|\vec{\hat{\xi}}_N)
 =B(\tau_N)^{-1}\prod_{j=1}^{J}{p}_j^{\tau_{j,N}-1}.
\end{equation}
This posterior $\mu_N$ reflects the epistemic uncertainty about $P^c$ given the cumulative data $\vec{\hat{\xi}}_N$.
Based on this posterior belief $\mu_N$, we seek to construct an adaptive ambiguity set $\mathcal{P}_{\mu_N}$ in the form of \eqref{eq-ambiguity-set} with a prescribed posterior credibility. Specifically, following the standard Bayesian credible regions discussed in \cite{berger2013statistical,gelman1995bayesian}, given the dataset $\vec{\hat{\xi}}_N$ and a credible level $\alpha\in(0,1)$, we say that a posterior-based set $\mathcal{P}_{\mu_N}$ is a $(1-\alpha)$-posterior credible region for $P^c$ if
$$\mathbb{P}_{\mu_N}(P^c\in{\mathcal{P}_{\mu_N}})\geq1-\alpha,$$ where $\mathbb{P}_{\mu_N}$ denotes the posterior probability measure induced by $\mu_N$. 

{\color{black} It is important to distinguish a posterior credible region in the Bayesian setting from a classical confidence region in the frequentist setting. A confidence region evaluates the long-run coverage of a random set under repeated sampling, with $P^c$ treated as fixed but unknown.
In our framework, however, the ambiguity set is updated sequentially as new data are accumulated across episodes.
Accordingly, the relevant probabilistic notion here is posterior credibility conditional on the observed data, rather than repeated-sampling coverage.
While the data-generating distribution $P^c$ remains fixed but unknown, our uncertainty about it after observing $\vec{\hat{\xi}}_N$ is represented by the posterior distribution $\mu_N$.
For this reason, we 
formulate
the ambiguity set through posterior credibility rather than frequentist confidence.
For broader discussions on the differences and trade-offs between frequentist and Bayesian approaches, we refer the reader to \cite{efron2021computer,gupta2019near}.
}

This posterior-credibility-based perspective has been widely used in constructing ambiguity sets for DRO; see, e.g., \cite{chen2025data,li2022risk,li2025bayesian}. Following this line of literature, the next proposition provides a concrete construction of such an ambiguity set under our setting.
The proposed construction is computationally convenient and will be crucial to the mean-risk reformulation in Theorem~\ref{thm-reformulation}.
It is motivated by tractable coordinate-wise bounds used in \cite{li2020confidence,nilim2004robust}, together with a local normal approximation of the posterior around the posterior mode. To obtain a joint credibility level $1-\alpha$ across all $J$ coordinates, we apply a Bonferroni correction with $\alpha'=\alpha/J$.

\begin{proposition}[Approximate posterior credible region]\label{prop-ambiguity}
Fix $\alpha\in(0,1)$ and let $\alpha'=\alpha/J$. For $j=1,\ldots,J$, define
\begin{equation}
\hat{p}_{j,N}=\frac{\tau_{j,N}-1}{\sum_{k=1}^J\tau_{k,N}-J},\ 
\hat{r}_{j,N}:=z_{1-\frac{\alpha'}{2}}\sqrt{\frac{\hat{p}_{j,N}(1-\hat{p}_{j,N})}{\sum_{k=1}^J\tau_{k,N}-J}},
\end{equation}
where $\tau_{j,N}$ are the posterior parameters in \eqref{eq-update-tau} and $z_{1-\frac{\alpha'}{2}}$ is the $(1-\alpha'/2)$-quantile of the standard normal distribution.
Consider the box-type ambiguity set
\begin{equation}\label{ambiguity}
\mathcal{P}_{\mu_N}=\mathfrak{P}(\hat{P}_{N},\hat{r}_N):=\Big\{P\in \mathscr{P}(\Xi): \hat{p}_{j,N}-\hat{r}_{j,N}\leq p_j\leq \hat{p}_{j,N}+\hat{r}_{j,N},\ j=1,\ldots,J\Big\}.
\end{equation}
Then $\mathcal{P}_{\mu_N}$ is an approximate $(1-\alpha)$ posterior credible region for $P^c$. That is, 
\begin{equation}
 \mathbb{P}_{\mu_N}(P^c\in{\mathfrak{P}(\hat{P}_{N},\hat{r}_N)})\geq1-\alpha+o(1),
\end{equation}
 where $o(1)$ denotes a quantity that vanishes as $N\to\infty$.
\end{proposition}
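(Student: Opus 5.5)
Under the posterior $\mu_N$, the random vector $P=(p_1,\ldots,p_J)$ is $\mathrm{Dirichlet}(\tau_N)$, so each marginal satisfies $p_j\sim\mathrm{Beta}(\tau_{j,N},\,T_N-\tau_{j,N})$ with $T_N:=\sum_{k}\tau_{k,N}$. The plan has three steps: establish a coordinatewise normal approximation of each Beta marginal centred at the posterior mode $\hat p_{j,N}$; deduce coordinatewise credibility $1-\alpha'$ for each interval $[\hat p_{j,N}-\hat r_{j,N},\hat p_{j,N}+\hat r_{j,N}]$; and combine the $J$ coordinates by the Bonferroni (union) bound. Because $\mathfrak{P}(\hat P_N,\hat r_N)$ intersects the box with the simplex $\mathscr P(\Xi)$, and a Dirichlet draw lies in the simplex almost surely, the event $\{P\in\mathfrak P(\hat P_N,\hat r_N)\}$ coincides $\mu_N$-a.s. with the event that all $J$ coordinate constraints hold.

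\textbf{Step 1: normal approximation.} Put $n_N:=T_N-J$, so that $\hat p_{j,N}=(\tau_{j,N}-1)/n_N$. Since $T_N=T_0+N\to\infty$, the Beta marginal has
\begin{equation*}
\mathbb E_{\mu_N}[p_j]=\frac{\tau_{j,N}}{T_N}=\hat p_{j,N}+O(1/N),\qquad \mathrm{Var}_{\mu_N}(p_j)=\frac{\tau_{j,N}(T_N-\tau_{j,N})}{T_N^2(T_N+1)}=\frac{\hat p_{j,N}(1-\hat p_{j,N})}{n_N}\bigl(1+o(1)\bigr),
\end{equation*}
where the variance approximation holds whenever $\hat p_{j,N}$ stays away from $0$ and $1$. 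A Bernstein--von Mises/Laplace argument, or directly the classical CLT for Beta distributions with both shape parameters tending to infinity, then gives
\begin{equation*}
\sup_{x\in\mathbb R}\Bigl|\mathbb P_{\mu_N}\Bigl(\frac{p_j-\hat p_{j,N}}{\sqrt{\hat p_{j,N}(1-\hat p_{j,N})/n_N}}\le x\Bigr)-\Phi(x)\Bigr|\to 0 .
\end{equation*}
The centring shift of order $O(1/N)$ is negligible against the scale $N^{-1/2}$, and Slutsky's lemma absorbs the variance ratio tending to $1$. Consequently $\mathbb P_{\mu_N}(|p_j-\hat p_{j,N}|\le\hat r_{j,N})=1-\alpha'+o(1)$ for each $j$.

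\textbf{Step 2: union bound.} Using $\mathbb P(\cap_j A_j)\ge 1-\sum_j\mathbb P(A_j^c)$ and $J\alpha'=\alpha$,
\begin{equation*}
\mathbb P_{\mu_N}\bigl(P\in\mathfrak P(\hat P_N,\hat r_N)\bigr)\ge 1-J\bigl(\alpha'+o(1)\bigr)=1-\alpha+o(1),
\end{equation*}
since $J$ is fixed. Here $P^c$ in the statement is read, as in the paper's Bayesian convention, as the posterior random vector.

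\textbf{Main obstacle.} The hard step is making Step 1 uniform over the data when some coordinate has $\tau_{j,N}$ not growing, i.e.\ $\hat p_{j,N}\to0$. This occurs when $p^c_j=0$, or along unlucky sample paths. In that regime the normal approximation to the Beta marginal fails, and the interval can even degenerate; note $\tau_{j,0}>1$ ensures $\hat p_{j,N}>0$, hence $\hat r_{j,N}>0$.

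I would first handle this almost surely under the i.i.d. sampling from $P^c$. If $p^c_j>0$, the strong law gives $\tau_{j,N}/N\to p^c_j$, so both Beta shape parameters diverge and the CLT applies. If $p^c_j=0$, the coordinate $\tau_{j,N}=\tau_{j,0}$ stays bounded. In that case I would either restrict the claim to $P^c$ in the relative interior of the simplex (the natural reading of the $o(1)$ statement), or bound that coordinate's exceedance probability directly, using the explicit Beta tail with fixed first parameter and second parameter of order $N$. That coordinate would then be treated separately rather than through the normal approximation.

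Finally, I would use a Berry--Esseen-type rate for Beta marginals to confirm that each coordinate's $o(1)$ error is indeed uniform in $x$.
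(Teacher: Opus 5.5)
Your argument follows the paper's route. The paper gives no formal proof; its only justification is that the box comes from a local normal approximation of the Dirichlet posterior around its mode plus a Bonferroni correction $\alpha'=\alpha/J$, and your Beta-marginal CLT with a union bound makes exactly this rigorous, almost surely, when every $p^c_j>0$.

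One caution on your boundary discussion: the ``direct Beta tail'' alternative cannot rescue a coordinate with $p^c_j=0$. There $\tau_{j,N}\equiv\tau_{j,0}$, and the coordinatewise posterior coverage converges to $\mathbb{P}\bigl(|G-(\tau_{j,0}-1)|\le z_{1-\alpha'/2}\sqrt{\tau_{j,0}-1}\bigr)$ with $G\sim\Gamma(\tau_{j,0},1)$. This limit tends to $0$ as $\tau_{j,0}\downarrow 1$, so the claim genuinely needs $P^c$ in the relative interior of the simplex, and your first option (restricting to that case) is the one to take.
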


\begin{remark}\label{rem-1}
Let $\hat{P}_0=(\frac{\tau_{j,0}-1}{\sum_{k=1}^{J}\tau_{k,0}-J})_{j=1}^J$ denote the prior-only reference distribution (Dirichlet mode) and $\hat{P}_N^\mathrm{E}:=(\frac{\sum_{i=1}^N\mathds{1}_{\{\hat{\xi}_i=\xi^j\}}}{N})_{j=1}^J$ the data-only empirical distribution. Then the reference distribution $\hat{P}_N$ proposed in Proposition \ref{prop-ambiguity}, as the center of the Bayesian ambiguity set, admits the convex decomposition:
$$\hat{P}_N=\omega_N\hat{P}_0+(1-\omega_N)\hat{P}_N^\mathrm{E},$$ 
where $\omega_N=\frac{\sum_{k=1}^{J}\tau_{k,0}-J}{\sum_{k=1}^{J}\tau_{k,N}-J}$ and $\omega_N\downarrow0$ as $N\to\infty$. 
This representation makes explicit that $\hat{P}_N$ is a Bayesian adjusted estimator that blends the decision-maker’s prior $\tau_0$ with the data $\vec{\hat{\xi}}_N$, and that the influence of the prior decays as more data are accumulated.
Prior beliefs can be elicited from historical data, expert judgment or domain knowledge, allowing the ambiguity set to reflect credible information beyond the sample. 
Compared with purely data-driven approaches, such adaptability of the proposed Bayesian ambiguity set helps improve robustness by incorporating prior information in addition to sample frequencies.
\end{remark}

\begin{remark}[Connection to episodic Bayesian SOC]\label{rem-2}
Under Assumption~\ref{ass-finite-support}, the mapping 
$P\mapsto \mathbb{E}_{P}[Z(\xi)]$ is affine in the probability vector $P$ for any integrable function $Z(\xi)$.
Thus, the Bayesian-average objective satisfies
\[
\mathbb{E}_{\mu_N}\mathbb{E}_{P}\left[\mathcal{C}(s,a, \xi) + \gamma V_{N}(g(s, a, \xi))\right]
=
\mathbb{E}_{\bar P_N}\left[\mathcal{C}(s,a, \xi) + \gamma V_{N}(g(s, a, \xi))\right],
\]
where $\bar P_N:=\mathbb{E}_{\mu_N}[P]=\Big(\frac{\tau_{j,N}}{\sum_{k=1}^J\tau_{k,N}}\Big)_{j=1}^J$ is the Dirichlet posterior mean (which also coincides with the posterior predictive distribution of $\xi$ \cite{gelman1995bayesian}). 
Moreover, if we choose the center of the ambiguity set as $\bar P_N$ and let the credibility level degenerate by setting $\alpha=1$ (i.e., $1-\alpha=0$), 
then one may take the corresponding posterior credible region to be the singleton set $\{\bar P_N\}$, namely, the box radius collapses to $0$.
In this degenerate case, the distributionally robust Bellman equation \eqref{eq:BDR-Bellman} reduces to
\[
V_N(s)=\inf_{a\in\mathcal A}\mathbb{E}_{\bar P_N}\left[\mathcal{C}(s,a,\xi)+\gamma V_N\big(g(s,a,\xi)\big)\right],
\]
which coincides with the episodic Bayesian Bellman equation \eqref{BE} in \cite{shapiro2025episodic}.
Furthermore, 
the center $\hat P_N$ is the Dirichlet posterior \emph{mode} under parameters $\tau_N$, but it can also be interpreted as the Dirichlet posterior \emph{mean}
under shifted parameters $(\tau_N- 1)$:
\[
\hat p_{j,N}=\frac{\tau_{j,N}-1}{\sum_{k=1}^J\tau_{k,N}-J}
=\frac{\tilde\tau_{j,N}}{\sum_{k=1}^J\tilde\tau_{k,N}},
\qquad \tilde\tau_{j,N}:=\tau_{j,N}-1.
\]
Therefore, the episodic Bayesian SOC model in \cite{shapiro2025episodic} with prior
$\tilde\tau_0=\tau_0- 1$ coincides with our episodic Bayesian DROC model with prior
$\tau_0$ in the degenerate case $\alpha=1$.
\end{remark}

The following result illustrates that as the number of episodes $N$ grows, the Bayesian posterior of $P$ becomes increasingly concentrated around $P^c$. Let $\|\cdot\|$ denote the Euclidean norm. The convergence is in the almost surely (a.s.) sense with respect to the data-generating distribution, i.e., for almost every sequence \(\{\hat{\xi}_1, \hat{\xi}_2,\ldots \}\).

\begin{lemma}[Bayesian consistency]\label{lem-bc}
The reference distribution \(\hat{P}_N\) a.s. converges to \(P^c\). Furthermore, for any \(\epsilon > 0\), 
\begin{equation}\label{eq-bc}
		\lim_{N \to \infty} \int_{\|P - P^c\| \leq \epsilon} \mu(P|\vec{\hat{\xi}}_N) dP = 1,\ a.s.
\end{equation}
\end{lemma}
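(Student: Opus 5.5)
The plan is to handle both claims directly through the explicit Dirichlet structure of $\mu_N$ and the strong law of large numbers. The prior fixes $\tau_0$, so no general posterior-consistency theorem (e.g.\ Doob or Schwartz) is needed. Write $n_{j,N}:=\sum_{i=1}^N\mathds{1}_{\{\hat\xi_i=\xi^j\}}$. Since the $\hat\xi_i$ are i.i.d.\ from $P^c$, the strong law gives $n_{j,N}/N\to p^c_j$ a.s.\ for each $j$. Intersecting these $J$ probability-one events yields a single full-measure set of sample paths, and we fix a path in it.

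\emph{Step 1: the center.} Using Remark~\ref{rem-1}, $\hat P_N=\omega_N\hat P_0+(1-\omega_N)\hat P_N^{\mathrm E}$. Here $\omega_N=(\sum_k\tau_{k,0}-J)/(\sum_k\tau_{k,0}-J+N)\to0$ deterministically, and $\hat P_N^{\mathrm E}\to P^c$ on the fixed path. Since $\hat P_0$ is bounded, $\hat P_N\to P^c$ a.s. The same argument shows that the posterior mean $\bar P_N$ also converges to $P^c$ a.s., and this is the fact used in Step 2.

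\emph{Step 2: posterior concentration.} Under $\mu_N=\mathrm{Dir}(\tau_N)$ with $T_N:=\sum_k\tau_{k,N}=\sum_k\tau_{k,0}+N$, the posterior moments are
$$\mathbb{E}_{\mu_N}[p_j]=\tau_{j,N}/T_N,\qquad \mathrm{Var}_{\mu_N}(p_j)=\frac{\tau_{j,N}(T_N-\tau_{j,N})}{T_N^2(T_N+1)}\le \frac{1}{4(T_N+1)}.$$
Hence
$$\mathbb{E}_{\mu_N}\|P-\bar P_N\|^2\le \frac{J}{4(T_N+1)}\to0$$
uniformly over all data paths. By Step 1 there is $N_0$ such that $\|\bar P_N-P^c\|\le\epsilon/2$ for all $N\ge N_0$. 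For such $N$, Chebyshev's inequality gives
$$\mu_N\big(\|P-P^c\|>\epsilon\big)\le\mu_N\big(\|P-\bar P_N\|>\epsilon/2\big)\le \frac{4}{\epsilon^2}\cdot\frac{J}{4(T_N+1)}\to0,$$
which is \eqref{eq-bc} along the fixed path, and therefore almost surely.

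\emph{Main obstacle.} The only real care needed is in the almost-sure quantifiers. The null set must not depend on $\epsilon$, which holds because it comes only from the $J$ strong-law events. Separately, the variance bound must be deterministic, so that all randomness enters only through $\bar P_N\to P^c$. With this bookkeeping the rest is routine. As a consistency check, the boundary case $p^c_j=0$ causes no difficulty: the variance bound does not require $p^c_j>0$, and the density is only used through its Dirichlet moments.
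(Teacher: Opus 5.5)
Your proposal is correct and follows essentially the same route as the paper. The strong law applied to the empirical frequencies gives $\hat P_N\to P^c$ and $\bar P_N\to P^c$ a.s., and Chebyshev's inequality with the explicit Dirichlet posterior variance then yields the posterior concentration in \eqref{eq-bc}. The only cosmetic difference is that the paper applies the inequality directly to $\mathbb{E}_{\mu_N}\|P-P^c\|^2=\mathrm{tr}(\Sigma_N)+\|\bar P_N-P^c\|^2$, whereas you center at $\bar P_N$, use an $\epsilon/2$ triangle-inequality step, and bound the variance uniformly over data paths.
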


\begin{proof}
	The first part of the claim follows from 
 the law of large numbers, that is, $\frac{\sum_{i=1}^N\mathds{1}_{\{\hat{\xi}_i=\xi^j\}}}{N} \to p_{j}^c$ 
 as $N\to\infty$ almost surely
 for all $j=1,\ldots,J$.
 Thus $\hat{p}_{j,N} \to p_j^{c}$ and $\hat{P}_N\to P^c$ almost surely.	
	For the second part, 
 let \(\bar P_N:=\mathbb E_{\mu_N}[P]\) and \(\Sigma_N:=\mathrm{Cov}_{\mu_N}(P)\).
By Chebyshev's inequality,
\begin{equation}\label{bayescon}
	\int_{\|P-P^c\|\le\epsilon}\mu(P|\vec{\hat\xi}_N)dP\ge 1-\frac{\mathrm{tr}(\Sigma_N)+\|\bar P_N-P^c\|^2}{\epsilon^2}.
	\end{equation}
According to the Dirichlet posterior \eqref{posterior}, we have
$$
	\bar p_{j,N}=\frac{\tau_{j,0}+\sum_{i=1}^N\mathds{1}_{\{\hat{\xi}_i=\xi^j\}}}{\sum_{k=1}^J\tau_{k,0}+N}, \ \mathrm{tr}(\Sigma_N)=\sum_{j=1}^J\frac{(\tau_{j,0}+\sum_{i=1}^N\mathds{1}_{\{\hat{\xi}_i=\xi^j\}})(\sum_{k=1}^J\tau_{k,0}+N-\tau_{j,0}-\sum_{i=1}^N\mathds{1}_{\{\hat{\xi}_i=\xi^j\}})}{(\sum_{k=1}^J\tau_{k,0}+N)^2(\sum_{k=1}^J\tau_{k,0}+N+1)} .
	$$
Consequently, by the law of large numbers, it follows that $\mathrm{tr}(\Sigma_N)\to0$ and $\bar p_{j,N} \to p_j^{c}$ almost surely as $N\to\infty$. These facts and \eqref{bayescon} ensure the posterior consistency as stated in \eqref{eq-bc}.
\end{proof}

Lemma~\ref{lem-bc} establishes posterior consistency and the convergence of the Bayesian reference distribution $\hat P_N$ to $P^c$.
Since the ambiguity set is defined as a box around $\hat P_N$ with $\hat r_N$, we then consider the convergence for the ambiguity set $\mathcal P_{\mu_N}=\mathfrak P(\hat P_N,\hat r_N)$.
To this end, we use the Hausdorff metric to measure the distance between compact subsets of the probability simplex.

\begin{lemma}[Hausdorff Lipschitz continuity of Bayesian ambiguity sets]\label{lem-amb-cont}
Let $\mathcal U:=\{(P,r)\in\mathscr P(\Xi)\times\mathbb R_+^J:\mathfrak P(P,r)\neq\emptyset\}$.
Then for any $(\hat P_N,\hat r_N),(\hat P'_N,\hat r'_N)\in\mathcal U$,
\bgeqn 
\mathbb H\left(\mathfrak P(\hat P_N,\hat r_N),\mathfrak P(\hat P'_N,\hat r'_N);\|\cdot\|_\infty\right)
\le\|\hat P_N-\hat P'_N\|_\infty+\|\hat r_N-\hat r'_N\|_\infty.
\edeqn 
In particular, under the Bayesian construction in Proposition~\ref{prop-ambiguity},
\[
\mathbb H\left(\mathfrak P(\hat P_N,\hat r_N),\mathfrak P(P^c,0);\|\cdot\|_\infty\right)\to0
\quad \text{a.s.}
\]
\end{lemma}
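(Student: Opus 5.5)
The plan is to prove the Hausdorff bound by a direct projection-and-repair construction. Write $\delta:=\|\hat P_N-\hat P'_N\|_\infty+\|\hat r_N-\hat r'_N\|_\infty$. Let $l=\hat P_N-\hat r_N$, $u=\hat P_N+\hat r_N$, and define $l',u'$ analogously from $(\hat P'_N,\hat r'_N)$. Then $\|l-l'\|_\infty\le\delta$ and $\|u-u'\|_\infty\le\delta$. Nonnegativity is part of $\mathscr P(\Xi)$, so I replace the bounds by $\max(l,0)$, $\min(u,1)$ and likewise for the primed ones. This truncation is $1$-Lipschitz, so the inequalities $\|l-l'\|_\infty,\|u-u'\|_\infty\le\delta$ survive it. By symmetry it suffices to bound the excess $\mathbb D(\mathfrak P(\hat P_N,\hat r_N),\mathfrak P(\hat P'_N,\hat r'_N);\|\cdot\|_\infty)\le\delta$. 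That is, for each $Q$ with $l\le Q\le u$ and $\sum_jQ_j=1$ I must find $Q'$ with $l'\le Q'\le u'$, $\sum_jQ'_j=1$ and $\|Q-Q'\|_\infty\le\delta$.

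To do this I intersect the target box with the $\delta$-ball around $Q$. Set $a:=\max(l',Q-\delta)$ and $b:=\min(u',Q+\delta)$ componentwise. Then $a\le b$, because $l'\le u'$ (the target set is nonempty), $Q-\delta\le u'$ (from $Q\le u\le u'+\delta$), $l'\le Q+\delta$, and trivially $Q-\delta\le Q+\delta$. A box $[a,b]$ meets the hyperplane $\{\sum_j x_j=1\}$ if and only if $\sum_ja_j\le 1\le\sum_jb_j$. By the intermediate value theorem along the segment from $a$ to $b$, this would give the desired $Q'$.

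I expect verifying $\sum_j a_j\le1$ to be the main obstacle; the bound $\sum_j b_j\ge 1$ is symmetric. Let $S:=\{j: l'_j> Q_j-\delta\}$. Then $\sum_j a_j=\sum_{j\in S}l'_j+\sum_{j\notin S}(Q_j-\delta)$. I would use nonemptiness of the target set: pick $R\in\mathfrak P(\hat P'_N,\hat r'_N)$, so $\sum_{j\in S} l'_j\le \sum_{j\in S}R_j$. I would then try to compare $\sum_{j\notin S}(Q_j-\delta)$ with $\sum_{j\notin S}R_j$ using $\sum_jQ_j=\sum_jR_j=1$. If this inequality fails in general, the fallback is a Hoffman-type error bound for the polytope $\{x:l'\le x\le u',\ \mathbf 1^\top x=1\}$. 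Its residual at $Q$ is at most $\delta$, and the constraint matrix has a simple structure, so the Hoffman constant can be computed explicitly. This may yield the stated bound, or else a dimension-dependent constant, which would still suffice for the convergence part.

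For the "in particular" part, note that $\mathfrak P(P^c,0)=\{P^c\}\ne\emptyset$, so $(P^c,0)\in\mathcal U$; also $(\hat P_N,\hat r_N)\in\mathcal U$, since $\hat P_N$ itself lies in the box. Applying the bound gives $\mathbb H\le\|\hat P_N-P^c\|_\infty+\|\hat r_N\|_\infty$. The first term tends to $0$ a.s. by Lemma~\ref{lem-bc}. For the second, $\hat p_{j,N}(1-\hat p_{j,N})\le 1/4$ and $\sum_k\tau_{k,N}-J=\sum_k\tau_{k,0}-J+N\to\infty$, so $\hat r_{j,N}\le z_{1-\alpha'/2}/(2\sqrt{\sum_k\tau_{k,0}-J+N})\to0$ deterministically, which completes the argument.
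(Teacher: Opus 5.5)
Your reduction is sound up to the step you flag yourself, but that step cannot be closed, because the first display of the lemma is false for $J\ge 3$.

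\textbf{Counterexample.} Take $J=3$, $P=P'=(\tfrac13,\tfrac13,\tfrac13)$, $r=(\delta,\delta,2\delta)$ and $r'=(0,0,2\delta)$ with $\delta$ small, so that $\|P-P'\|_\infty+\|r-r'\|_\infty=\delta$.
\begin{itemize}
\item In $\mathfrak P(P',r')$ the constraints $x_1=x_2=\tfrac13$ together with $\sum_j x_j=1$ force $\mathfrak P(P',r')=\{P\}$.
\item The point $Q=(\tfrac13-\delta,\tfrac13-\delta,\tfrac13+2\delta)$ lies in $\mathfrak P(P,r)$, and $\|Q-P\|_\infty=2\delta$.
\item In your notation, $a=\max(l',Q-\delta)=(\tfrac13,\tfrac13,\tfrac13+\delta)$, so $\sum_j a_j=1+\delta$. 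The repair box therefore misses the hyperplane exactly where you feared.
\end{itemize}
The same pattern with $r=(\delta,\dots,\delta,(J-1)\delta)$ and $r'=(0,\dots,0,(J-1)\delta)$ gives Hausdorff distance at least $(J-1)\delta$. Raising $J-1$ coordinates by $\delta$ each must be paid for by lowering the remaining one by $(J-1)\delta$. So the best constant is at least $J-1$. Your Hoffman-type fallback is the right repair, but it can only yield a dimension-dependent constant, not $1$.

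The paper's H\"ormander-formula proof has the same hole. Its first step asserts $\mathbb H(\mathfrak P(\hat P_N,\hat r_N),\mathfrak P(\hat P_N,\hat r'_N);\|\cdot\|_\infty)\le\|\hat r_N-\hat r'_N\|_\infty$ for two boxes with a common center, which is precisely what the example violates. Its center-shift step also treats the set as a rigid translate, ignoring the truncation at $0$ and $1$.

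\textbf{The ``in particular'' part.} This claim is true, and your ingredients for it are right, but you should not route it through the general bound. Since $\mathfrak P(P^c,0)=\{P^c\}$ and $\hat P_N\in\mathfrak P(\hat P_N,\hat r_N)$, the triangle inequality $\|Q-P^c\|_\infty\le\|Q-\hat P_N\|_\infty+\|\hat P_N-P^c\|_\infty$ gives directly
\[
\mathbb H\big(\mathfrak P(\hat P_N,\hat r_N),\{P^c\};\|\cdot\|_\infty\big)\le\|\hat r_N\|_\infty+\|\hat P_N-P^c\|_\infty .
\]
Your deterministic bound $\hat r_{j,N}\le z_{1-\alpha'/2}/\big(2\sqrt{\textstyle\sum_k\tau_{k,0}-J+N}\big)$ and Lemma~\ref{lem-bc} then finish the argument. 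Likewise, the qualitative Hausdorff continuity used later (e.g.\ in Theorem~\ref{thm-bc-2}) survives with a dimension-dependent Lipschitz constant. Only the explicit constants built on the claimed factor $1$ would need to be enlarged.
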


\begin{proof}
Let $s(y,C)$ denote the support function of a compact set $C$. By H\"ormander formula (see e.g.,
Theorem II.~18 of \cite{castaing1977convex}),
\bgeq 
\mathbb H\left(\mathfrak P(\hat P_N,\hat r_N),\mathfrak P(\hat P'_N,\hat r'_N);\|\cdot\|_\infty\right)
=\sup_{\|y\|_1\leq 1} \left|s(y,\mathfrak P(\hat P_N,\hat r_N))- s(y,\mathfrak P(\hat P'_N,\hat r'_N))\right|. 
\edeq
Thus
\bgeq 
\mathbb H\left(\mathfrak P(\hat P_N,\hat r_N),\mathfrak P(\hat P'_N,\hat r'_N);\|\cdot\|_\infty\right)
&\leq& 
\mathbb H\left(\mathfrak P(\hat P_N,\hat r_N),\mathfrak P( \hat{P}_N,\hat r'_N);\|\cdot\|_\infty\right)
+ \mathbb H\left(\mathfrak P(\hat P_N,\hat r'_N),\mathfrak P(\hat P'_N,\hat r'_N);\|\cdot\|_\infty\right)\\
&\leq& \|\hat r_N-\hat r'_N\|_\infty +
\sup_{\|y\|_1\leq 1} \left|s(y,\mathfrak P(\hat P_N,\hat r'_N))- s(y,\mathfrak P(\hat P'_N,\hat r'_N))\right| \\
&=& \|\hat r_N-\hat r'_N\|_\infty +
\sup_{\|y\|_1\leq 1}\left| y^T(\hat{P}_N-\hat{P}'_N)\right|\\
&\leq & \|\hat r_N-\hat r'_N\|_\infty + \|\hat P_N-\hat P'_N\|_\infty.
\edeq
Finally, the convergence follows from Lemma~\ref{lem-bc} since $(\hat P_N,\hat r_N)\to(P^c,0)$ almost surely as $N\to\infty$.
\end{proof}

In light of the Bayesian credible region and Lemmas \ref{lem-bc} and \ref{lem-amb-cont}, as episode $N$ tends to infinity, $\mathcal{P}_{\mu_N}=\mathfrak P(\hat P_N,\hat r_N)$ almost surely converges to the singleton $\mathfrak P(P^c,0)=\{P^c\}$. This shows that Bayesian learning dynamically shrinks the ambiguity set to the true distribution as data accumulate.

\begin{remark}
In the preceding discussion, $\hat P_N$ and $\hat r_N$ denote the reference distribution and tolerance radius of the Bayesian ambiguity set constructed from the sample set $\vec{\hat\xi}_N$.
To clarify the functional dependence on the empirical data, we write $\hat P_N$ and $\hat r_N$ as functions of $\hat{P}_N^\mathrm{E}$ as follows:
\[
\hat p_{j,N}(\hat{P}_N^\mathrm{E}):=\frac{\tau_{j,0}-1+N \hat{p}_{j,N}^\mathrm{E}}{\sum_{k=1}^J \tau_{k,0}-J+N},
\ 
\hat r_{j,N}(\hat{P}_N^\mathrm{E}):=
z_{1-\frac{\alpha'}{2}}\sqrt{\frac{\hat{p}_{j,N}(\hat{P}_{N}^\mathrm{E})(1-\hat{p}_{j,N}(\hat{P}_{N}^\mathrm{E}))}{\sum_{k=1}^J\tau_{k,0}-J+N}}.
\]
This representation reveals that $\hat{P}_N$ is an affine function of $\hat{P}_N^\mathrm{E}$ (also shown in Remark \ref{rem-1}) and $\hat{r}_N$ is continuous in $\hat{P}_N^\mathrm{E}$.
Let $\Phi_N(\hat{P}_N^\mathrm{E}):=\mathfrak P(\hat P_N(\hat{P}_N^\mathrm{E}),\hat r_N(\hat{P}_N^\mathrm{E}))$. Then Lemma~\ref{lem-amb-cont} implies that $\Phi_N$ is Hausdorff continuous with respect to the $\|\cdot\|_\infty$ metric. Hence the ambiguity set $\mathcal{P}_{\mu_N}$ depends continuously on the empirical distribution of the observed data $\vec{\hat\xi}_N$.
\end{remark}

\subsection{Bellman equation and reformulation}
With the ambiguity set defined as in \eqref{ambiguity}, 
we can write the episodic Bayesian DROC Bellman equation
\eqref{eq:BDR-Bellman} as 
\begin{equation}\label{drmdp}
V_N(s)=\inf_{a\in\mathcal{A}}\sup_{{P}\in\mathfrak{P}(\hat{P}_{N},\hat{r}_N)}\mathbb{E}_{P}\left[\mathcal{C}(s,a,\xi)+\gamma V_N(g(s,a,\xi))\right].
\end{equation}
Let $\hat{V}^{*}_N$ denote the value function satisfying 
\eqref{drmdp}. The corresponding optimal policy can then be defined as:
\begin{equation}\label{drmdp-policy}
	\hat{\pi}_N^*(s) \in \arg\inf_{a \in \mathcal{A}} \sup_{{P}\in\mathfrak{P}(\hat{P}_{N},\hat{r}_N)}\mathbb{E}_{P}[\mathcal{C}(s,a,\xi) + \gamma \hat{V}_N^{*}(g(s,a,\xi))].
\end{equation}
The next theorem states that the distributionally robust counterpart \eqref{drmdp} can be equivalently reformulated as a mean-risk Bellman equation within the SOC framework. 
Specifically, we show that the reformulation takes the form of a convex combination of the expected cost and the Conditional Value at Risk (CVaR) of the cost. This result provides a risk-averse interpretation of the robust control formulation under the Bayesian ambiguity set \eqref{ambiguity}.

\begin{theorem}\label{thm-reformulation}
	Let ${P}^l_N$ and ${P}^{u-l}_N$ represent the probability measures induced by the lower and upper bounds $\hat{P}^l_N=\max\{0,\hat{P}_N-\hat{r}_N\}$ and $\hat{P}^u_N=\min\{1,\hat{P}_N+\hat{r}_N\}$ with respect to $\hat{P}_N$ and $\hat{r}_N$ as
\[p^l_{j,N}\equiv{P}^l_N(\xi= \xi^{j})=\frac{\hat{p}_{j,N}^l}{\mathbf{L}},\ p^{u-l}_{j,N}\equiv{P}^{{u-l}}_N(\xi= \xi^{j})=\frac{\hat{p}_{j,N}^u-\hat{p}_{j,N}^l}{\mathbf{U}-\mathbf{L}}\]
for \(j = 1, \ldots, J\), where
$\mathbf{L}=\sum_{k=1}^J\hat{p}_{k,N}^l\in(0,1)$ and $\mathbf{U}=\sum_{k=1}^J\hat{p}_{k,N}^u>1$.
Then, for any $s\in\mathcal{S}$ and $a\in\mathcal{A}$,
\[\begin{split}
	&
 \sup_{{P}\in\mathfrak{P}(\hat{P}_{N},\hat{r}_N)}\mathbb{E}_{P}\left[\mathcal{C}(s,a,\xi)+\gamma V_N(g(s,a,\xi))\right]\\&=\mathbf{L}\mathbb{E}_{{P}^l_N}\left[\mathcal{C}(s,a,\xi)+\gamma V_N(g(s,a,\xi))\right]+(1-\mathbf{L}){\rm{CVaR}}^{(\mathbf{U}-1)/(\mathbf{U}-\mathbf{L})}_{{P}_N^{u-l}}\left[\mathcal{C}(s,a,\xi)+\gamma V_N(g(s,a,\xi))\right].
\end{split}\]
\end{theorem}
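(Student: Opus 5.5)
Fix $(s,a)$ and write $Z_j:=\mathcal{C}(s,a,\xi^j)+\gamma V_N(g(s,a,\xi^j))$, $j=1,\ldots,J$. The left-hand side is then the value of the linear program
\[
\max_{p}\ \sum_{j=1}^J p_jZ_j\quad\text{s.t.}\quad \hat p^l_{j,N}\le p_j\le \hat p^u_{j,N},\ \ \sum_{j=1}^J p_j=1 .
\]
Intersecting with $[0,1]$ does not change the feasible set, since $P\in\mathscr P(\Xi)$ already forces $0\le p_j\le1$. This is why the clipped bounds $\hat P^l_N,\hat P^u_N$ may be used in place of $\hat P_N\mp\hat r_N$.

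\textbf{Step 1: shift out the lower bound.} Substitute $p_j=\hat p^l_{j,N}+q_j$ with $0\le q_j\le \hat p^u_{j,N}-\hat p^l_{j,N}$ and $\sum_j q_j=1-\mathbf L$. The objective becomes
\[
\sum_j \hat p^l_{j,N}Z_j+\max_q\sum_j q_jZ_j=\mathbf L\,\mathbb E_{P^l_N}[Z]+\max_q\sum_j q_jZ_j .
\]
Next normalize by $\mathbf U-\mathbf L>0$, writing $q_j=(\mathbf U-\mathbf L)\,p^{u-l}_{j,N}\,\zeta_j$ with $\zeta_j\in[0,1]$. The remaining problem is
\[
(\mathbf U-\mathbf L)\max\Big\{\mathbb E_{P^{u-l}_N}[\zeta Z]:\ 0\le\zeta\le1,\ \mathbb E_{P^{u-l}_N}[\zeta]=\tfrac{1-\mathbf L}{\mathbf U-\mathbf L}\Big\}.
\]
Zero-mass atoms of $P^{u-l}_N$, i.e.\ indices with $\hat p^u=\hat p^l$, carry $q_j=0$ and can be ignored. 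The hypotheses $\mathbf L<1<\mathbf U$ guarantee $\beta:=\frac{1-\mathbf L}{\mathbf U-\mathbf L}\in(0,1)$, and they also make the feasible set nonempty.

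\textbf{Step 2: identify the inner problem with CVaR.} Use the dual (density) representation of CVaR on a finite space:
\[
\mathrm{CVaR}^{\kappa}_Q[Z]=\sup\Big\{\mathbb E_Q[\zeta Z]/(1-\kappa):\ 0\le\zeta\le1,\ \mathbb E_Q[\zeta]=1-\kappa\Big\}.
\]
Here $\kappa$ is the confidence level, so the density $\zeta/(1-\kappa)$ is bounded by $1/(1-\kappa)$. Take $Q=P^{u-l}_N$ and $1-\kappa=\beta$, which gives $\kappa=\frac{\mathbf U-1}{\mathbf U-\mathbf L}$. The inner maximum then equals $(\mathbf U-\mathbf L)\,\beta\,\mathrm{CVaR}^{\kappa}_{P^{u-l}_N}[Z]=(1-\mathbf L)\,\mathrm{CVaR}^{(\mathbf U-1)/(\mathbf U-\mathbf L)}_{P^{u-l}_N}[Z]$, which is exactly the claimed formula.

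For a self-contained argument, the dual representation can be replaced by a direct check. Order the $Z_j$ decreasingly and solve the fractional-knapsack LP greedily: fill $q_j$ to its upper bound on the largest $Z_j$ first until the mass $1-\mathbf L$ is exhausted. Then compare this value with the Rockafellar--Uryasev formula
\[
\mathrm{CVaR}^\kappa_Q[Z]=\min_t\Big\{t+\tfrac{1}{1-\kappa}\mathbb E_Q[(Z-t)_+]\Big\},
\]
using LP duality with multiplier $t$ on the equality constraint.

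\textbf{Main obstacle.} The delicate step is matching the CVaR level convention so that the level comes out as $(\mathbf U-1)/(\mathbf U-\mathbf L)$ rather than its complement. The other point needing care is handling atoms where the greedy allocation splits, i.e.\ where the VaR is attained at a point mass. Both are settled by the LP duality check above: the dual variable $t$ is exactly the VaR at that level, and complementary slackness accounts for a partially filled atom.
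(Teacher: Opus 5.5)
Your proof is correct, but your main argument runs on the primal side, whereas the paper works on the dual side. The paper takes the Lagrangian dual of the LP and eliminates the box multipliers $d^u_j,d^l_j$ coordinatewise. This leaves, in effect, $\inf_{t}\{t+\sum_j(Z_j-t)\hat p^l_{j,N}+\sum_j(Z_j-t)^+(\hat p^u_{j,N}-\hat p^l_{j,N})\}$, which after normalizing by $\mathbf L$ and $\mathbf U-\mathbf L$ is the Rockafellar--Uryasev formula. Your ``self-contained'' fallback (greedy allocation plus a multiplier $t$ on the equality constraint) is essentially that proof.

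Your primary route instead subtracts $\hat P^l_N$ and rescales by the slack $\hat p^u_{j,N}-\hat p^l_{j,N}=(\mathbf U-\mathbf L)p^{u-l}_{j,N}$. This maps the residual feasible set exactly onto the CVaR risk envelope $\{0\le\zeta\le 1,\ \mathbb E_{P^{u-l}_N}[\zeta]=1-\kappa\}$, so the identity reduces to a change of variables plus the density representation of CVaR.

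What your route buys is transparency:
\begin{itemize}
\item there is no sorting and no case split on the sign of $Z_j-t$;
\item partially filled atoms are handled automatically by fractional $\zeta_j$;
\item the level is read off directly, since $1-\kappa=(1-\mathbf L)/(\mathbf U-\mathbf L)$ is the fraction of the total slack that must be allocated;
\item a worst-case distribution $p^*_j=\hat p^l_{j,N}+\zeta^*_j(\hat p^u_{j,N}-\hat p^l_{j,N})$ comes out explicitly from an optimal $\zeta^*$.
\end{itemize}
What the paper's route buys is the minimization form with an auxiliary scalar, which is exactly what the BOCP master problem later uses.

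Each route takes one of the two standard CVaR representations as given, and these are LP duals of each other, so neither is more elementary in substance. Your remarks that clipping to $[0,1]$ leaves $\mathfrak P(\hat P_N,\hat r_N)$ unchanged, and that $\mathbf L<1<\mathbf U$ ensures nonemptiness and $\kappa\in(0,1)$, make explicit points the paper leaves implicit.
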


\begin{proof}
The result is similar to \cite[Theorem 5]{jiang2018risk} with $L_\infty$ norm ambiguity sets. For completeness, we provide a proof for the more general box-type ambiguity set considered here.
Without loss of generality, by relabeling the support points if necessary, we may assume that
$$\mathcal{C}(s,a,\xi^i)+\gamma V_N(g(s,a,\xi^i))\leq \mathcal{C}(s,a,\xi^j)+\gamma V_N(g(s,a,\xi^j))\text{ for all }i\leq j\text{ and }i,j \in\{1,\ldots,J\}.$$
Due to the finiteness of the support set, $\sup_{{P}\in\mathfrak{P}(\hat{P}_{N},\hat{r}_N)}\mathbb{E}_{P}\left[\mathcal{C}(s,a,\xi)+\gamma V_N(g(s,a,\xi))\right]$ is equal to the optimal value of the following optimization problem:
	\begin{equation}
		\begin{split}
			\sup_{P\in\mathbb{R}^J}&\sum_{j=1}^Jp_{j} h(\xi^j)\\
			\text{s.t.}\ & \sum_{j=1}^Jp_j=1, \ \hat{p}_{j,N}^l\leq p_j\leq \hat{p}_{j,N}^u,\ j=1,\ldots,J,
		\end{split}\label{3.4}
	\end{equation}
where $h(\xi^j):=\mathcal{C}(s,a,\xi^j)+\gamma V_N(g(s,a,\xi^j))$,
and
${P}\in\mathfrak{P}(\hat{P}_{N},\hat{r}_N)$ is described by the constraints in \eqref{3.4}. The Lagrangian dual problem corresponding to problem \eqref{3.4} is
	\[\inf_{d^u,d^l\geq0,d^0}\sup_{P\in\mathbb{R}^J}\sum_{j=1}^J
 h(\xi^j)
p_j+\sum_{j=1}^Jd^u_j(\hat{p}_{j,N}^u-p_j)+\sum_{j=1}^Jd_j^l(p_j-\hat{p}_{j,N}^l)+d^0\left(1-\sum_{j=1}^Jp_j\right),\]
	where $d^u\geq0$, $d^l\geq 0$ and $d^0$ are the dual variables corresponding to the upper and lower bounded constraints and the equality constraint, respectively. 
 Thus we have
	\begin{align*}	 			&\inf_{d^u,d^l\geq0,d^0}\sup_{P\in\mathbb{R}^J}\sum_{j=1}^Jh(\xi^j)p_j+\sum_{j=1}^Jd^u_j(\hat{p}_{j,N}^u-p_j)+\sum_{j=1}^Jd_j^l(p_j-\hat{p}_{j,N}^l)+d^0\left(1-\sum_{j=1}^Jp_j\right)\\
		=& \inf_{d^u,d^l\geq0,d^0}d^0+\sum_{j=1}^J(d^u_j\hat{p}_{j,N}^u-d_j^l\hat{p}_{j,N}^l)+\sum_{j=1}^J\sup_{p_j\in\mathbb{R}}\left(h(\xi^j)+d_j^l-d^u_j-d^0\right)p_j\\
		=&\inf_{d^u,d^l\geq0,d^0}d^0+\sum_{j=1}^J(d^u_j\hat{p}_{j,N}^u-d_j^l\hat{p}_{j,N}^l)\\
		&\text{s.t.}\ h(\xi^j)+d_j^l-d^u_j-d^0=0,\ j=1,\ldots,J	\\
		=&\inf_{d^0}d^0+	\sum_{j=1}^J\inf_{d^u_j,d_j^l\geq0}\left\{d^u_j\hat{p}_{j,N}^u-d_j^l\hat{p}_{j,N}^l\right\}\\
		&\text{s.t.}\ h(\xi^j)+d_j^l-d^u_j-d^0=0,\ j=1,\ldots,J	\\
		=&\inf_{d^0}d^0+	\sum_{j=1}^{j^*}(h(\xi^j)-d^0)\hat{p}_{j,N}^l+\sum_{j=j^*+1}^J (h(\xi^j)-d^0)\hat{p}_{j,N}^u\\
		=&\inf_{d^0}d^0+	\sum_{j=1}^J (h(\xi^j)-d^0)\hat{p}_{j,N}^l+\sum_{j=j^*+1}^J (h(\xi^j)-d^0)(\hat{p}_{j,N}^u-\hat{p}_{j,N}^l)\\
 =&\mathbf{L}\mathbb{E}_{{P}^l_N}\left[h(\xi)\right]+(1-\mathbf{L})\inf_{d^0}\left\{d^0+\frac{\mathbf{U}-\mathbf{L}}{1-\mathbf{L}}\mathbb{E}_{{P}_N^{u-l}}\left[(h(\xi)-d^0)^+\right]\right\}\\			
 =&\mathbf{L}\mathbb{E}_{{P}^l_N}[h(\xi)]+(1-\mathbf{L}){\rm{CVaR}}^{(\mathbf{U}-1)/(\mathbf{U}-\mathbf{L})}_{{P}_N^{u-l}}[h(\xi)],
	\end{align*}
 where $[x]^+ := \max\{x,0\}$, 
 $j^*:= \max\{j\in\{1,\ldots,J\} :h(\xi^j)\leq d^0 \}$ if $h(\xi^1)\leq d^0 $, and $j^*:= 0$ otherwise. The last equality follows from the standard minimization representation of CVaR in \cite{rockafellar2000optimization}.
\end{proof}

Theorem \ref{thm-reformulation} shows that the Bellman equation for episodic Bayesian DROC admits a risk-averse SOC interpretation in terms of a convex combination of an expectation (under ${P}^l_N$) and CVaR (under ${P}^{u-l}_N$).
Define the coherent risk measure $\rho_N(\cdot):=\lambda_N\mathbb{E}_{{P}^l_N}\left[\cdot\right]+(1-\lambda_N){\rm{CVaR}}^{\upsilon_N}_{{P}^{u-l}_N}\left[\cdot\right]$, where $\lambda_N=\mathbf{L}$ and $\upsilon_N=\frac{\mathbf{U}-1}{\mathbf{U}-\mathbf{L}}$. Then the Bayesian distributionally robust Bellman equation \eqref{drmdp} can be rewritten as
\begin{equation}\label{drmdp2}
	V_N(s)=\inf_{a\in\mathcal{A}}\sup_{{P}\in\mathfrak{P}(\hat{P}_{N},\hat{r}_N)}\mathbb{E}_{P}\left[\mathcal{C}(s,a,\xi)+\gamma V_N(g(s,a,\xi))\right]=\inf_{a\in\mathcal{A}}\rho_N\left(\mathcal{C}(s,a,\xi)+\gamma V_N(g(s,a,\xi))\right).
\end{equation}
Consequently, the corresponding optimal policy \eqref{drmdp-policy} can also be determined by
\begin{equation}
	\hat{\pi}_N^*(s) \in \arg\inf_{a \in \mathcal{A}} \rho_{N}[\mathcal{C}(s,a,\xi) + \gamma \hat{V}_N^{*}(g(s,a,\xi))].
\end{equation}

With the above reformulation, we now outline the solution approach for episodic Bayesian DROC.
We extend the episodic adaptation scheme in \cite{shapiro2025episodic} to our distributionally robust setting.
Since the Bayesian distributionally robust solution serves as an approximation of the true SOC model, we apply the policy $\hat{\pi}^*_{N}$ for the next single episode.
At the end of the episode, a new realization $\hat{\xi}_{N+1}$ is observed and aggregated into the dataset $\vec{\hat{\xi}}_{N+1}=(\vec{\hat{\xi}}_{N},\hat{\xi}_{N+1})$. 
This observation updates the posterior distribution to $\mu_{N+1}$, which in turn refines the ambiguity set parameters $\hat{P}_{N+1}$ and $\hat{r}_{N+1}$ for the subsequent episode. 
The policy is then dynamically redetermined by solving the Bellman equation \eqref{drmdp2} with the updated parameters. Repeating this process produces an adaptive policy sequence $\hat{\pi}^* = \{\hat{\pi}_1^*(\cdot), \hat{\pi}_2^*(\cdot), \ldots\}$.
At the outset of the initial episode, we assume access to historical data $\vec{\hat{\xi}}_0$. In scenarios where no such data exist, the algorithm instead initializes purely from the noninformative prior distribution. This procedure is summarized in Algorithm~\ref{alg:A}.
\begin{algorithm}[h]
	\caption{Episodic Bayesian DROC}
	\begin{algorithmic}[1]\label{alg:A}
		\STATE Input: initial state $s_0$; initial prior distribution $\mu_0(P)$ computed using a historical batch of data $\vec{\hat{\xi}}_0$.
		\FOR{Episode $N = 0,1, 2, \ldots$}
		\STATE Construct the Bayesian ambiguity set \eqref{ambiguity} corresponding to the current belief $\mu_{N}$.
		\STATE Solve the Bellman equation \eqref{drmdp2} to obtain value function $\hat{V}^{*}_N(\cdot)$ and the corresponding optimal policy $\hat{\pi}_N^*$. 
		\STATE Execute action $a_N:=\hat{\pi}_{N}^*(s_{N})$ and transition $s_{N+1} = g(s_{N},a_N, \hat{\xi}_{N+1})$.
		\STATE Observe the new realization $ \hat{\xi}_{N+1}$ and update the sample set $\vec{\hat{\xi}}_{N+1}=(\vec{\hat{\xi}}_{N},\hat{\xi}_{N+1})$ and the posterior distribution according to
\begin{equation}
\mu(P|\vec{\hat{\xi}}_{N+1})=B(\tau_{N+1})^{-1}\prod_{j=1}^{J}{p}_j^{\tau_{j,N+1}-1},
\end{equation}
where
\[
\tau_{j,N+1} =
\begin{cases}
	\tau_{j,N} & \text{if } \hat{\xi}_{N+1}\neq\xi^j, \\
	\tau_{j,N}+1 & \text{if } \hat{\xi}_{N+1}=\xi^j.
\end{cases}
\]
\STATE Treat the updated posterior $\mu(P|\vec{\hat{\xi}}_{N+1})$ as the new prior $\mu_{N+1}$ for the next episode.
		\ENDFOR
		\RETURN Optimal policy sequence $\hat{\pi}^* = \{\hat{\pi}_1^*(\cdot), \hat{\pi}_2^*(\cdot), \ldots\}$.
	\end{algorithmic}
\end{algorithm}
\begin{remark}
We implicitly assume unit-length episodes where each policy implementation corresponds to a single decision step, with each step yielding one observation. However, the framework can be readily extended to longer episodes: for any specified positive integer $K$, the selected policy remains fixed throughout a $K$-step episode. During such extended periods, $K$ observations are aggregated to update the posterior distribution, followed by a subsequent resolution of the Bellman equation under the revised uncertainty quantification.
\end{remark}

It is important to note that Algorithm~\ref{alg:A} is only a conceptual framework: solving \eqref{drmdp2} exactly in Step~4 is generally intractable, and the procedure is written as an open-ended loop without an explicit stopping criterion. In Section~5, we will propose both a practical algorithm for approximately solving the Bellman equation~\eqref{drmdp2} and an associated termination rule.

\section{Convergence Analysis}
In this section, we analyze how the sequence of value functions and optimal policies evolves episode by episode.
We establish asymptotic convergence as the number of episodes grows and provide finite-sample results for the proposed episodic Bayesian DROC approach, which are essential in practice since real-world datasets are typically finite.

\subsection{Weighted supremum norm setting for unbounded costs}
The classical contraction analysis for discounted SOC/MDPs is often developed on the space of bounded value functions equipped with the sup-norm $\|\cdot\|_\infty$.
However, when the state space is unbounded and the one-stage cost grows with the state, the optimal value function may fail to be bounded and $\|V\|_\infty$ can be infinite.
To accommodate such unbounded-cost settings, we work in a {\em weighted supremum norm space} defined as follows.

Let $w:\mathcal S\to[1,\infty)$ be a continuous weight function and define
\[
\mathscr{V}
(\mathcal S)
:=\Big\{V:\mathcal S\to\mathbb R\ \big|\ \|V\|_w<\infty\Big\},
\quad
\|V\|_w:=\sup_{s\in\mathcal S}\frac{|V(s)|}{w(s)}.
\]
Then $(\mathscr{V}(\mathcal S),\|\cdot\|_w)$ is a Banach space (see, e.g., \cite{bertsekas2022abstract}). When $w\equiv 1$, $\|\cdot\|_w$ reduces to the standard sup-norm $\|\cdot\|_\infty$, and the bounded-cost setting is recovered as a special case.
We impose the following standard assumption.

\begin{assumption}\label{ass1}
The action set $\mathcal A$ is compact.
For each $\xi\in\Xi$, $\mathcal C(s,a,\xi)$ and $g(s,a,\xi)$ are continuous in $(s,a)$.
Moreover, there exist constants $\bar{\mathcal C}_w>0$ and $\kappa_w\ge 0$ such that for all $(s,a,\xi)\in\mathcal S\times\mathcal A\times\Xi$,
\begin{equation}\label{eq:growth_cost}
|\mathcal C(s,a,\xi)|\le \bar{\mathcal C}_w w(s),
\end{equation}
and
\begin{equation}\label{eq:drift_w}
w\big(g(s,a,\xi)\big)\le \kappa_w w(s).
\end{equation}
Finally, the discount factor $\gamma$ satisfies
\begin{equation}\label{eq:gamma_kappa}
\gamma\kappa_w<1.
\end{equation}
\end{assumption}

Assumption~\ref{ass1} is standard for discounted Markov decision processes with unbounded costs under a weighted supremum norm.
Condition~\eqref{eq:growth_cost} controls the growth of the one-stage cost relative to the weight function $w(\cdot)$, while
\eqref{eq:drift_w} is a (pointwise) drift condition ensuring that the weighted supremum norm remains stable under transitions.
Together with \eqref{eq:gamma_kappa}, these conditions imply that the Bellman operators are contractions under $\|\cdot\|_w$.
This weighted supremum norm approach is classical in dynamic programming and Markov control with unbounded rewards/costs; see, e.g., the early development in \cite{wessels1977markov} and the monograph treatments in \cite{hernandez2012further,bertsekas2022abstract}.

\subsection{Asymptotic convergence of value function and optimal policy}
Recall that \eqref{drmdp-policy} in Section 2.2 assumes the existence of a solution $\hat{V}^*_N$ to the Bellman equation \eqref{drmdp}. However, the underlying rationale has not yet been fully established. In the following, we first demonstrate the existence and uniqueness of $\hat{V}^*_N$.

We define the Bellman operators \(\mathcal{L}\), \(\hat{\mathcal{L}}_N, \text{ and }\mathcal{L}^\pi :\mathscr{V}(\mathcal S) \to\mathscr{V}(\mathcal S)\) respectively as
\begin{equation}\label{eq-sa-true}
	\mathcal{L}(V)(s) := \inf_{a \in \mathcal{A}} \mathbb{E}_{P^c} \left[ \mathcal{C}(s,a, \xi) + \gamma V (g(s, a, \xi))\right], \ \forall V \in \mathscr{V}(\mathcal S),
\end{equation}
\begin{equation}\label{eq-dro-true}
	\hat{\mathcal{L}}_N(V)(s) := \inf_{a \in \mathcal{A}} \rho_{N} \left[ \mathcal{C}(s,a, \xi) + \gamma V(g(s, a, \xi)) \right], \ \forall V \in \mathscr{V}(\mathcal S),
\end{equation}
\begin{equation}\label{eq-pi}
	\mathcal{L}^\pi(V)(s) :=\mathbb{E}_{P^c} \left[ \mathcal{C}(s,\pi(s), \xi) + \gamma V (g(s, \pi(s), \xi))\right], \ \forall V \in \mathscr{V}(\mathcal S).
\end{equation}

We now show that the above Bellman operators are all contraction mappings in weighted supremum norm $\|\cdot\|_w$ with factor $\gamma_w:=\gamma \kappa_w$. For any function $h:\Xi\to\mathbb R$, define $\|h\|_{\xi,\infty}:=\max_{\xi\in\Xi}|h(\xi)|$. 

\begin{lemma}[Contraction of Bellman operators under $\|\cdot\|_w$]\label{lem-beo-3}
The operators $\mathcal L$, $\hat{\mathcal{L}}_N$, and $\mathcal{L}^\pi$ are $\gamma_w$-contractions under the norm $\|\cdot\|_{w}$.
That is, for any $V, V^{\prime} \in \mathscr{V}(\mathcal{S})$,
\[
\left\|\hat{\mathcal{L}}_N V-\hat{\mathcal{L}}_N V^{\prime}\right\|_{w} \leq \gamma_w\left\|V-V^{\prime}\right\|_{w},
\quad
\left\|\mathcal{L}^\pi V-\mathcal{L}^\pi V^{\prime}\right\|_{w} \leq \gamma_w\left\|V-V^{\prime}\right\|_{w},
\quad
\left\|\mathcal{L} V-\mathcal{L} V^{\prime}\right\|_{w} \leq \gamma_w\left\|V-V^{\prime}\right\|_{w}.
\]
\end{lemma}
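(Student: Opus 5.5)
The plan is to prove all three contraction inequalities at once from one fact: each operator has the form $V\mapsto \inf_{a\in\mathcal A}\Psi_{s,a}\big(\mathcal C(s,a,\cdot)+\gamma V(g(s,a,\cdot))\big)$ (or the same expression at the fixed action $a=\pi(s)$ for $\mathcal L^\pi$). Here $\Psi_{s,a}$ is either $\mathbb E_{P^c}$ or the coherent risk measure $\rho_N$ from \eqref{drmdp2}, and both are monotone and translation equivariant on functions $h:\Xi\to\mathbb R$. By Theorem~\ref{thm-reformulation}, $\rho_N(h)=\sup_{P\in\mathfrak P(\hat P_N,\hat r_N)}\mathbb E_P[h]$. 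This set is nonempty because $\hat P_N$ lies in it, so $\rho_N$ is a supremum of expectations over probability vectors. From this I get the key one-step estimate
\[
|\rho_N(h)-\rho_N(h')|\le \sup_{P}\big|\mathbb E_P[h-h']\big|\le \|h-h'\|_{\xi,\infty},
\]
and the same bound holds trivially for $\mathbb E_{P^c}$.

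Before using this, I need the operators to map $\mathscr V(\mathcal S)$ into itself. For $V\in\mathscr V(\mathcal S)$, the growth bound \eqref{eq:growth_cost} and the drift bound \eqref{eq:drift_w} give, for every $\xi$,
\[
|\mathcal C(s,a,\xi)+\gamma V(g(s,a,\xi))|\le \bar{\mathcal C}_w w(s)+\gamma\|V\|_w\kappa_w w(s).
\]
Since $\Psi_{s,a}$ is sandwiched between the min and max over the finite support $\Xi$, and the infimum over $\mathcal A$ preserves the same bound, I obtain $\|\mathcal T V\|_w\le \bar{\mathcal C}_w+\gamma_w\|V\|_w<\infty$ for each operator $\mathcal T$. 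Measurability is not needed, because $\mathscr V(\mathcal S)$ is defined only through the norm. Compactness of $\mathcal A$ and continuity are also not needed here; they would only matter for attainment of the infimum.

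For the contraction, fix $s$ and $a$, and set $h_V(\xi):=\mathcal C(s,a,\xi)+\gamma V(g(s,a,\xi))$. Then
\[
\|h_V-h_{V'}\|_{\xi,\infty}=\gamma\max_{\xi}|V(g(s,a,\xi))-V'(g(s,a,\xi))|\le\gamma\|V-V'\|_w\max_\xi w(g(s,a,\xi))\le \gamma\kappa_w w(s)\|V-V'\|_w .
\]
Combining this with the one-step estimate gives $|\Psi_{s,a}(h_V)-\Psi_{s,a}(h_{V'})|\le\gamma_w w(s)\|V-V'\|_w$ uniformly in $a$. Next I use the elementary inequality $|\inf_a f(a)-\inf_a f'(a)|\le\sup_a|f(a)-f'(a)|$, valid whenever both infima are finite, which the previous paragraph guarantees. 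This yields $|\mathcal T V(s)-\mathcal T V'(s)|\le\gamma_w w(s)\|V-V'\|_w$. Dividing by $w(s)$ and taking the supremum over $s$ gives the claim for $\mathcal L$ and $\hat{\mathcal L}_N$. For $\mathcal L^\pi$ no infimum is involved, so the bound follows directly at $a=\pi(s)$.

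I expect the only delicate point to be justifying the nonexpansiveness of $\rho_N$ in $\|\cdot\|_{\xi,\infty}$. I plan to use the dual representation from Theorem~\ref{thm-reformulation} rather than the explicit mean-CVaR formula, because it avoids fussing over $\mathbf L$, $\mathbf U$ and the clipping $\max\{0,\cdot\}$. The alternative is to argue from monotonicity plus translation equivariance: $h\le h'+c$ implies $\rho_N(h)\le\rho_N(h')+c$.
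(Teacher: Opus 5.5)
Your proposal is correct and follows essentially the same route as the paper. Both arguments combine the $1$-Lipschitz continuity of $\rho_N$ (and of $\mathbb E_{P^c}$) in $\|\cdot\|_{\xi,\infty}$, the drift bound $w(g(s,a,\xi))\le\kappa_w w(s)$, the inequality $|\inf_a f-\inf_a f'|\le\sup_a|f-f'|$, and division by $w(s)$. Your extra check that each operator maps $\mathscr V(\mathcal S)$ into itself is what the paper establishes separately in Lemma~\ref{lem:V_weight_bounded}.
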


\begin{proof}
For any given $V,V'\in\mathscr{V}(\mathcal S)$ and $s\in\mathcal S$, by definition,
\[
\begin{aligned}
 \big|(\hat{\mathcal L}_N V)(s)-(\hat{\mathcal L}_N V')(s)\big|
\le &\sup_{a\in\mathcal A}
\Big|
\rho_N\big(\mathcal C(s,a,\xi)+\gamma V(g(s,a,\xi))\big)
-\rho_N\big(\mathcal C(s,a,\xi)+\gamma V'(g(s,a,\xi))\big)
\Big|
\\
\le& \sup_{a\in\mathcal A,{\xi\in\Xi}}\gamma |V(g(s,a,\xi))-V'(g(s,a,\xi))|.
\end{aligned}
\]
Here the last inequality comes from the 1-Lipschitz continuity of $\rho_N (\cdot)$. For each $\xi\in\Xi$,
\[
|V(g(s,a,\xi))-V'(g(s,a,\xi))|
\le \|V-V'\|_w w(g(s,a,\xi))
\le \kappa_w\|V-V'\|_w w(s),
\]
where the last inequality follows from \eqref{eq:drift_w}. Hence
\[
\big|(\hat{\mathcal L}_N V)(s)-(\hat{\mathcal L}_N V')(s)\big|
\le \gamma_w\|V-V'\|_w w(s).
\]
Dividing by $w(s)$ and taking supremum over $s$ yields the contraction for $\hat{\mathcal L}_N$.
The proofs for $\mathcal{L}$ and $\mathcal L^\pi$ are similar with $\rho_N$ replaced by $\mathbb E_{P^c}$.
\end{proof}

Since \(\mathcal{L}\), \(\hat{\mathcal{L}}_N\) and $\mathcal{L}^\pi$ are $\gamma_w$-contractions, they admit unique fixed points \(V^*\), \(\hat{V}_N^*\) and $V^{\pi}$, respectively.

\begin{lemma}\label{lem:V_weight_bounded}
Under Assumption~\ref{ass1}, for any $V\in\mathscr{V}(\mathcal S)$,
\[
\|\hat{\mathcal L}_N V\|_w \le \bar{\mathcal C}_w + \gamma\kappa_w\|V\|_w,
\quad
\|\mathcal L V\|_w \le \bar{\mathcal C}_w + \gamma\kappa_w\|V\|_w,
\quad
\|\mathcal L^\pi V\|_w \le \bar{\mathcal C}_w + \gamma\kappa_w\|V\|_w.
\]
In particular, the fixed points satisfy
\[
\|\hat V_N^*\|_w \le \frac{\bar{\mathcal C}_w}{1-\gamma\kappa_w},
\quad
\|V^*\|_w \le \frac{\bar{\mathcal C}_w}{1-\gamma\kappa_w},
\quad
\|V^\pi\|_w \le \frac{\bar{\mathcal C}_w}{1-\gamma\kappa_w}.
\]
\end{lemma}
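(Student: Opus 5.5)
The plan is to bound the three operators pointwise in $s$, divide by $w(s)$, and take the supremum. Fix $V\in\mathscr V(\mathcal S)$, $s\in\mathcal S$ and $a\in\mathcal A$, and set
\[
h(\xi):=\mathcal C(s,a,\xi)+\gamma V(g(s,a,\xi)),\qquad \xi\in\Xi .
\]
Using \eqref{eq:growth_cost}, the definition of $\|\cdot\|_w$ and then \eqref{eq:drift_w}, we get for every $\xi\in\Xi$
\[
|h(\xi)|\le \bar{\mathcal C}_w w(s)+\gamma\|V\|_w\, w(g(s,a,\xi))\le \bigl(\bar{\mathcal C}_w+\gamma\kappa_w\|V\|_w\bigr)w(s).
\]
This gives a bound on $\|h\|_{\xi,\infty}$ that does not depend on $a$.

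The second step is to pass this bound through each functional. For $\mathbb E_{P^c}$ (in $\mathcal L$ and $\mathcal L^\pi$) the claim is immediate, since $|\mathbb E_{P^c}[h]|\le\|h\|_{\xi,\infty}$. For $\rho_N$, the point to check is that $|\rho_N(h)|\le\|h\|_{\xi,\infty}$. The cleanest route is through Theorem~\ref{thm-reformulation}: $\rho_N(h)$ equals $\sup_{P\in\mathfrak P(\hat P_N,\hat r_N)}\mathbb E_P[h]$. This is a supremum of expectations over probability vectors in the simplex, and each such expectation lies in $[-\|h\|_{\xi,\infty},\|h\|_{\xi,\infty}]$. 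The set is nonempty because it contains $\hat P_N$, so the supremum lies in the same interval. Alternatively, one can argue directly: $\rho_N$ is a coherent risk measure, hence monotone and translation-equivariant with $\rho_N(0)=0$, so $-\|h\|_{\xi,\infty}\le\rho_N(h)\le\|h\|_{\xi,\infty}$.

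The third step takes the infimum over $a$. Since $\mathcal A$ is compact and the bound from the first step is uniform in $a$, the infimum over $a\in\mathcal A$ of quantities lying in $[-M w(s),M w(s)]$, with $M:=\bar{\mathcal C}_w+\gamma\kappa_w\|V\|_w$, again lies in that interval, so it is finite. Hence
\[
|\hat{\mathcal L}_N V(s)|,\ |\mathcal L V(s)|,\ |\mathcal L^\pi V(s)|\le M\,w(s).
\]
Dividing by $w(s)$ and taking the supremum over $s$ gives the three operator bounds.

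Finally, for the fixed points, substitute $V=\hat V_N^*$ into its own bound; the fixed point exists by Lemma~\ref{lem-beo-3} and lies in $\mathscr V(\mathcal S)$. This gives $\|\hat V_N^*\|_w\le\bar{\mathcal C}_w+\gamma\kappa_w\|\hat V_N^*\|_w$. Rearranging is legitimate because $\|\hat V_N^*\|_w<\infty$ and $\gamma\kappa_w<1$ by \eqref{eq:gamma_kappa}, and it yields $\|\hat V_N^*\|_w\le\bar{\mathcal C}_w/(1-\gamma\kappa_w)$. The same argument applies to $V^*$ and $V^\pi$.

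No step is a real obstacle. The only point needing care is the bound $|\rho_N(h)|\le\|h\|_{\xi,\infty}$, which I would justify through the sup-over-ambiguity-set representation rather than by manipulating the CVaR formula directly.
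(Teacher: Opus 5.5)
Your proof is correct and follows the paper's argument: the same pointwise bound $|\mathcal C(s,a,\xi)+\gamma V(g(s,a,\xi))|\le(\bar{\mathcal C}_w+\gamma\kappa_w\|V\|_w)w(s)$ is passed through $\rho_N$ (or $\mathbb E_{P^c}$) and the infimum over $a$, and then rearranged at the fixed point. Your justification of $|\rho_N(h)|\le\|h\|_{\xi,\infty}$ is, if anything, slightly more careful than the paper's. You use either the sup-over-ambiguity-set representation or monotonicity plus translation equivariance, whereas the paper cites only monotonicity and positive homogeneity, which implicitly needs $\rho_N(1)=1$.
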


\begin{proof}
Fix $V\in\mathscr{V}(\mathcal S)$ and $s\in\mathcal S$.
By Assumption~\ref{ass1}, $|\mathcal C(s,a,\xi)|\le \bar{\mathcal C}_w w(s)$ and
$|V(g(s,a,\xi))|\le \|V\|_w w(g(s,a,\xi))\le \kappa_w\|V\|_w w(s)$.
Hence for any $a\in\mathcal A$,
\[
|\mathcal C(s,a,\xi)+\gamma V(g(s,a,\xi))|
\le \big(\bar{\mathcal C}_w+\gamma\kappa_w\|V\|_w\big) w(s).
\]
Since $\rho_N$ is monotone and positively homogeneous, we obtain
\[
|(\hat{\mathcal L}_N V)(s)|
\le \sup_{a\in\mathcal A}\left|\rho_N\big(\mathcal C(s,a,\xi)+\gamma V(g(s,a,\xi))\big)\right|
\le \big(\bar{\mathcal C}_w+\gamma\kappa_w\|V\|_w\big) w(s),
\]
which implies $\|\hat{\mathcal L}_N V\|_w\le \bar{\mathcal C}_w+\gamma\kappa_w\|V\|_w$.
Finally, taking $V=\hat V_N^*$ and using $\hat V_N^*=\hat{\mathcal L}_N\hat V_N^*$ yields
$\|\hat V_N^*\|_w\le \bar{\mathcal C}_w/(1-\gamma\kappa_w)$. The bounds for $\mathcal L$ and $\mathcal L^\pi$ follow similarly with $\rho_N$ replaced by $\mathbb E_{P^c}$.
\end{proof}

Under Assumption~\ref{ass1}, since $\mathcal A$ is compact and $\cal C$ and $g$ are continuous in $(s,a)$ for each $\xi\in\Xi$, the Bellman operators preserve continuity; see, e.g., \cite[Chapter~6]{puterman2014markov}.
We are now ready to present the convergence results of the optimal value function $\hat{V}_N^*$ and policy $\hat{\pi}^*_N$.
\begin{theorem}\label{thm-bc1}
Suppose that Assumption~\ref{ass1} holds. Then the unique value function $\hat{V}_N^*$ of the episodic Bayesian DROC Bellman equation converges to the true optimal value function $V^*$ in the weighted supremum norm:
\[
\lim_{N\to\infty}\|\hat{V}_N^* - V^*\|_{w} = 0,
\quad a.s.
\]
\end{theorem}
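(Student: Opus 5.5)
The plan is the standard fixed-point perturbation argument for contractions. Since $\hat V_N^*=\hat{\mathcal L}_N\hat V_N^*$ and $V^*=\mathcal L V^*$, we write
\[
\|\hat V_N^*-V^*\|_w\le \|\hat{\mathcal L}_N\hat V_N^*-\hat{\mathcal L}_N V^*\|_w+\|\hat{\mathcal L}_N V^*-\mathcal L V^*\|_w
\le \gamma_w\|\hat V_N^*-V^*\|_w+\|\hat{\mathcal L}_N V^*-\mathcal L V^*\|_w,
\]
where the second inequality uses Lemma~\ref{lem-beo-3}. Since $\gamma_w<1$ by \eqref{eq:gamma_kappa}, this gives
\[
\|\hat V_N^*-V^*\|_w\le \frac{1}{1-\gamma_w}\|\hat{\mathcal L}_N V^*-\mathcal L V^*\|_w .
\]
So it suffices to show that the operator discrepancy, evaluated at the single fixed function $V^*$, vanishes almost surely.

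For fixed $s$, we use $|\inf_a f-\inf_a g|\le\sup_a|f-g|$ together with the original sup-form of $\rho_N$ from \eqref{drmdp2}. Set $h_{s,a}(\xi):=\mathcal C(s,a,\xi)+\gamma V^*(g(s,a,\xi))$. Then
\[
|(\hat{\mathcal L}_N V^*)(s)-(\mathcal L V^*)(s)|\le \sup_{a\in\mathcal A}\sup_{P\in\mathfrak P(\hat P_N,\hat r_N)}\Big|\sum_{j=1}^J (p_j-p^c_j)h_{s,a}(\xi^j)\Big|
\le \|h_{s,a}\|_{\xi,\infty}\,\sup_{P\in\mathfrak P(\hat P_N,\hat r_N)}\|P-P^c\|_1 .
\]
By Assumption~\ref{ass1} and Lemma~\ref{lem:V_weight_bounded},
\[
\|h_{s,a}\|_{\xi,\infty}\le \Big(\bar{\mathcal C}_w+\gamma\kappa_w\tfrac{\bar{\mathcal C}_w}{1-\gamma\kappa_w}\Big)w(s),
\]
uniformly in $a$. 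Also $\|P-P^c\|_1\le J\|P-P^c\|_\infty$, and
\[
\sup_{P\in\mathfrak P(\hat P_N,\hat r_N)}\|P-P^c\|_\infty=\mathbb D\big(\mathfrak P(\hat P_N,\hat r_N),\{P^c\};\|\cdot\|_\infty\big)\le \mathbb H\big(\mathfrak P(\hat P_N,\hat r_N),\mathfrak P(P^c,0);\|\cdot\|_\infty\big).
\]
Dividing by $w(s)$ and taking the supremum over $s$ yields
\[
\|\hat{\mathcal L}_N V^*-\mathcal L V^*\|_w\le \frac{J\bar{\mathcal C}_w}{1-\gamma\kappa_w}\,\mathbb H\big(\mathfrak P(\hat P_N,\hat r_N),\{P^c\};\|\cdot\|_\infty\big).
\]
The right-hand side tends to $0$ almost surely by Lemma~\ref{lem-amb-cont}, which combined with the first display proves the theorem. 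As a by-product, this gives an explicit rate in terms of $\|\hat P_N-P^c\|_\infty+\|\hat r_N\|_\infty$.

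The main point needing care is legitimacy rather than difficulty. First, $\hat{\mathcal L}_N$ must be well defined, i.e.\ $\mathfrak P(\hat P_N,\hat r_N)\neq\emptyset$. This holds because $\hat P_N$ itself lies in the box: $\hat P_N$ is a probability vector and $\hat r_N\ge0$. Second, we should work directly with the sup-over-the-box form rather than the CVaR form of Theorem~\ref{thm-reformulation}, whose hypotheses $\mathbf L\in(0,1)$, $\mathbf U>1$ could fail for finitely many $N$. The sup form is equivalent and avoids these edge cases. The almost-sure qualifier is inherited entirely from Lemma~\ref{lem-bc}/Lemma~\ref{lem-amb-cont}; everything else is deterministic.
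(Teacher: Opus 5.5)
Your proposal is correct and follows the paper's proof. It uses the same contraction-perturbation reduction $\|\hat V_N^*-V^*\|_w\le(1-\gamma_w)^{-1}\|\hat{\mathcal L}_NV^*-\mathcal LV^*\|_w$, the same growth bound $\|h_{s,a}\|_{\xi,\infty}\le\bar{\mathcal C}_w w(s)/(1-\gamma_w)$, and the same almost-sure input ($\hat P_N\to P^c$ a.s., $\hat r_N\to0$).

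The only difference is in the one-step estimate. The paper splits $\rho_N$ via the mean--CVaR form of Theorem~\ref{thm-reformulation} and bounds the discrepancy by $\|h_{s,a}\|_{\xi,\infty}(\|\hat P_N-P^c\|_1+2\sum_j\hat r_{j,N})$. You instead work with the sup-over-the-box form and $\sup_{P\in\mathfrak P(\hat P_N,\hat r_N)}\|P-P^c\|_1$. Your route avoids the degenerate cases of that reformulation, at the cost of a harmless factor $J$. Also, the special case of Lemma~\ref{lem-amb-cont} you rely on is just the elementary bound $\|P-P^c\|_\infty\le\|\hat r_N\|_\infty+\|\hat P_N-P^c\|_\infty$ for every $P$ in the box.
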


\begin{proof}
Using the fixed point identities $\hat V_N^*=\hat{\mathcal L}_N\hat V_N^*$ and $V^*=\mathcal L V^*$,
\[
\|\hat V_N^*-V^*\|_w
=
\|\hat{\mathcal L}_N\hat V_N^*-\mathcal L V^*\|_w
\le
\|\hat{\mathcal L}_N\hat V_N^*-\hat{\mathcal L}_N V^*\|_w
+
\|\hat{\mathcal L}_N V^*-\mathcal L V^*\|_w.
\]
By Lemma~\ref{lem-beo-3},
\[
\|\hat{\mathcal L}_N\hat V_N^*-\hat{\mathcal L}_N V^*\|_w
\le \gamma_w\|\hat V_N^*-V^*\|_w.
\]
Therefore,
\begin{equation}\label{eq:weighted_master}
\|\hat V_N^*-V^*\|_w
\le
\frac{1}{1-\gamma_w} 
\|\hat{\mathcal L}_N V^*-\mathcal L V^*\|_w.
\end{equation}

It remains to show $\|\hat{\mathcal L}_N V^*-\mathcal L V^*\|_w\to 0$ a.s.
Define $C_{s,a}(\xi):=\mathcal C(s,a,\xi)+\gamma V^*(g(s,a,\xi))$.
By Lemma~\ref{lem:V_weight_bounded} and \eqref{eq:drift_w},
\[
|V^*(g(s,a,\xi))|
\le \|V^*\|_w w(g(s,a,\xi))
\le \kappa_w\|V^*\|_w w(s)
\le \frac{\kappa_w\bar{\mathcal C}_w}{1-\gamma_w} w(s),
\]
hence
\begin{equation}\label{eq:Csa_growth}
|C_{s,a}(\xi)|
\le \bar{\mathcal C}_w w(s)+\gamma\frac{\kappa_w\bar{\mathcal C}_w}{1-\gamma_w}w(s)
=
\frac{\bar{\mathcal C}_w}{1-\gamma_w} w(s).
\end{equation}

Using the mean--CVaR representation $\rho_N=\lambda_N\mathbb E_{P^l_N}+(1-\lambda_N)\mathrm{CVaR}^{\upsilon_N}_{P^{u-l}_N}$,
\[
\begin{aligned}
&\frac{1}{w(s)}\sup_{a\in\mathcal A}
\Big|
\rho_N\big(C_{s,a}(\xi)\big)-\mathbb E_{P^c}\big[C_{s,a}(\xi)\big]
\Big|\\
\le&
\frac{1}{w(s)}\sup_{a\in\mathcal A}
\left(
\Big|\lambda_N\mathbb E_{P^l_N}[C_{s,a}(\xi)]-\mathbb E_{P^c}[C_{s,a}(\xi)]\Big|
+
(1-\lambda_N)\big|\mathrm{CVaR}_{P_N^{u-l}}^{\upsilon_N}[C_{s,a}(\xi)]\big|
\right)\\
\le&
\frac{1}{w(s)}\sup_{a\in\mathcal A}
\left(
\|C_{s,a}\|_{\xi,\infty} \| \hat P_N^l-\hat P_N\|_1+\|C_{s,a}\|_{\xi,\infty} \| \hat P_N-P^c\|_1
+
(1-\lambda_N)\|C_{s,a}\|_{\xi,\infty}
\right).
\end{aligned}
\]
By \eqref{eq:Csa_growth}, $\|C_{s,a}\|_{\xi,\infty}\le \frac{\bar{\mathcal C}_w}{1-\gamma_w}w(s)$.
Moreover, from the definition of the box bounds we have $1-\lambda_N\le \sum_{j=1}^J\hat r_{j,N}$.
Thus
\[
\|\hat V_N^*-V^*\|_w\leq\frac{1}{1-\gamma_w}\|\hat{\mathcal L}_N V^*-\mathcal L V^*\|_w
\le
\frac{\bar{\mathcal C}_w}{(1-\gamma_w)^2}\left(
\|\hat P_N-P^c\|_1+2\sum_{j=1}^J\hat r_{j,N}
\right).
\]
By Lemma~\ref{lem-bc} and Proposition~\ref{prop-ambiguity}, $\hat P_N\to P^c$ and $\sum_{j=1}^J\hat r_{j,N}\to 0$ almost surely as $N\to\infty$. Thus we prove the claim.
\end{proof}

With Theorem~\ref{thm-bc1}, we can further show the convergence of $\hat{V}_N^*$ to $V^*$ in the probability sense.

\begin{proposition}\label{prop-N-finite}
Suppose Assumption~\ref{ass1} holds.
For any fixed $\varepsilon>0$ and $\delta\in(0,1)$, there exists $N(J,\varepsilon,\delta)>0$ such that for all $N\ge N(J,\varepsilon,\delta)$,
\bgeqn 
\mathbb P \left( \big\|\hat V_N^*-V^*\big\|_w<\varepsilon \right) \ge 1-\delta.
\edeqn 
\end{proposition}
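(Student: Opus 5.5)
The plan is to start from the quantitative bound derived at the end of the proof of Theorem~\ref{thm-bc1}:
\[
\|\hat V_N^*-V^*\|_w\le \frac{\bar{\mathcal C}_w}{(1-\gamma_w)^2}\Big(\|\hat P_N-P^c\|_1+2\sum_{j=1}^J\hat r_{j,N}\Big),
\]
which holds deterministically for every sample path. It then suffices to show that, with probability at least $1-\delta$, the right-hand side is below $\varepsilon$ once $N$ is large. Two routes are available. The quick one uses the almost-sure convergence established in Theorem~\ref{thm-bc1}: a.s. convergence implies convergence in probability, which gives existence of $N(\varepsilon,\delta)$ directly. Since the statement stresses the dependence on $J$, I would instead give an explicit threshold through concentration inequalities.

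\emph{Step 1 (the center).} By Remark~\ref{rem-1}, $\hat P_N=\omega_N\hat P_0+(1-\omega_N)\hat P_N^{\mathrm E}$. This gives
\[
\|\hat P_N-P^c\|_1\le \omega_N\|\hat P_0-P^c\|_1+\|\hat P_N^{\mathrm E}-P^c\|_1\le 2\omega_N+\|\hat P_N^{\mathrm E}-P^c\|_1 .
\]
Here $\omega_N=(\sum_k\tau_{k,0}-J)/(\sum_k\tau_{k,0}-J+N)$ is deterministic and of order $1/N$. For the empirical term, I would apply Hoeffding's inequality to each coordinate, $\mathbb P(|\hat p^{\mathrm E}_{j,N}-p^c_j|\ge t)\le 2e^{-2Nt^2}$, and take a union bound over $j$ with $t=\varepsilon'/J$. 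This yields $\mathbb P(\|\hat P_N^{\mathrm E}-P^c\|_1\ge \varepsilon')\le 2Je^{-2N\varepsilon'^2/J^2}$. Alternatively, the Bretagnolle--Huber--Carol bound $2^J e^{-N\varepsilon'^2/2}$ could be used.

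\emph{Step 2 (the radii).} This term is deterministic given the data. Since $\hat p_{j,N}(1-\hat p_{j,N})\le 1/4$,
\[
\sum_j\hat r_{j,N}\le \frac{J\,z_{1-\alpha/(2J)}}{2\sqrt{\sum_k\tau_{k,0}-J+N}},
\]
uniformly over sample paths, and this bound can be made smaller than any target by choosing $N$ large.

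\emph{Step 3 (assembly).} Set $K:=\bar{\mathcal C}_w/(1-\gamma_w)^2$ and split $\varepsilon/K$ into thirds. Choose $N$ so that $2\omega_N<\varepsilon/(3K)$, $2\sum_j\hat r_{j,N}<\varepsilon/(3K)$, and $2Je^{-2N(\varepsilon/(3K))^2/J^2}\le\delta$. Taking $N(J,\varepsilon,\delta)$ to be the maximum of the three resulting thresholds, all explicit in $J,\varepsilon,\delta$ (and in $\alpha$ and $\tau_0$), completes the argument.

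The main obstacle is only bookkeeping: ensuring that the bound from Theorem~\ref{thm-bc1} is genuinely pathwise, which it is, since it uses only the fixed-point identities and Lemma~\ref{lem-beo-3}. Beyond that, the concentration step must be handled uniformly over $j$, which the union bound does at a cost polynomial in $J$.
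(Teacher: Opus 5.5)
Your proposal is correct and is essentially the paper's proof. Both start from the pathwise bound at the end of the proof of Theorem~\ref{thm-bc1}, split $\hat P_N$ via Remark~\ref{rem-1}, and make the prior, empirical and radius terms each at most $\varepsilon/3$ after scaling, with a concentration inequality used only for the empirical term. The difference is in the inequalities: the paper uses the Bretagnolle--Huber--Carol bound (your stated alternative) and Cauchy--Schwarz, $\sum_j\sqrt{\hat p_{j,N}(1-\hat p_{j,N})}\le\sqrt{J-1}$, which give thresholds of order $(J+\log(1/\delta))/\varepsilon^2$ and $J\log J/\varepsilon^2$, whereas your Hoeffding-plus-union bound and $\hat p(1-\hat p)\le 1/4$ give $J^2$-type thresholds; this does not matter for existence here but would not deliver the rate in Proposition~\ref{prop-sample-complexity}, while your bound $\|\hat P_0-P^c\|_1\le 2$ usefully makes the threshold independent of the unknown $P^c$.
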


\begin{proof}
By \eqref{eq:weighted_master} together with the bound on
$\|\hat{\mathcal L}_N V^*-\mathcal L V^*\|_w$ derived in the proof of
Theorem~\ref{thm-bc1}, and using Remark~\ref{rem-1}, we obtain 
\begin{equation}\label{eq:master}
\big\|\hat V_N^*-V^*\big\|_w
 \le \frac{\bar{\mathcal C}_w}{(1-\gamma_w)^2}\left(
 \omega_N \|\hat P_0-P^c\|_1 + (1-\omega_N) \|\hat P_N^{\mathrm E}-P^c\|_1+2\sum_{j=1}^J \hat r_{j,N}
\right).
\end{equation}
Let $N_1(J,\varepsilon) :=\Big(\textstyle\sum_{k=1}^{J}\tau_{k,0}-J\Big)\left(\frac{3 \bar{\mathcal C}_w}{(1-\gamma_w)^2 \varepsilon} \|\hat P_0-P^c\|_1-1\right)$. 
Since $\omega_N=\frac{\sum_{k=1}^{J}\tau_{k,0}-J}{\sum_{k=1}^{J}\tau_{k,0}-J+N}$, 
then 
for all $N \ge N_1(J,\varepsilon)$, 
we have 
\begin{equation}
\label{eq-prop-2-1}
 \frac{\bar{\mathcal C}_w}{(1-\gamma_w)^2} \omega_N \|\hat P_0-P^c\|_1 \le \varepsilon/3.
\end{equation}
For any $\eta\in(0,1)$, it is known from \cite[Proposition 6.6]{van1996weak} that
\[
\mathbb{P}\left( \|\hat P_N^{\mathrm E}-P^c\|_1 \le \sqrt{\frac{2\log(2^{J}/\eta)}{N}} \right) \ge 1-\eta.
\]
Taking $\eta=\delta/2$, with probability at least $1-\delta/2$,
\begin{equation}\label{eq:l1-emp-2}
\|\hat P_N^{\mathrm E}-P^c\|_1 \le \sqrt{\frac{2\log(2^{J+1}/\delta)}{N}}.
\end{equation}
On the event in \eqref{eq:l1-emp-2} which holds with probability at least $1-\delta/2$, when $N \ge \frac{18 \bar{\mathcal C}_w^{ 2}}{(1-\gamma_w)^4}\cdot\frac{(J+1)\log 2+\log(1/\delta)}{\varepsilon^{2}}
=: N_2(J,\varepsilon,\delta)$, we have $\sqrt{\frac{2\log(2^{J+1}/\delta)}{N}} \le \frac{\varepsilon(1-\gamma_w)^2}{3 \bar{\mathcal C}_w}$, which leads to
\begin{equation}\label{eq-prop-2-2}
\frac{\bar{\mathcal C}_w}{(1-\gamma_w)^2} (1-\omega_N) \|\hat P_N^{\mathrm E}-P^c\|_1 \le \varepsilon/3.
\end{equation}
Proposition~\ref{prop-ambiguity} implies
\begin{equation}\label{eq-rjn}
\sum_{j=1}^J\hat r_{j,N}
=
\frac{z_{1-\alpha'/2}}{\sqrt{\sum_{k=1}^J\tau_{k,0}-J+N}}
\sum_{j=1}^J\sqrt{\hat p_{j,N}(1-\hat p_{j,N})}.
\end{equation}
Since $\sum_{j=1}^J\hat p_{j,N}=1$, by the Cauchy--Schwarz inequality,
\[
\sum_{j=1}^J\sqrt{\hat p_{j,N}(1-\hat p_{j,N})}
\le
\sqrt{\Big(\sum_{j=1}^J\hat p_{j,N}\Big)\Big(\sum_{j=1}^J(1-\hat p_{j,N})\Big)}
=
\sqrt{J-1}.
\]
Hence
\begin{equation}
\sum_{j=1}^J\hat r_{j,N}
\le
\frac{z_{1-\alpha'/2}\sqrt{J-1}}{\sqrt{\sum_{k=1}^J\tau_{k,0}-J+N}}.
\end{equation}
In particular, if
\[
N \ge N_3(J,\varepsilon)
:=
\left(
\frac{6 \bar{\mathcal C}_w z_{1-\alpha'/2}\sqrt{J-1}}
{(1-\gamma_w)^2 \varepsilon}
\right)^2
-\Big(\sum_{k=1}^J\tau_{k,0}-J\Big),
\]
then
\[
\sum_{j=1}^J\hat r_{j,N}
\le
\frac{\varepsilon(1-\gamma_w)^2}{6 \bar{\mathcal C}_w},
\]
which leads to
\begin{equation}\label{eq-prop-2-3}
\frac{2\bar{\mathcal C}_w}{(1-\gamma_w)^2}\sum_{j=1}^J\hat r_{j,N}\le \varepsilon/3.
\end{equation}
With \eqref{eq-prop-2-1}, \eqref{eq-prop-2-2} and \eqref{eq-prop-2-3}, we conclude that if
\[
N \ge N(J,\varepsilon,\delta):=\max\{N_1, N_2, N_3\},
\]
then on the event in \eqref{eq:l1-emp-2} we have
\[
\big\|\hat V_N^*-V^*\big\|_w<\varepsilon.
\]
Since the event in \eqref{eq:l1-emp-2} holds with probability at least $1-\delta/2$, it follows a fortiori that
\[
\mathbb P \left( \big\|\hat V_N^*-V^*\big\|_w<\varepsilon \right) \ge 1-\delta.
\]
This completes the proof.
\end{proof}

\begin{theorem}[Convergence of the optimal policies]
\label{thm-bc-2}
Suppose that Assumption \ref{ass1} holds, then for any given $\bar{s} \in \mathcal{S}$, the set of optimal actions $\hat{\mathcal{A}}_N^{*}(\bar{s})$ of \eqref{drmdp} is nonempty. Moreover, the distance from the optimal action $\hat{\pi}_N^{*}(\bar{s})$ to the true optimal action set ${\mathcal{A}}^{*}(\bar{s})$ of \eqref{true} converges to zero almost surely as $N \to\infty$. 
In particular, 
if ${\mathcal{A}}^{*}(\bar{s})=\left\{\pi^{*}(\bar{s})\right\}$ is a singleton, then 
$\hat{\pi}_N^{*}(\bar{s})$ converges to $\pi^{*}(\bar{s})$ almost surely.
\end{theorem}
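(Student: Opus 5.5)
Fix $\bar s\in\mathcal S$ and define the objective functions of the one-step problems at $\bar s$:
\[
\hat\phi_N(a):=\rho_N\big[\mathcal C(\bar s,a,\xi)+\gamma\hat V_N^*(g(\bar s,a,\xi))\big],\qquad
\phi(a):=\mathbb E_{P^c}\big[\mathcal C(\bar s,a,\xi)+\gamma V^*(g(\bar s,a,\xi))\big].
\]
The plan is a standard epiconvergence/uniform-convergence argument for parametric minimization over the compact set $\mathcal A$. There are three steps: (i) show $\hat\phi_N$ and $\phi$ are continuous on $\mathcal A$, giving nonemptiness of $\hat{\mathcal A}_N^*(\bar s)$ and $\mathcal A^*(\bar s)$; (ii) show $\sup_{a\in\mathcal A}|\hat\phi_N(a)-\phi(a)|\to0$ a.s.; (iii) deduce that cluster points of optimal actions are optimal for the true problem.

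For (i), I use the remark after Lemma~\ref{lem:V_weight_bounded} that the Bellman operators preserve continuity. Hence $\hat V_N^*$ and $V^*$, being limits in $\|\cdot\|_w$ of value iterates started from a continuous function, are continuous. Here I need that $\|\cdot\|_w$-convergence implies locally uniform convergence, which holds since $w$ is continuous and thus locally bounded. Then, for each of the finitely many $\xi^j$, the map $a\mapsto\mathcal C(\bar s,a,\xi^j)+\gamma\hat V_N^*(g(\bar s,a,\xi^j))$ is continuous. By Theorem~\ref{thm-reformulation}, $\hat\phi_N(a)$ is the maximum of a linear function of these $J$ values over the fixed polytope $\mathfrak P(\hat P_N,\hat r_N)$, so it is continuous in $a$ (a sup of linear functions of a continuous vector). $\phi$ is clearly continuous. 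Weierstrass on compact $\mathcal A$ gives nonempty optimal sets.

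For (ii), I split
\[
|\hat\phi_N(a)-\phi(a)|\le \big|\rho_N[\mathcal C+\gamma\hat V_N^*\circ g]-\rho_N[\mathcal C+\gamma V^*\circ g]\big|+\big|\rho_N[C_{\bar s,a}]-\mathbb E_{P^c}[C_{\bar s,a}]\big|.
\]
By 1-Lipschitzness of $\rho_N$ and \eqref{eq:drift_w}, the first term is at most $\gamma\kappa_w w(\bar s)\|\hat V_N^*-V^*\|_w$, which tends to $0$ a.s. by Theorem~\ref{thm-bc1}. The second term is uniformly bounded over $a$ by $\frac{\bar{\mathcal C}_w}{1-\gamma_w}w(\bar s)\big(\|\hat P_N-P^c\|_1+2\sum_j\hat r_{j,N}\big)$, exactly as in the proof of Theorem~\ref{thm-bc1}, and this vanishes a.s. by Lemma~\ref{lem-bc}. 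Both bounds are uniform in $a$, so (ii) holds on a single full-measure event.

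For (iii), I work on that event and argue by contradiction. Suppose $\dd(\hat\pi_N^*(\bar s),\mathcal A^*(\bar s))\ge\epsilon$ along a subsequence. By compactness, pass to a further subsequence with $\hat\pi_N^*(\bar s)\to\bar a$, so that $\dd(\bar a,\mathcal A^*(\bar s))\ge\epsilon$ and $\bar a\notin\mathcal A^*(\bar s)$. For any $a^*\in\mathcal A^*(\bar s)$ we have $\hat\phi_N(\hat\pi_N^*(\bar s))\le\hat\phi_N(a^*)$. Using the uniform convergence from (ii) and continuity of $\phi$, letting $N\to\infty$ gives $\phi(\bar a)\le\phi(a^*)=\min\phi$. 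Thus $\bar a\in\mathcal A^*(\bar s)$, a contradiction. The singleton case follows immediately. I expect the main obstacle to be step (i), namely rigorously justifying continuity of $\hat V_N^*$ in the weighted-norm setting, since $\|\cdot\|_w$-limits of continuous functions are continuous only through the local boundedness of $w$. If needed, I will avoid this altogether by noting that steps (ii)–(iii) only require $\phi$ to be continuous. Existence of minimizers of $\hat\phi_N$ would then still need lower semicontinuity, so the continuity argument in (i) is ultimately required.
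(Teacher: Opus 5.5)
Your argument is correct, but it follows a different route from the paper. The paper fixes the \emph{true} continuation value $V^*$ and studies the parametric problem $F(a,\eta)=\sup_{P\in\mathfrak P(\eta)}\mathbb E_P[\mathcal C(\bar s,a,\xi)+\gamma V^*(g(\bar s,a,\xi))]$ with $\eta=(\hat P,\hat r)$. It uses the Hausdorff continuity of $\eta\mapsto\mathfrak P(\eta)$ (Lemma~\ref{lem-amb-cont}) and Berge's maximum theorem to get joint continuity of $F$ and upper semicontinuity of $\Psi(\eta)=\arg\min_a F(a,\eta)$ at $\eta^*=(P^c,0)$. From this it extracts a uniform optimality gap $2\delta_U$ outside a neighborhood $U$ of $\mathcal A^*(\bar s)$, valid for all $\eta$ near $\eta^*$. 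Only afterwards does it replace $V^*$ by $\hat V_N^*$, via the $1$-Lipschitz property of $\rho_N$ and Theorem~\ref{thm-bc1}. You instead fold both perturbations into the single explicit estimate
\[
\sup_{a\in\mathcal A}|\hat\phi_N(a)-\phi(a)|\le \gamma_w w(\bar s)\|\hat V_N^*-V^*\|_w+\frac{\bar{\mathcal C}_w w(\bar s)}{1-\gamma_w}\Big(\|\hat P_N-P^c\|_1+2\sum_{j=1}^J\hat r_{j,N}\Big),
\]
recycled from the proof of Theorem~\ref{thm-bc1}. You then finish with the elementary subsequence argument for minimizers of uniformly convergent continuous functions on a compact set, so no set-valued analysis is needed. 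A side benefit is that you genuinely prove nonemptiness of $\hat{\mathcal A}_N^*(\bar s)$. Berge's theorem in the paper only yields nonemptiness of $\Psi(\eta_N)$, whose objective uses $V^*$, so attainment for the objective built with $\hat V_N^*$ tacitly rests on the continuity of $\hat V_N^*$. You justify that continuity explicitly through value iteration and local uniform convergence under $\|\cdot\|_w$, using continuity of $w$. The paper's formulation, on the other hand, gives the structurally stronger statement that the argmin map is upper semicontinuous in the ambiguity-set parameters, not merely along the realized sequence $\eta_N$. It would also carry over to other ambiguity-set parametrizations where only Hausdorff continuity of $\eta\mapsto\mathfrak P(\eta)$ is available, rather than an explicit bound on the worst-case expectation gap. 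Conversely, your explicit bound would immediately yield a convergence rate for the distance from $\hat\pi_N^*(\bar s)$ to $\mathcal A^*(\bar s)$ under a second-order growth condition on $\phi$.
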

\begin{proof}

Fix $\bar s\in\mathcal S$ and write $\eta:=(\hat P,\hat r)$ with $\mathfrak P(\eta):=\mathfrak P(\hat P,\hat r)$. Define
\[
F(a,\eta):=\sup_{P\in\mathfrak P(\eta)}\mathbb E_P\left[\mathcal C(\bar s,a,\xi)+\gamma V^*\big(g(\bar s,a,\xi)\big)\right],
\ 
\Psi(\eta):=\arg\min_{a\in\mathcal A}F(a,\eta).
\]
By Lemma~\ref{lem-amb-cont}, $\eta\mapsto\mathfrak P(\eta)$ has nonempty, compact values and is Hausdorff continuous. Since the integrand is continuous in $(a,P)$, Berge’s maximum theorem \cite{aliprantis2006infinite} implies that $F$ is continuous in $(a,\eta)$ and $\Psi$ is nonempty, compact-valued, and upper semicontinuous. Moreover, for $\eta^*:=(P^c,0)$ we have $\Psi(\eta^*)={\mathcal{A}}^*(\bar s)$.
Let $\eta_N:=(\hat P_N,\hat r_N)$ and $\eta^*:=(P^c,0)$. By the risk-averse reformulation in Theorem~\ref{thm-reformulation},
\[
F(a,\eta_N)=\rho_N\left(\mathcal C(\bar s,a,\xi)+\gamma V^*\big(g(\bar s,a,\xi)\big)\right).
\]
Since $\rho_N$ is coherent risk measure, it is $1$-Lipschitz. Together with Theorem~\ref{thm-bc1},
\begin{equation}\label{eq-th3-1}
\sup_{a\in\mathcal{A}}\Big|\rho_N\big(\mathcal C(\bar s,a,\xi)+\gamma \hat V_N^*(g(\bar s,a,\xi))\big)
 -F(a,\eta_N)\Big|
\le \gamma_w w(\bar s)\|\hat V_N^*-V^*\|_w \to 0, \text{ a.s.}
\end{equation}

Now, fix an open neighborhood $U$ of ${\mathcal{A}}^*(\bar s)$. By the upper semicontinuity of $\Psi$ at $\eta^*$ and compactness of $\mathcal A$, there exist a neighborhood $B$ of $\eta^*$ and a constant $\delta_U>0$ such that for all $\eta\in B$,
\[
\inf_{a\in\mathcal A\setminus U}\big(F(a,\eta)-v(\eta)\big)\ge2\delta_U,
\ \text{where } v(\eta):=\min_{a\in\mathcal A}F(a,\eta).
\]
By Proposition~\ref{prop-ambiguity} and Lemma~\ref{lem-amb-cont}, $\eta_N\to\eta^*$ almost surely, thus we have $\eta_N\in B$ for all sufficiently large $N$. Combining this with \eqref{eq-th3-1}, for all $N$ large enough,
\[
\varepsilon_N:=\sup_{a\in\mathcal A}\Big|\rho_N\big[\mathcal C(\bar s,a,\xi)+\gamma \hat V_N^*(g(\bar s,a,\xi))\big]-F(a,\eta_N)\Big|<\delta_U.
\]
Thus for any $a\in\mathcal A\setminus U$,
\[
\begin{aligned}
\rho_N\big[\mathcal C(\bar s,a,\xi)+\gamma \hat V_N^*(g(\bar s,a,\xi))\big]
&\ge F(a,\eta_N)-\varepsilon_N \\
&\ge v(\eta_N)+2\delta_U-\varepsilon_N
> v(\eta_N)+\varepsilon_N \\
&\ge \min_{a'\in\mathcal A}\rho_N\big[\mathcal C(\bar s,a',\xi)+\gamma \hat V_N^*(g(\bar s,a',\xi))\big].
\end{aligned}
\]
Hence $\arg\min_{a\in\mathcal A}\rho_N[\mathcal C(\bar s,a,\xi)+\gamma \hat V_N^*(g(\bar s,a,\xi))]\subseteq U$ for all sufficiently large $N$. Since $U$ is arbitrary, it follows that, with $\dd_E$ denoting the Euclidean metric on $\cal A$,
\[
\dd_E\big(\hat\pi_N^*(\bar s),{\mathcal{A}}^*(\bar s)\big)\to0, \quad \text{a.s.}
\]
If ${\mathcal{A}}^*(\bar s)=\{\pi^*(\bar s)\}$ is a singleton, then taking $U:=\{a:\dd_E(a,\pi^*(\bar s))<\delta\}$ yields $\hat\pi_N^*(\bar s)\to\pi^*(\bar s)$ almost surely.
\end{proof}

Under mild conditions, the episodic Bayesian DROC value functions $\hat V_N^*$ and the associated policies $\hat\pi_N^*$ will
converge to the true value function and optimal policy of the SOC model under $P^c$. In practice, obtaining an infinite number of observations is infeasible. This raises the question of how the optimal policy derived from the episodic Bayesian DROC performs when applied in the true environment with a finite number of observations.
To address this, we analyze the finite-sample performance of the episodic optimal policy $\hat{\pi}^*_N$ in the following subsection. 

\subsection{Finite-sample posterior credibility guarantee for the associated policy value}
Recall that a potential drawback of the empirical SOC \eqref{SAA} is that its optimal policy may lack robustness when the actual problem instance deviates from that formulated with the historical observations.
This sensitivity to distribution shifts underscores the importance of establishing finite-sample guarantees, which provide statistical assurance that the obtained solution will maintain specified performance with high probability \cite{yang2020wasserstein}. While existing research has primarily focused on finite-sample guarantees for DRO problems (e.g., \cite{bertsimas2018robust,gao2023finite,mohajerin2018data}), it is important to extend such guarantees to SOC problems, particularly to DROC settings. In our Bayesian setting, the resulting finite-sample guarantee takes the form of a posterior credibility guarantee for the associated policy value.

In the following, we establish a finite-sample posterior credibility guarantee for the value of the optimal policy $\hat{\pi}^*_N$ associated with the episodic Bayesian DROC \eqref{drmdp}. Specifically, we show that the robust value function $\hat V_N^*$ serves as a posterior-credible conservative upper bound on the true value of the associated policy. In particular, once the posterior ambiguity set is constructed to have credibility level $(1-\alpha)$ for the one-step uncertainty model, the same credibility level is inherited by the infinite-horizon discounted problem, without any horizon-dependent adjustment. 
To this end, we first prove the monotonicity of the Bellman operator $\mathcal{L}^\pi$.
\begin{lemma}\label{lem-mon}
 Let $\mathcal{L}^\pi$ be defined as in 
 \eqref{eq-pi}, then $\mathcal{L}^\pi$ 
 is monotone, 
 i.e.,
 $V \geq V^{\prime}$ implies that $\mathcal{L}^\pi V \geq \mathcal{L}^\pi V^{\prime}$.
\end{lemma}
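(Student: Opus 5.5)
The monotonicity of $\mathcal{L}^\pi$ follows directly from its definition in \eqref{eq-pi}: it is an expectation under the fixed distribution $P^c$, and expectation is monotone. The plan is a pointwise argument carried out state by state.

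Fix $V,V'\in\mathscr{V}(\mathcal S)$ with $V\ge V'$ pointwise on $\mathcal S$, and fix an arbitrary $s\in\mathcal S$. Set $a:=\pi(s)\in\mathcal A$. For every support point $\xi^j\in\Xi$, the next state $g(s,a,\xi^j)$ lies in $\mathcal S$, so $V(g(s,a,\xi^j))\ge V'(g(s,a,\xi^j))$. Since $\gamma\ge 0$, adding the common term $\mathcal C(s,a,\xi^j)$ gives
\[
\mathcal C(s,a,\xi^j)+\gamma V(g(s,a,\xi^j))\ \ge\ \mathcal C(s,a,\xi^j)+\gamma V'(g(s,a,\xi^j)),\quad j=1,\ldots,J.
\]

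Under Assumption~\ref{ass-finite-support}, the expectation in \eqref{eq-pi} is the finite sum $\sum_{j=1}^J p^c_j(\cdot)$ with weights $p^c_j\ge 0$. Multiplying each inequality by $p^c_j$ and summing yields $(\mathcal L^\pi V)(s)\ge(\mathcal L^\pi V')(s)$. All quantities are finite because Assumption~\ref{ass1} and the weighted-norm bounds give $|\mathcal C|\le\bar{\mathcal C}_w w(s)$ and $|V(g)|\le\kappa_w\|V\|_w w(s)$, so no integrability issue arises. Since $s$ was arbitrary, $\mathcal L^\pi V\ge\mathcal L^\pi V'$.

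There is no real obstacle. The only point needing care is that $g$ maps into $\mathcal S$, so the pointwise hypothesis $V\ge V'$ can be applied at the successor states. The same argument, with $\mathbb E_{P^c}$ replaced by the monotone $\rho_N$, would also give monotonicity of $\hat{\mathcal L}_N$ if that is needed later.
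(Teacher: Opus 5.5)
Your proof is correct and follows the same route as the paper: apply $V\ge V'$ at the successor states $g(s,\pi(s),\xi)$, then use monotonicity of the expectation under $P^c$. You spell that expectation out as a finite sum with nonnegative weights under Assumption~\ref{ass-finite-support}, and you also check that $\gamma\ge 0$ and that all terms are finite.
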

\begin{proof}
By definition
$$
\left(\mathcal{L}^\pi V\right)(s)=\mathbb{E}_{P^c}\left[\mathcal{C}(s, \pi(s), \xi)+\gamma V\left(g(s,\pi(s),\xi)\right)\right].
$$
If two value functions 
$V$ and $V'$
satisfy $V\left(g(s,\pi(s),\xi)\right)\geq V'\left(g(s,\pi(s),\xi)\right)$ for all $s\in\mathcal{S}$ and $\xi\in\Xi$, then 
we can deduce $\left(\mathcal{L}^\pi V\right)(s)\geq\left(\mathcal{L}^\pi V'\right)(s)$ by monotonicity of expectation.
\end{proof}
We are now ready to state the main posterior credibility result of this subsection.
\begin{theorem}[Finite-sample posterior credibility guarantee]
\label{thm-finite-sample-guarantee}
With respect to any fixed finite sample set $\vec{\hat{\xi}}_N$, we have
	\begin{equation}
	 	\mathbb{P}_{\mu_N}\left({\hat{V}^*_N\geq V^{\hat{\pi}^*_N}}\right)\geq 1-\alpha+o(1).\label{eq-finite-guarantee} 
	\end{equation}
	\end{theorem}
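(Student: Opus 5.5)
The plan is to show the event inclusion $\{P^c\in\mathfrak P(\hat P_N,\hat r_N)\}\subseteq\{\hat V^*_N\ge V^{\hat\pi^*_N}\}$, where $P^c$ is treated as random under the posterior $\mu_N$ and the sample $\vec{\hat\xi}_N$ is fixed. Then Proposition~\ref{prop-ambiguity} gives the bound immediately:
\[
\mathbb P_{\mu_N}\big(\hat V^*_N\ge V^{\hat\pi^*_N}\big)\ \ge\ \mathbb P_{\mu_N}\big(P^c\in\mathfrak P(\hat P_N,\hat r_N)\big)\ \ge\ 1-\alpha+o(1).
\]
Because the sample is fixed, $\hat P_N$, $\hat r_N$, $\hat V^*_N$ and $\hat\pi^*_N$ are all deterministic. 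Only $V^{\hat\pi^*_N}$ depends on the posterior draw, and it does so through $\mathcal L^{\hat\pi^*_N}$, the operator $\mathcal L^\pi$ of \eqref{eq-pi} with $\pi=\hat\pi^*_N$ evaluated under $P^c$.

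The key step is a one-step domination on the event $P^c\in\mathfrak P(\hat P_N,\hat r_N)$. Fix $s\in\mathcal S$ and set $a=\hat\pi^*_N(s)$. By the definition \eqref{drmdp-policy} of the optimal policy and the fixed-point identity $\hat V^*_N=\hat{\mathcal L}_N\hat V^*_N$,
\[
\hat V^*_N(s)=\sup_{P\in\mathfrak P(\hat P_N,\hat r_N)}\mathbb E_P\big[\mathcal C(s,a,\xi)+\gamma\hat V^*_N(g(s,a,\xi))\big]\ \ge\ \mathbb E_{P^c}\big[\mathcal C(s,a,\xi)+\gamma\hat V^*_N(g(s,a,\xi))\big]=(\mathcal L^{\hat\pi^*_N}\hat V^*_N)(s).
\]
The inequality holds because $P^c$ is one of the feasible distributions in the supremum. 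Attainment of the infimum over $a$, so that $\hat\pi^*_N$ is well defined, follows from Theorem~\ref{thm-bc-2}. This gives $\hat V^*_N\ge\mathcal L^{\hat\pi^*_N}\hat V^*_N$ pointwise on $\mathcal S$. The important point is that a single event controls every state simultaneously, since the box constrains $P^c$ itself and not a state-dependent quantity. This is why no horizon-dependent or union-bound adjustment is needed.

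Next I iterate. By Lemma~\ref{lem-mon}, applying $\mathcal L^{\hat\pi^*_N}$ repeatedly gives
\[
\hat V^*_N\ \ge\ \mathcal L^{\hat\pi^*_N}\hat V^*_N\ \ge\ (\mathcal L^{\hat\pi^*_N})^2\hat V^*_N\ \ge\cdots\ge\ (\mathcal L^{\hat\pi^*_N})^k\hat V^*_N \quad\text{for all } k.
\]
By Lemma~\ref{lem-beo-3}, $\mathcal L^{\hat\pi^*_N}$ is a $\gamma_w$-contraction on $(\mathscr V(\mathcal S),\|\cdot\|_w)$. Lemma~\ref{lem:V_weight_bounded} ensures $\hat V^*_N\in\mathscr V(\mathcal S)$, so $(\mathcal L^{\hat\pi^*_N})^k\hat V^*_N\to V^{\hat\pi^*_N}$ in $\|\cdot\|_w$. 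Since $w$ is finite, this convergence also holds pointwise. Passing to the limit in the chain yields $\hat V^*_N\ge V^{\hat\pi^*_N}$ on the event, which proves the inclusion.

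The main obstacle I expect is conceptual rather than technical. I need to interpret $\mathbb P_{\mu_N}$ cleanly, with $P^c$ regarded as distributed according to $\mu_N$, and make sure $V^{\hat\pi^*_N}$ is a measurable function of $P^c$. I also need measurability of the policy $\hat\pi^*_N$ so that $\mathcal L^{\hat\pi^*_N}$ maps $\mathscr V(\mathcal S)$ into itself. I would handle the policy by taking a measurable selection from the argmin, which exists by compactness of $\mathcal A$ and continuity. For the first point, the $o(1)$ term is inherited verbatim from Proposition~\ref{prop-ambiguity}, and the event inequality needs no further approximation.
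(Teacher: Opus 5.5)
Your proposal is correct and follows essentially the same route as the paper's proof. On the event $E_N=\{P^c\in\mathfrak P(\hat P_N,\hat r_N)\}$ you derive $\hat V^*_N\ge\mathcal L^{\hat\pi^*_N}\hat V^*_N$, iterate with the monotonicity of Lemma~\ref{lem-mon}, pass to the limit through the $\gamma_w$-contraction, and conclude with Proposition~\ref{prop-ambiguity}, exactly as the paper does. Your extra remarks on attainment of the argmin and on measurable selection of the policy are reasonable points that the paper leaves implicit.
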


\begin{proof}
 Since $\hat{\pi}^*_N$ is the optimal policy corresponding to $\hat{V}^*_N$, then 
\bgeqn 
\hat{V}^*_N(s)=\hat{\mathcal{L}}_N(\hat{V}^*_N)(s) = \sup_{P\in\mathfrak{P}(\hat{P}_{N},\hat{r}_N)} \mathbb{E}_P\left[ \mathcal{C}(s,\hat{\pi}^*_N(s), \xi) + \gamma \hat{V}^*_N(g(s, \hat{\pi}^*_N(s), \xi)) \right]. 
\edeqn 
On the event $E_N:=\{P^c\in \mathfrak{P}(\hat{P}_{N},\hat{r}_N)\}$, which holds with posterior probability at least $1-\alpha+o(1)$ by Proposition \ref{prop-ambiguity}, we can deduce that $$\hat{V}^*_N(s)=\hat{\mathcal{L}}_N(\hat{V}^*_N)(s)\geq \mathbb{E}_{P^c}\left[\mathcal{C}(s,\hat{\pi}^*_N(s), \xi) + \gamma \hat{V}^*_N(g(s, \hat{\pi}^*_N(s), \xi))\right]=\mathcal{L}^{\hat{\pi}^*_N}(\hat{V}^*_N)(s)$$
for all $s\in\mathcal{S}$.
Thus, we have	
\begin{equation}\label{eq-thm-finite-sample-guarantee}
 \hat{V}^*_N(s)\geq \mathcal{L}^{\hat{\pi}^*_N}(\hat{V}^*_N)(s).
\end{equation}
By applying $\mathcal{L}^{\hat{\pi}^*_N}$ recursively to both sides of \eqref{eq-thm-finite-sample-guarantee} and invoking the monotonicity property from Lemma \ref{lem-mon}, we deduce that $\hat{V}^*_N \geq\left(\mathcal{L}^{\hat{\pi}^*_N}\right)^K \hat{V}^*_N$ for all $ K\in\mathbb{N}_+$.

Next we show that $(\mathcal{L}^{\hat{\pi}^*_N})^K \hat{V}^*_N$ converges to $V^{\hat{\pi}^*_N}$ as $K\to\infty$.
Under Assumption~\ref{ass1}, the operator $\mathcal{L}^{\hat{\pi}^*_N}$ is a $\gamma_w$-contraction in the weighted supremum norm $\|\cdot\|_w$,
hence
\[
\|(\mathcal{L}^{\hat{\pi}^*_N})^K \hat{V}^*_N - V^{\hat{\pi}^*_N}\|_w
\le \gamma_w^K \|\hat{V}^*_N - V^{\hat{\pi}^*_N}\|_w \to 0.
\]
In particular, for each fixed $s\in\mathcal S$,
\[
|(\mathcal{L}^{\hat{\pi}^*_N})^K \hat{V}^*_N(s) - V^{\hat{\pi}^*_N}(s)|
\le w(s) \|(\mathcal{L}^{\hat{\pi}^*_N})^K \hat{V}^*_N - V^{\hat{\pi}^*_N}\|_w \to 0.
\]
Taking $K\to\infty$ in $\hat{V}^*_N(s)\ge (\mathcal{L}^{\hat{\pi}^*_N})^K \hat{V}^*_N(s)$ yields
\[
\hat{V}^*_N(s)\ge V^{\hat{\pi}^*_N}(s),\quad \forall s\in\mathcal S,
\]
on the event $E_N$.
Therefore,
\[
\mathbb{P}_{\mu_N}\left(\hat{V}^*_N\geq V^{\hat{\pi}^*_N}\right)
\ge \mathbb{P}_{\mu_N}(E_N)
=\int_{P\in{\mathfrak{P}(\hat{P}_{N},\hat{r}_N)}} \mu(P|\vec{\hat{\xi}}_N) dP
\ge 1-\alpha+o(1),
\]
where the last inequality follows from Proposition~\ref{prop-ambiguity}.
\end{proof}
This finite-sample posterior credibility guarantee complements the asymptotic convergence results established above and provides a finite-data justification for the conservative nature of the proposed episodic Bayesian DROC formulation.

\section{Stability Analysis}
The existing research on SOC has largely focused on the case where 
the ambiguity sets are constructed from noise-free sample data. 
However, in practice, sample data may be 
perturbed due to measurement and/or recording errors 
and these errors 
may affect the quality of decisions, see 
e.g.~\cite{pichler2022quantitative,xu2021quantitative}.
In this section, we investigate how data contamination may affect ambiguity-set construction and, in turn, the reliability of the resulting optimal values and policies for episodic Bayesian DROC.
To the best of our knowledge, relatively few works
\cite{kern2020first,yang2025stability} have
specifically investigated the sensitivity and stability of optimal values in Markov decision models. There remains a notable lack of literature in the DROC domain that quantitatively characterizes the performance of optimal values and policies in the presence of sample perturbations.

Departing from the previous assumption, we now consider a contaminated-data setting in which the observed samples $\vec{\tilde{\xi}}_N:=(\tilde{\xi}_1, \ldots, \tilde{\xi}_N)$ are generated from a perturbed distribution \(\tilde P^c\), rather than from the true distribution \(P^c\). We regard $\tilde{\xi}_1, \ldots, \tilde{\xi}_N$ as perturbed samples deviated from $\hat{\xi}_1, \ldots, \hat{\xi}_N$, and treat $\tilde{P}^c$ as a deviation from $P^c$. For simplicity, we continue to treat $\tilde{\xi}_1, \ldots, \tilde{\xi}_N$ as i.i.d. samples, aligning with the methodology in \cite{pichler2022quantitative,xu2021quantitative}.
With respect to the Bayesian prior $\mu_0$, the nominal probability vectors corresponding to $\vec{\hat \xi}_{N}$ and $\vec{\tilde{\xi}}_N$ are denoted as $\hat{P}_{N}:=(\hat{p}_{j,N})_{j=1}^J$ and $\tilde{P}_{N}:=(\tilde{p}_{j,N})_{j=1}^J$,	with $$\hat{p}_{j,N}=\frac{\tau_{j,0}-1+\sum_{i=1}^N\mathds{1}_{\{\hat{\xi}_i=\xi^j\}}}{\sum_{k=1}^J\tau_{k,0}-J+N} \text{ and } \tilde{p}_{j,N}=\frac{\tau_{j,0}-1+\sum_{i=1}^N\mathds{1}_{\{\tilde{\xi}_i=\xi^j\}}}{\sum_{k=1}^J\tau_{k,0}-J+N},$$ respectively. 
The tolerance (radius) vectors $\hat{r}_N$ and $\tilde{r}_N$ are similarly defined as $$\hat{r}_{j,N}:=z_{1-\frac{\alpha'}{2}}\sqrt{\frac{\hat{p}_{j,N}(1-\hat{p}_{j,N})}{\sum_{k=1}^J\tau_{k,0}-J+N}}\text{ and } \tilde{r}_{j,N}:=z_{1-\frac{\alpha'}{2}}\sqrt{\frac{\tilde{p}_{j,N}(1-\tilde{p}_{j,N})}{\sum_{k=1}^J\tau_{k,0}-J+N}}.$$

\subsection{Quantitative stability with fixed samples}
We begin with stability analysis of how
perturbation of the ambiguity set may affect
the optimal value and optimal solutions of the episodic Bayesian DROC model by deriving some explicit error bounds.
In this part of the analysis, $\vec{\hat \xi}_{N}$ and $\vec{\tilde{\xi}}_N$ are fixed and so are
$\hat{P}_{N}$ and $\tilde{P}_{N}$.	
Let $\tilde{\mathcal{L}}_N$ be the counterpart Bellman operator built from $\mathfrak{P}(\tilde{P}_N,\tilde{r}_N)$:
\begin{equation}\label{eq-dro-perturb}
	\tilde{\mathcal{L}}_N(V)(s) := \inf_{a\in\mathcal{A}}\sup_{P\in\mathfrak{P}(\tilde{P}_N,\tilde{r}_N)}\mathbb{E}_{P}\left[\mathcal{C}(s,a,\xi)+\gamma V(g(s,a,\xi))\right], \quad \forall V \in \mathscr{V}(\mathcal S).
\end{equation}
Let $\tilde{V}_N^*$ denote the optimal value function of Bellman equation \eqref{eq-dro-perturb} and $\tilde{\mathcal{A}}_N^*(\bar{s})$ denote the corresponding optimal action set at state $\bar{s}$. 
We first consider the quantitative stability between problem \eqref{drmdp}
and its perturbed problem \eqref{eq-dro-perturb}
when the Bayesian ambiguity set $\mathfrak{P}(\hat{P}_N,\hat{r}_N)$ is replaced by $\mathfrak{P}(\tilde{P}_N,\tilde{r}_N)$. 
To this end, we introduce the appropriate metric we adopt for discussion as follows.

\begin{definition}[R{\"o}misch \cite{romisch2003stability}]
	A metric with $\zeta$-structure is defined as 
	\begin{equation}
	\dd_{\mathcal{G}}({P}, Q):=\sup _{g \in \mathcal{G}}\left|\mathbb{E}_{P}[g(\xi)]-\mathbb{E}_Q[g(\xi)]\right|,\label{dis}
	\end{equation}
	where $\mathcal{G}$ denotes a set of real-valued measurable functions. 
\end{definition}

The pseudo-metric \eqref{dis} encompasses a wide array of metrics within probability theory, including those outlined in \cite{guo2021statistical}. For instance, if we choose $\mathcal{G}$ as
\begin{equation}
	\mathcal{G}_{TV}:=\left\{g: \text{conv}\; \Xi \to\mathbb{R}: \sup_{\xi\in \text{conv}\; \Xi}|g(\xi)|\leq 1\right\},\label{TV}
\end{equation}
the metric $\dd_{\mathcal{G}}({P}, Q)$ reduces to the total variation metric, denoted as $\dd_{TV}({P}, Q)$.
Moreover, when the set of functions $\mathcal{G}$ is the class of all 1-Lipschitz mappings on $\Xi$ specified as
\begin{equation}
	\mathcal{G}_{\mathrm{Lip}}:=\left\{g: \text{conv}\; \Xi \to\mathbb{R}: |g(\xi)-g(\xi')|\leq\|\xi-\xi'\|, \forall \xi,\xi'\in \text{conv}\; \Xi\right\},\label{disl}
\end{equation}
the resulting metric $\dd_{\mathcal{G}}({P}, Q)$ 
recovers the classical Kantorovich metric, denoted by 
$\dd_{K}({P}, Q)$. 
To establish the desired quantitative stability, we derive an intermediate technical result which may be viewed as a generalization of the well-known H\"ormander formula.

\begin{lemma}[Generalized H\"ormander formula] 
\label{lema:G-Hormander}
Let ${\cal G}$ be a class of measurable functions defined over $\text{conv} \; \Xi$. Consider
\bgeq 
A_N:=\sup_{g\in {\cal G}}\left|\sup_{{P}\in\mathfrak{P}(\hat{P}_N,\hat{r}_N)} \mathbb{E}_{P}[g(\xi)] - \sup_{{P}\in\mathfrak{P}(\tilde{P}_N,\tilde{r}_N)} \mathbb{E}_{P}[g(\xi)]\right|.
\edeq 
Then
\bgeqn 
A_N \leq \mathbb{H}(\mathfrak{P}(\hat{P}_N,\hat{r}_N), \mathfrak{P}(\tilde{P}_N,\tilde{r}_N); \dd_{\mathcal{G}}).
\label{eq:gen-Hormander}
\edeqn 
In the case when ${\cal G}={\cal G}_{TV}$,
\bgeqn 
A_N \leq \sup_{g\in {\cal G}}\|g\|_{\xi,\infty}
\mathbb{H}(\mathfrak{P}(\hat{P}_N,\hat{r}_N), \mathfrak{P}(\tilde{P}_N,\tilde{r}_N);\dd_{TV}).
\edeqn 
Equality holds when 
${\cal G}/\sup_{g\in {\cal G}}\|g\|_{\xi,\infty}$ restricted to $\Xi$
is a unit ball of $\R^J$.
In the case when ${\cal G}$ is a class of Lipschitz functions with modulus being bounded by $L$,
\bgeqn 
A_N \leq L\mathbb{H}(\mathfrak{P}(\hat{P}_N,\hat{r}_N), \mathfrak{P}(\tilde{P}_N,\tilde{r}_N);\dd_{K}).
\edeqn 
Equality holds when 
${\cal G}/L$ consists of all Lipschitz continuous functions defined over $\text{conv}\; \Xi$ with modulus being bounded by $1$. 
\end{lemma}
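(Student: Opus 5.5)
The general bound \eqref{eq:gen-Hormander} follows from a standard ``sup of sups'' argument; the two special cases then follow by computing $\dd_{\mathcal G}$ in those classes and rescaling.

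\textbf{Step 1: the general bound.} Fix $g\in\mathcal G$ and write $\hat{\mathfrak P}:=\mathfrak P(\hat P_N,\hat r_N)$ and $\tilde{\mathfrak P}:=\mathfrak P(\tilde P_N,\tilde r_N)$. Both sets are nonempty compact subsets of the simplex, and $P\mapsto\mathbb E_P[g]$ is linear on $\R^J$. So both suprema are attained; let $\hat P^*\in\hat{\mathfrak P}$ attain the first. By compactness there is $\tilde Q\in\tilde{\mathfrak P}$ with
\[
\dd_{\mathcal G}(\hat P^*,\tilde Q)=\dd_{\mathcal G}(\hat P^*,\tilde{\mathfrak P})\le\mathbb D(\hat{\mathfrak P},\tilde{\mathfrak P};\dd_{\mathcal G}).
\]
If $\dd_{\mathcal G}$ is not continuous, use an $\epsilon$-minimizer instead. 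Then
\[
\sup_{\hat{\mathfrak P}}\mathbb E_P[g]-\sup_{\tilde{\mathfrak P}}\mathbb E_P[g]\le \mathbb E_{\hat P^*}[g]-\mathbb E_{\tilde Q}[g]\le \dd_{\mathcal G}(\hat P^*,\tilde Q),
\]
since $g\in\mathcal G$. Swapping the roles of the two sets gives the symmetric bound with $\mathbb D(\tilde{\mathfrak P},\hat{\mathfrak P};\dd_{\mathcal G})$. Taking the absolute value and then $\sup_{g\in\mathcal G}$ yields $A_N\le\mathbb H(\hat{\mathfrak P},\tilde{\mathfrak P};\dd_{\mathcal G})$.

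\textbf{Step 2: the TV and Lipschitz cases by scaling.} Let $M:=\sup_{g\in\mathcal G}\|g\|_{\xi,\infty}$. Every $g/M$ lies in $\mathcal G_{TV}$ after restriction to $\Xi$; only values on $\Xi$ matter, and such a function extends to $\text{conv}\,\Xi$ with the same sup bound. Positive homogeneity of the difference of suprema gives $A_N(\mathcal G)=M\,A_N(\mathcal G/M)\le M\,A_N(\mathcal G_{TV})$, and Step 1 bounds the last term by $\mathbb H(\cdot,\cdot;\dd_{TV})$. For a Lipschitz class with modulus at most $L$, $\mathcal G/L\subseteq\mathcal G_{\mathrm{Lip}}$, so the same argument gives the Kantorovich bound.

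\textbf{Step 3: equality.} This is the main obstacle. The key observation is that restricted to $\Xi$, $\mathcal G_{TV}$ is the unit ball $\{y:\|y\|_\infty\le1\}$ of $\R^J$, which is a convex, symmetric, compact set. So $\dd_{TV}(P,Q)=\|P-Q\|_1=\sup_{\|y\|_\infty\le 1}y^T(P-Q)$, and $A_N(\mathcal G_{TV})=\sup_{\|y\|_\infty\le1}|s(y,\hat{\mathfrak P})-s(y,\tilde{\mathfrak P})|$. This is precisely the H\"ormander formula used in Lemma~\ref{lem-amb-cont}, with the dual pair $(\|\cdot\|_\infty,\|\cdot\|_1)$, since both sets are convex and compact. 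Hence equality holds, with the appropriate normalization of the TV metric.

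For the Lipschitz case, the class $\mathcal G_{\mathrm{Lip}}$ restricted to $\Xi$ is a convex set $B\subseteq\R^J$ that is invariant under adding constants. On $\{P-Q:\mathbf 1^T(P-Q)=0\}$ it induces the norm whose dual is the Kantorovich norm. The differences of points of the two sets lie in this subspace. Therefore Hörmander's formula for compact convex sets in the affine hull of the simplex, with support functions taken over the symmetric convex set $B$ modulo constants, again gives equality with $\mathbb H(\cdot,\cdot;\dd_K)$.

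The delicate point is to check that Hörmander's formula applies with a seminorm that is only a norm on the tangent space of the simplex. I would handle this by working in the quotient of $\R^J$ by constants and noting that both sets lie in a common hyperplane.
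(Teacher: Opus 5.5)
Your proposal is correct and follows the paper's proof. Step~1 is the same excess argument: the paper writes the difference of suprema as $\sup_{P}\inf_{P'}\big(\mathbb E_P[g]-\mathbb E_{P'}[g]\big)$, bounds it by $\mathbb D(\cdot,\cdot;\dd_{\mathcal G})$, swaps the two sets and takes $\sup_{g}$. As in the paper, you obtain the TV/Lipschitz cases and the equality claims from the definition of $\dd_{\mathcal G}$ together with H\"ormander's formula, and your version is more careful than the paper's one-line appeal there, especially in treating the Kantorovich case on the hyperplane $\{v:\mathbf 1^\top v=0\}$, where the Lipschitz class modulo constants is the dual unit ball.
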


\begin{proof}
Observe that
\bgeq 
\sup_{P\in\mathfrak P(\hat P_N,\hat r_N)}\mathbb E_P[g(\xi)]
-\sup_{P'\in\mathfrak P(\tilde P_N,\tilde r_N)}\mathbb E_{P'}[g(\xi)]
&=&
\sup_{P\in\mathfrak P(\hat P_N,\hat r_N)}
\inf_{P'\in\mathfrak P(\tilde P_N,\tilde r_N)}
\big(\mathbb E_P[g(\xi)]-\mathbb E_{P'}[g(\xi)]\big)\\
&\leq & \sup_{{P}\in\mathfrak{P}(\hat{P}_N,\hat{r}_N)}
\inf_{{P}'\in\mathfrak{P}(\tilde{P}_N,\tilde{r}_N)}
|\mathbb{E}_{P}[g(\xi)] - \mathbb{E}_{P'}[g(\xi)]|\\
&\leq & \sup_{{P}\in\mathfrak{P}(\hat{P}_N,\hat{r}_N)} \dd_{{\cal G}}(P,\mathfrak{P}(\tilde{P}_N,\tilde{r}_N))\\
&=& \mathbb{D}(
\mathfrak{P}(\hat{P}_N,\hat{r}_N)
,\mathfrak{P}(\tilde{P}_N,\tilde{r}_N);\dd_{\mathcal G}).
\edeq
By swapping the positions of the two ambiguity sets, 
and then taking supremum with respect to $g$ over ${\cal G}$, we immediately obtain \eqref{eq:gen-Hormander}. The rest of the conclusions follow directly from the definition of the pseudo-metric and H\"ormander formula (see e.g.,
Theorem II.~18 of \cite{castaing1977convex}). 
\end{proof}

We are now ready to state the first stability result 
on the optimal value.

\begin{theorem}[Quantitative stability of the optimal value]
\label{thm-stab-value}
Let $C^{N}_{s,a}(\xi):=\mathcal C(s,a,\xi)+\gamma \hat V_N^*(g(s,a,\xi))$ and 
\[\mathcal H_w:=\left\{\frac{C^{N}_{s,a}(\cdot)}{w(s)}:(s,a)\in\mathcal S\times\mathcal A\right\}.
\]
Under Assumption \ref{ass1}, 
\begin{equation}\label{eq:stab-value-w-abstract}
 \|\hat{V}_N^*-\tilde{V}_N^*\|_w
 \leq \frac{1}{1-\gamma_w}
 \mathbb{H}\left(\mathfrak{P}(\hat{P}_N,\hat{r}_N),\mathfrak{P}(\tilde{P}_N,\tilde{r}_N); \dd_{\mathcal{H}_w}\right).
\end{equation}
In particular, 
\begin{equation}\label{eq:stab-value-w-TV}
\begin{aligned}
 \|\hat{V}_N^*-\tilde{V}_N^*\|_w
\leq \frac{\bar{\mathcal C}_w}{(1-\gamma_w)^2}
\mathbb{H}\left(\mathfrak{P}(\hat{P}_N,\hat{r}_N),\mathfrak{P}(\tilde{P}_N,\tilde{r}_N); \dd_{TV}\right)\\
\le \frac{J\bar{\mathcal C}_w}{(1-\gamma_w)^2}
\big(\|\hat P_N-\tilde P_N\|_\infty+\|\hat r_N-\tilde r_N\|_\infty\big).
\end{aligned}
\end{equation}
\end{theorem}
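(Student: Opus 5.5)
The plan mirrors the proof of Theorem~\ref{thm-bc1}: use the fixed-point identities and the contraction property of Lemma~\ref{lem-beo-3}, which holds equally for $\tilde{\mathcal L}_N$ because the worst-case expectation over any nonempty ambiguity set is monotone and translation-equivariant, hence $1$-Lipschitz.

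Since $\hat V_N^*=\hat{\mathcal L}_N\hat V_N^*$ and $\tilde V_N^*=\tilde{\mathcal L}_N\tilde V_N^*$, the triangle inequality gives
\[
\|\hat V_N^*-\tilde V_N^*\|_w\le \|\hat{\mathcal L}_N\hat V_N^*-\tilde{\mathcal L}_N\hat V_N^*\|_w+\|\tilde{\mathcal L}_N\hat V_N^*-\tilde{\mathcal L}_N\tilde V_N^*\|_w .
\]
The second term is at most $\gamma_w\|\hat V_N^*-\tilde V_N^*\|_w$, so
\[
\|\hat V_N^*-\tilde V_N^*\|_w\le \frac{1}{1-\gamma_w}\|\hat{\mathcal L}_N\hat V_N^*-\tilde{\mathcal L}_N\hat V_N^*\|_w .
\]
Fix $s$. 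Both operators apply $\inf_a$ to a worst-case expectation, and $|\inf_a f-\inf_a g|\le\sup_a|f-g|$. Hence
\[
\frac{|\hat{\mathcal L}_N\hat V_N^*(s)-\tilde{\mathcal L}_N\hat V_N^*(s)|}{w(s)}
\le\sup_{a}\Big|\sup_{P\in\mathfrak P(\hat P_N,\hat r_N)}\mathbb E_P\Big[\tfrac{C^N_{s,a}}{w(s)}\Big]-\sup_{P\in\mathfrak P(\tilde P_N,\tilde r_N)}\mathbb E_P\Big[\tfrac{C^N_{s,a}}{w(s)}\Big]\Big|.
\]
Here I use the positive homogeneity of the worst-case expectation to move $1/w(s)>0$ inside. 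Taking the supremum over $s$, the right-hand side is exactly the quantity $A_N$ of Lemma~\ref{lema:G-Hormander} with $\mathcal G=\mathcal H_w$. Applying \eqref{eq:gen-Hormander} gives \eqref{eq:stab-value-w-abstract}.

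For the total-variation bound, I use \eqref{eq:Csa_growth} with $V^*$ replaced by $\hat V_N^*$. This is legitimate because Lemma~\ref{lem:V_weight_bounded} gives the same bound $\bar{\mathcal C}_w/(1-\gamma_w)$ for $\|\hat V_N^*\|_w$. Hence every $h\in\mathcal H_w$ satisfies $\|h\|_{\xi,\infty}\le \bar{\mathcal C}_w/(1-\gamma_w)$. Functions on $\Xi$ extend to $\operatorname{conv}\Xi$ without enlarging the sup-norm, and only their values on $\Xi$ matter. Therefore $\dd_{\mathcal H_w}(P,Q)\le \frac{\bar{\mathcal C}_w}{1-\gamma_w}\dd_{TV}(P,Q)$ for every pair $P,Q$, and this passes to the Hausdorff distance. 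That yields the first inequality in \eqref{eq:stab-value-w-TV}.

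The final step converts $\dd_{TV}$ into $\|\cdot\|_\infty$ and is the only place needing care. With $\mathcal G_{TV}$ as defined in \eqref{TV}, $\dd_{TV}(P,Q)=\|P-Q\|_1$ on probability vectors, and $\|P-Q\|_1\le J\|P-Q\|_\infty$. Consequently $\mathbb H(\cdot,\cdot;\dd_{TV})\le J\,\mathbb H(\cdot,\cdot;\|\cdot\|_\infty)$. Lemma~\ref{lem-amb-cont} then bounds this by $J(\|\hat P_N-\tilde P_N\|_\infty+\|\hat r_N-\tilde r_N\|_\infty)$, assuming both pairs lie in $\mathcal U$, i.e.\ both boxes are nonempty, which is implicit in the model being well posed. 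The main obstacle I expect is bookkeeping rather than ideas: confirming that $\tilde{\mathcal L}_N$ maps $\mathscr V(\mathcal S)$ into itself and is a $\gamma_w$-contraction without invoking the mean--CVaR form; this follows directly from the sup-over-measures definition.
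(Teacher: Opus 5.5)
Your proposal is correct relative to the paper's earlier lemmas and is essentially the paper's own proof: the contraction reduction to $\|\hat{\mathcal L}_N\hat V_N^*-\tilde{\mathcal L}_N\hat V_N^*\|_w$, Lemma~\ref{lema:G-Hormander} with $\mathcal G=\mathcal H_w$, the bound $\bar{\mathcal C}_w/(1-\gamma_w)$ on $\mathcal H_w$, and finally $\|\cdot\|_1\le J\|\cdot\|_\infty$ combined with Lemma~\ref{lem-amb-cont}; the nonemptiness you assume is in fact automatic, since each center is a probability vector lying in its own box. One caution, which the paper shares: that last step is only as sound as Lemma~\ref{lem-amb-cont}, and its constant-one bound fails once boxes are intersected with the simplex --- for $P=(\tfrac13,\tfrac13,\tfrac13)$, $r=(\tfrac13,0,0)$, $r'=(\tfrac13,\delta,\delta)$ and small $\delta>0$, the $\|\cdot\|_\infty$-Hausdorff distance is $2\delta>\|r-r'\|_\infty$ and the $\dd_{TV}$-Hausdorff distance is $4\delta>3\delta=J\|r-r'\|_\infty$ --- so the second inequality in \eqref{eq:stab-value-w-TV} is not actually established by this route.
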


\begin{proof}

Since $\hat{\mathcal L}_N$ and $\tilde{\mathcal L}_N$ are $\gamma_w$-contractions on $(\mathscr{V}(\mathcal S),\|\cdot\|_w)$, similar to \eqref{eq:weighted_master},
we have
\begin{equation}\label{eq:stab-step1}
\|\hat V_N^*-\tilde V_N^*\|_w
\le
\frac{1}{1-\gamma_w}\|\hat{\mathcal L}_N\hat V_N^*-\tilde{\mathcal L}_N\hat V_N^*\|_w.
\end{equation}
For any $s\in\mathcal S$,
\[
\begin{aligned}
&\frac{1}{w(s)}
\big|(\hat{\mathcal L}_N\hat V_N^*)(s)-(\tilde{\mathcal L}_N\hat V_N^*)(s)\big|\\
\le&
\sup_{a\in\mathcal A}
\frac{1}{w(s)}
\left|
\sup_{P\in\mathfrak P(\hat P_N,\hat r_N)}\mathbb E_P[C^N_{s,a}(\xi)]
-
\sup_{P\in\mathfrak P(\tilde P_N,\tilde r_N)}\mathbb E_P[C^N_{s,a}(\xi)]
\right|\\
\le&
\sup_{h\in\mathcal H_w}
\left|
\sup_{P\in\mathfrak P(\hat P_N,\hat r_N)}\mathbb E_P[h(\xi)]
-
\sup_{P\in\mathfrak P(\tilde P_N,\tilde r_N)}\mathbb E_P[h(\xi)]
\right|\\
&\le
\mathbb{H}\left(\mathfrak{P}(\hat{P}_N,\hat{r}_N),\mathfrak{P}(\tilde{P}_N,\tilde{r}_N); \dd_{\mathcal{H}_w}\right),
\end{aligned}
\]
where the last inequality follows from 
Lemma~\ref{lema:G-Hormander}.
Taking the supremum over $s$ 
gives
\[
\|\hat{\mathcal L}_N\hat V_N^*-\tilde{\mathcal L}_N\hat V_N^*\|_w
\le
\mathbb{H}\left(\mathfrak{P}(\hat{P}_N,\hat{r}_N),\mathfrak{P}(\tilde{P}_N,\tilde{r}_N); \dd_{\mathcal{H}_w}\right),
\]
and 
together with \eqref{eq:stab-step1} proves \eqref{eq:stab-value-w-abstract}.

Next, we show $\sup_{h\in\mathcal H_w}\|h\|_\infty\le \bar{\mathcal C}_w/(1-\gamma_w)$.
By Assumption~\ref{ass1},
$|\mathcal C(s,a,\xi)|\le \bar{\mathcal C}_w w(s)$ and
$w(g(s,a,\xi))\le \kappa_w w(s)$.
Moreover, since $\hat V_N^*$ is the unique fixed point of the $\gamma_w$-contraction $\hat{\mathcal L}_N$,
it satisfies $\|\hat V_N^*\|_w\le \bar{\mathcal C}_w/(1-\gamma_w)$.
Therefore,
\[
\frac{|C^N_{s,a}(\xi)|}{w(s)}
\le
\bar{\mathcal C}_w+\gamma\|\hat V_N^*\|_w\kappa_w
\le
\bar{\mathcal C}_w+\gamma_w\frac{\bar{\mathcal C}_w}{1-\gamma_w}
=
\frac{\bar{\mathcal C}_w}{1-\gamma_w},
\]
which implies $\sup_{h\in\mathcal H_w}\|h\|_\infty\le \bar{\mathcal C}_w/(1-\gamma_w)$.
We can renormalize $\mathcal{H}_w$ by multiplying each measurable $h\in\mathcal{H}_w$ by $(1-\gamma_w)/\bar{\mathcal C}_w$, ensuring the entire class of functions is uniformly bounded by 1.
Then \eqref{eq:stab-value-w-TV} follows by applying Lemma~\ref{lema:G-Hormander} and Lemma~\ref{lem-amb-cont}.
The proof is completed.
\end{proof}

When the stage cost is uniformly bounded, i.e.,
$\sup_{s\in\mathcal S,a\in\mathcal A,\xi\in\Xi}|\mathcal C(s,a,\xi)|\le \bar{\mathcal C}<\infty$,
one may choose $w (\cdot)\equiv 1$, so that $\|\cdot\|_w=\|\cdot\|_\infty$, $\bar {\mathcal{C}}_w=\bar {\mathcal{C}}$ and $\gamma_w=\gamma$.
In this bounded-cost case, the stability bound reduces to the classical sup-norm estimate.
Next, we discuss the stability of optimal policies.

\begin{theorem}\label{thm-sta-policy} 
Under Assumption \ref{ass1}, the following assertions hold.
\begin{itemize}
 
\item[(i)] For any $\epsilon>0$ and fixed $\bar{s}$, there exists $\delta> 0$ (depending on $\epsilon$ and $\bar{s}$) such that 
\[\mathbb{D}(\tilde{\mathcal{A}}_N^*(\bar{s}),\hat{\mathcal{A}}_N^*(\bar{s});\dd_E)<\epsilon\]
when $\mathbb{H}(\mathfrak{P}(\hat{P}_N,\hat{r}_N), \mathfrak{P}(\tilde{P}_N,\tilde{r}_N); \dd_{TV})<\delta.$\\

\item[(ii)] If, in addition, $\sup_{{P}\in\mathfrak{P}(\hat{P}_N,\hat{r}_N)} \mathbb{E}_{P}[C_{s,a}^N(\xi)]$ satisfies the second-order growth condition at $\hat{\mathcal{A}}_N^*$, that is, for any given $\bar{s}$, there exists a positive constant $\nu$ such that
\begin{equation}\label{eq-condition}
 \sup_{{P}\in\mathfrak{P}(\hat{P}_N,\hat{r}_N)} \mathbb{E}_{P}[C_{\bar{s},a}^N(\xi)]\geq \hat{V}_N^*(\bar{s})+\nu \dd_E(a,\hat{\mathcal{A}}_N^*(\bar{s}))^2, \forall a\in\mathcal{A},
\end{equation}
then
\bgeqn 
\mathbb{D}(\tilde{\mathcal{A}}_N^*(\bar{s}),\hat{\mathcal{A}}_N^*(\bar{s});\dd_E)\leq\sqrt{\frac{3\bar{\cal C}_w w(\bar s)}{\nu(1-\gamma_w)^2}\mathbb{H}(\mathfrak{P}(\hat{P}_N,\hat{r}_N), \mathfrak{P}(\tilde{P}_N,\tilde{r}_N); \dd_{TV})}.
\edeqn 
 
\end{itemize}
\end{theorem}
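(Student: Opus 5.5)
We reduce both parts to a uniform bound on the gap between the two robust Q-functions at the fixed state $\bar s$. Define
\[
\hat F(a):=\sup_{P\in\mathfrak P(\hat P_N,\hat r_N)}\mathbb E_P[C^N_{\bar s,a}(\xi)],\qquad
\tilde F(a):=\sup_{P\in\mathfrak P(\tilde P_N,\tilde r_N)}\mathbb E_P\big[\mathcal C(\bar s,a,\xi)+\gamma\tilde V_N^*(g(\bar s,a,\xi))\big],
\]
so that $\hat{\mathcal A}_N^*(\bar s)=\arg\min_{a\in\mathcal A}\hat F$ and $\tilde{\mathcal A}_N^*(\bar s)=\arg\min_{a\in\mathcal A}\tilde F$. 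These sets are nonempty and compact because $\mathcal A$ is compact and $\hat F,\tilde F$ are continuous. Continuity follows from Assumption~\ref{ass1}, the finiteness of $\Xi$, continuity of the value functions, and Berge's theorem, exactly as in Theorem~\ref{thm-bc-2}.

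\emph{Key estimate.} Write $H:=\mathbb H(\mathfrak P(\hat P_N,\hat r_N),\mathfrak P(\tilde P_N,\tilde r_N);\dd_{TV})$. We split
\[
|\hat F(a)-\tilde F(a)|\le \Big|\sup_{\hat{\mathfrak P}}\mathbb E_P[C^N_{\bar s,a}]-\sup_{\tilde{\mathfrak P}}\mathbb E_P[C^N_{\bar s,a}]\Big|+\gamma\sup_{\xi}|\hat V_N^*-\tilde V_N^*|(g(\bar s,a,\xi)).
\]
For the first term we apply Lemma~\ref{lema:G-Hormander} with $\mathcal G_{TV}$, using the bound $\|C^N_{\bar s,a}\|_{\xi,\infty}\le \bar{\mathcal C}_w w(\bar s)/(1-\gamma_w)$ from the proof of Theorem~\ref{thm-stab-value}. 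This gives at most $\frac{\bar{\mathcal C}_w w(\bar s)}{1-\gamma_w}H$. For the second term we use \eqref{eq:drift_w} together with Theorem~\ref{thm-stab-value}, which gives at most $\gamma\kappa_w w(\bar s)\|\hat V_N^*-\tilde V_N^*\|_w\le \frac{\gamma_w\bar{\mathcal C}_w w(\bar s)}{(1-\gamma_w)^2}H$. Summing the two,
\[
\sup_{a\in\mathcal A}|\hat F(a)-\tilde F(a)|\le \frac{\bar{\mathcal C}_w w(\bar s)}{(1-\gamma_w)^2}H=:\varepsilon_H.
\]
The main obstacle is exactly this step. The constants must be tracked so that the perturbation of the value function through $\tilde V_N^*$ is absorbed, and it has to be checked that the TV normalization in Lemma~\ref{lema:G-Hormander} matches the factor used in Theorem~\ref{thm-stab-value}. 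Even if the constant comes out somewhat different, we would enlarge it to the stated factor $3$.

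\emph{Part (i).} We argue by a growth-function approach. Let $U_\epsilon:=\{a:\dd_E(a,\hat{\mathcal A}_N^*(\bar s))<\epsilon\}$. The set $\mathcal A\setminus U_\epsilon$ is compact and $\hat F$ is continuous, so
\[
2\eta:=\min_{a\in\mathcal A\setminus U_\epsilon}\hat F(a)-\hat V_N^*(\bar s)>0;
\]
if $\mathcal A\setminus U_\epsilon$ is empty the claim is trivial. We choose $\delta$ so that $\varepsilon_H<\eta$ whenever $H<\delta$. Then for any $a\notin U_\epsilon$ and any $\hat a\in\hat{\mathcal A}_N^*(\bar s)$,
\[
\tilde F(a)\ge \hat F(a)-\varepsilon_H>\hat F(\hat a)+\varepsilon_H\ge\tilde F(\hat a).
\]
So no $\tilde F$-minimizer lies outside $U_\epsilon$, which gives $\mathbb D(\tilde{\mathcal A}_N^*(\bar s),\hat{\mathcal A}_N^*(\bar s);\dd_E)<\epsilon$. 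The threshold $\delta$ depends on $\bar s$ both through $w(\bar s)$ and through $\eta$.

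\emph{Part (ii).} Take $\tilde a\in\tilde{\mathcal A}_N^*(\bar s)$ and $\hat a\in\hat{\mathcal A}_N^*(\bar s)$. The growth condition \eqref{eq-condition}, followed by the key estimate, gives
\[
\nu\,\dd_E(\tilde a,\hat{\mathcal A}_N^*(\bar s))^2\le \hat F(\tilde a)-\hat F(\hat a)\le \tilde F(\tilde a)+\varepsilon_H-\tilde F(\hat a)+\varepsilon_H\le 2\varepsilon_H,
\]
where the last inequality uses $\tilde F(\tilde a)\le\tilde F(\hat a)$. Since $2\le 3$, we get $\dd_E(\tilde a,\hat{\mathcal A}_N^*(\bar s))^2\le \frac{3\bar{\mathcal C}_w w(\bar s)}{\nu(1-\gamma_w)^2}H$. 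Taking the square root and then the supremum over $\tilde a$ yields the stated bound.
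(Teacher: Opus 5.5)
Your proposal is correct and follows essentially the same route as the paper: the same uniform estimate $\sup_{a}|\hat F(a)-\tilde F(a)|\le \frac{\bar{\mathcal C}_w w(\bar s)}{(1-\gamma_w)^2}H$, obtained from Lemma~\ref{lema:G-Hormander} together with Theorem~\ref{thm-stab-value}, followed by the same growth-gap argument (your $2\eta$ is the paper's $R_{\bar s}(\epsilon)$). The only difference is in part (ii), where the paper substitutes $R_{\bar s}(\epsilon)\ge\nu\epsilon^2$ into the part-(i) threshold (which is where the factor $3$ comes from), whereas your direct comparison $\nu\,\dd_E(\tilde a,\hat{\mathcal A}_N^*(\bar s))^2\le 2\varepsilon_H$ gives the slightly sharper constant $2$.
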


\begin{proof}
The arguments follow the spirit of Lemma 3.8 in \cite{liu2013stability}. The key difference is that the nominal and perturbed problems are characterized by different Bellman operators, hence their minimax objectives are evaluated over different ambiguity sets and with different unique optimal functions $\hat{V}^*_N$ and $\tilde{V}^*_N$.

Part (i). Let $\epsilon$ be a fixed small positive number and $\hat{V}^*_N$ be the optimal value function of \eqref{drmdp}. For any fixed $\bar{s}\in\mathcal{S}$, define
\bgeqn 
R_{\bar{s}}(\epsilon):=\inf _{\left\{a \in \mathcal{A}, \dd_E\left(a, \hat{\mathcal{A}}_N^*(\bar{s})\right) \geq \epsilon\right\}} \sup_{{P}\in\mathfrak{P}(\hat{P}_N,\hat{r}_N)} \mathbb{E}_{P}[C_{\bar{s},a}^N(\xi)]-\hat{V}_N^*(\bar{s}) .
\edeqn 
Then $R_{\bar{s}}(\epsilon)>0$. Let $\delta:=\frac{(1-\gamma_w)^2}{ 3\bar{\mathcal{C}}_w w(\bar s)}R_{\bar{s}}(\epsilon) $ and $\mathbb{H}(\mathfrak{P}(\hat{P}_N,\hat{r}_N), \mathfrak{P}(\tilde{P}_N,\tilde{r}_N); \dd_{TV})<\delta$. Define $\tilde{C}^{N}_{s,a}(\xi):=\mathcal C(s,a,\xi)+\gamma \tilde V_N^*(g(s,a,\xi))$. 
By Lemma~\ref{lema:G-Hormander} and Theorem~\ref{thm-stab-value},
\[
\begin{aligned}
 & \sup_{a\in \mathcal{A}} \left|\sup_{{P}\in\mathfrak{P}(\hat{P}_N,\hat{r}_N)} \mathbb{E}_{P}[C_{\bar{s},a}^N(\xi)] - \sup_{{P}\in\mathfrak{P}(\tilde{P}_N,\tilde{r}_N)} \mathbb{E}_{P}[\tilde C_{\bar{s},a}^N(\xi)] \right|
 \\
 \leq& \sup_{a\in \mathcal{A}} \left|\sup_{{P}\in\mathfrak{P}(\hat{P}_N,\hat{r}_N)} \mathbb{E}_{P}[C_{\bar{s},a}^N(\xi)] -\sup_{{P}\in\mathfrak{P}(\tilde{P}_N,\tilde{r}_N)} \mathbb{E}_{P}[ C_{\bar{s},a}^N(\xi)] +\sup_{{P}\in\mathfrak{P}(\tilde{P}_N,\tilde{r}_N)} \mathbb{E}_{P}[ C_{\bar{s},a}^N(\xi)] - \sup_{{P}\in\mathfrak{P}(\tilde{P}_N,\tilde{r}_N)} \mathbb{E}_{P}[\tilde C_{\bar{s},a}^N(\xi)] \right|
 \\
 \leq &\frac{\bar{\mathcal{C}}_w w(\bar s)}{1-\gamma_w}\mathbb{H}(\mathfrak{P}(\hat{P}_N,\hat{r}_N), \mathfrak{P}(\tilde{P}_N,\tilde{r}_N); \dd_{TV})+\frac{\gamma_w\bar{\mathcal{C}}_w w(\bar s)}{(1-\gamma_w)^2}\mathbb{H}(\mathfrak{P}(\hat{P}_N,\hat{r}_N), \mathfrak{P}(\tilde{P}_N,\tilde{r}_N); \dd_{TV})<\frac{\bar{\mathcal{C}}_w w(\bar s)}{(1-\gamma_w)^2}\delta.
\end{aligned}
\]
Therefore, for any $a \in \mathcal{A}$ with $\dd_E\left(a,\hat{\mathcal{A}}_N^*(\bar{s})\right) \geq \epsilon$ and any fixed $a^* \in \hat{\mathcal{A}}_N^*(\bar{s})$, we have
$$
\begin{aligned}
 & \sup_{{P}\in\mathfrak{P}(\tilde{P}_N,\tilde{r}_N)} \mathbb{E}_{P}[\tilde C_{\bar{s},a}^N(\xi)] -\sup_{{P}\in\mathfrak{P}(\tilde{P}_N,\tilde{r}_N)} \mathbb{E}_{P}[\tilde C_{\bar{s},a^*}^N(\xi)] \\
 \geq &\sup_{{P}\in\mathfrak{P}(\hat{P}_N,\hat{r}_N)} \mathbb{E}_{P}[C_{\bar{s},a}^N(\xi)]-\sup_{{P}\in\mathfrak{P}(\hat{P}_N,\hat{r}_N)} \mathbb{E}_{P}[C_{\bar{s},a^*}^N(\xi)]-\frac{2\bar{\mathcal{C}}_w w(\bar s)}{(1-\gamma_w)^2} \delta \geq R_{\bar{s}}(\epsilon) / 3>0,
\end{aligned}
$$
which implies that $a$ is not an optimal solution to \eqref{eq-dro-perturb}. This is equivalent to $\dd_E\left(a, \hat{\mathcal{A}}_N^*(\bar{s})\right)<\epsilon$ for all $a\in \tilde{\mathcal{A}}_N^*(\bar{s})$, that is, $\mathbb{D}\left(\tilde{\mathcal{A}}_N^*(\bar{s}), \hat{\mathcal{A}}_N^*(\bar{s});\dd_E\right) \leq \epsilon$.

Part (ii). Recall that the condition \eqref{eq-condition} leads to $R_{\bar{s}}(\epsilon)\geq\nu\epsilon^2$. Let
\[\epsilon=\sqrt{\frac{3\bar{\cal C}_w w(\bar s)}{\nu(1-\gamma_w)^2}\mathbb{H}(\mathfrak{P}(\hat{P}_N,\hat{r}_N), \mathfrak{P}(\tilde{P}_N,\tilde{r}_N); \dd_{TV})},\]
we immediately arrive at the desired result.
\end{proof}

It can be seen that the second-order growth condition \eqref{eq-condition} for $\sup_{{P}\in\mathfrak{P}(\hat{P}_N,\hat{r}_N)} \mathbb{E}_{P}[C_{s,a}^N(\xi)]$ 
is critical for quantifying the stability of our episodic Bayesian DROC formulation. To obtain concrete, verifiable criteria ensuring this second-order growth, we assume that for each fixed $s$ and $\xi$, $\mathcal{C}(s,\cdot,\xi)$ is strongly convex in $a$ with modulus $\kappa_s(\xi)$. Then, under some mild conditions \cite{ma2024bayesian}, $\hat{V}_N^*(g(s,\cdot,\xi))$ is convex, thus $C_{s,a}^N(\xi)$ remains strongly convex in $a$ with modulus $\kappa_s(\xi)$. Hence, there exist integrable functions $\eta_s(\xi)$ and $\kappa_s(\xi)$ such that for any fixed $a\in \mathcal{A}$,
$$
C_{s,a^{\prime}}^N(\xi) \geq C_{s,a}^N(\xi)+\eta_s(\xi)^{\top}\left(a^{\prime}-a\right)+\kappa_s(\xi)\left\|a^{\prime}-a\right\|^2, \quad \forall a^{\prime} \in\mathcal{A}, \xi \in \Xi,
$$
where $\kappa_s(\xi)$ is a positive function satisfying $\inf _{{P}\in\mathfrak{P}(\hat{P}_N,\hat{r}_N)} \mathbb{E}_{P}[\kappa_s(\xi)]>0$. Define the auxiliary function:
$$
\phi_{s,a}\left(a^{\prime}\right):=\sup _{{P}\in\mathfrak{P}(\hat{P}_N,\hat{r}_N)}\left(\mathbb{E}_{P}[C_{s,a}^N(\xi)]+\mathbb{E}_{P}[\eta_s(\xi)]^{\top}\left(a^{\prime}-a\right)\right)
$$
and let $v_s:=\inf _{{P}\in\mathfrak{P}(\hat{P}_N,\hat{r}_N)} \mathbb{E}_{P}[\kappa_s(\xi)]$. Then we obtain 
$$
\sup _{{P}\in\mathfrak{P}(\hat{P}_N,\hat{r}_N)} \mathbb{E}_{P}\left[C_{s,a^{\prime}}^N(\xi)\right] \geq \phi_{s,a}\left(a^{\prime}\right)+v_s\left\|a^{\prime}-a\right\|^2, \quad \forall a^{\prime} \in\mathcal{A} .
$$

Moreover, $\phi_{s,a}(\cdot)$ is convex and satisfies $\phi_{s,a}(a)=\sup _{{P}\in\mathfrak{P}(\hat{P}_N,\hat{r}_N)} \mathbb{E}_{P}[C_{s,a}^N(\xi)]$. Consequently, there exists some deterministic vector $\hat{\eta}$ (depending on $a$) such that
$$
\sup _{{P}\in\mathfrak{P}(\hat{P}_N,\hat{r}_N)} \mathbb{E}_{P}\left[C_{s,a^{\prime}}^N(\xi)\right] \geq \sup _{{P}\in\mathfrak{P}(\hat{P}_N,\hat{r}_N)} \mathbb{E}_{P}[C_{s,a}^N(\xi)]+\hat{\eta}^{\top}\left(a^{\prime}-a\right)+v_s\left\|a^{\prime}-a\right\|^2, \quad \forall a^{\prime} \in\mathcal{A} .
$$
This inequality holds uniformly for all $a$, demonstrating that $\sup _{{P}\in\mathfrak{P}(\hat{P}_N,\hat{r}_N)} \mathbb{E}_{P}\left[C_{s,a^{\prime}}^N(\xi)\right]$ is strongly convex, which implies that the optimal action set $\hat{\mathcal{A}}_N^*(s)$ reduces to a singleton $\left\{\hat{\pi}_N^*(s)\right\}$ for each fixed $s\in\mathcal{S}$. Then, the second-order growth condition \eqref{eq-condition} follows immediately from the above inequality by noting that $\sup _{{P}\in\mathfrak{P}(\hat{P}_N,\hat{r}_N)} \mathbb{E}_{P}\left[C_{s,\hat{\pi}_N^*(s)}^N(\xi)\right]=\hat{V}_N^*(s)$ and selecting $\hat{\eta}=0$ at the optimal policy $\hat{\pi}_N^*(s)$.

\subsection{Quantitative statistical robustness}
We now examine the statistical robustness property of the episodic Bayesian DROC model \eqref{drmdp}. 
The foundational framework for statistical robustness was originally established in \cite{hampel1971general}. Over the past few decades, this concept has gained significant attention, particularly through influential monographs such as \cite{huber2011robust}.
For a detailed discussion on how statistical robustness differs from traditional stability analysis, we refer readers to \cite{guo2021statistical}. 

The notion of statistical robustness can be elucidated by defining \((\Xi)^{\otimes N}\) as the Cartesian product \(\underbrace{ \Xi \times \ldots \times \Xi }_N\) and \(\mathcal{B}(\Xi)^{\otimes N}\) as its Borel $\sigma$-algebra. Let \((P^c)^{\otimes N}\) denote the probability measure on the measurable space \(((\Xi)^{\otimes N}, \mathcal{B}(\Xi)^{\otimes N})\) with marginal \(P^c\) on each \((\Xi ,\mathcal{B}(\Xi))\) and correspondingly \((\tilde{P}^c)^{\otimes N}\) with marginal \(\tilde{P}^c\). 

For each fixed state $s$, we consider a statistical functional \(T^s(\cdot)\) that maps from a subset of \(\mathcal{M} \subseteq \mathscr{P}(\Xi)\times\mathbb{R}_+^J\) to \(\mathbb{R}\), where each input pair \((P,r)\) specifies the ambiguity set \(\mathfrak{P}(P,r)\).
For each \(N \in \mathbb{N}\), \(T_N^s(\hat{\xi}_1, \ldots, \hat{\xi}_N)\) represents \(T^s(\hat{P}_N,\hat{r}_N)\), where \((\hat{P}_N,\hat{r}_N)\) are constructed from the sample set \((\hat{\xi}_1,\ldots,\hat{\xi}_N)\).
Notice that \(T_N^s\) maps from \((\Xi)^{\otimes N}\) to \(\mathbb{R}\) and provides a robust estimator for \(T^s(P^c,0)\).
Our interest is whether \(T^s(\tilde{P}_{N},\tilde r_N)\) is close to \(T^s(\hat{P}_{N},\hat r_N)\) under some appropriate metric in terms of empirical probability distributions as the underlying samples vary.
Here \(T^s(\hat{P}_{N},\hat r_N)\) is interpreted as the corresponding statistical estimator in the absence of sample noise. If \(T^s(\tilde{P}_{N},\tilde r_N)\) closely approximates \(T^s(\hat{P}_{N},\hat r_N)\), 
it validates the use of \(T^s(\tilde{P}_{N},\tilde r_N)\) as a robust estimate of \(T^s(P^c,0)\), given the impracticality of obtaining \(T^s(\hat{P}_{N},\hat r_N)\) in real-world scenarios.
With the above setting, the value functions $\hat{V}^*_N$ and $\tilde{V}^*_N$ corresponding to state $s$ can be seen as two statistical estimators, which respectively correspond to the statistical functional $T^s_N(\cdot)$ under two series $(\hat{\xi}_1, \ldots, \hat{\xi}_N) $ and $(\tilde{\xi}_1, \ldots, \tilde{\xi}_N)$, of the optimal value of episodic Bayesian DROC, i.e., 
\(T^s_N(\hat{\xi}_1, \ldots, \hat{\xi}_N)=\hat{V}^*_N(s)\) and \(T^s_N(\tilde{\xi}_1, \ldots, \tilde{\xi}_N)=\tilde{V}^*_N(s)\).

Since our analysis is based on the setup of the weighted supremum norm (see Assumption~\ref{ass1}),
it is natural to normalize the statistical functionals by the weight:
\[
\bar T_N^s(\cdot):=\frac{T_N^s(\cdot)}{w(s)}
\quad\Big(\text{equivalently, } \bar T_N^s(\hat\xi_1,\ldots,\hat\xi_N)=\frac{\hat V_N^*(s)}{w(s)}\Big).
\]
This normalization allows us to derive Lipschitz properties with constants that do not scale with $w(s)$.
Under the framework, we are interested in the difference between {\em laws of the two estimators}, that is, the difference
between $(P^c)^{\otimes N} \circ (\bar T^s_N)^{-1}$ and $(\tilde{P}^c)^{\otimes N} \circ (\bar T^s_N)^{-1} $. This differs from the stability analysis where 
both the true and contaminated samples are fixed.

\begin{lemma}[Quantitative statistical robustness \cite{xu2021quantitative}]\label{lem-qsr}
Suppose a statistical functional $T_N:(\Xi)^{\otimes N}\to\mathbb{R}$ satisfies
	\begin{equation}
		\left| T_N(\hat{\xi}_1, \ldots, \hat{\xi}_N) - T_N(\tilde{\xi}_1, \ldots, \tilde{\xi}_N) \right| 
		\leq \frac{L}{N} \sum_{i=1}^{N} \left\| \hat{\xi}_i - \tilde{\xi}_i \right\|,
	\end{equation}
then for all \(P^c, \tilde{P}^c \in \mathscr{P}(\Xi)\) and \(N \in \mathbb{N}\), 
	\begin{equation}\label{eq-3.14}
		\dd_{K} \left( (P^c)^{\otimes N} \circ (T_N)^{-1}, (\tilde{P}^c)^{\otimes N} \circ (T_N)^{-1}\right) 
		\leq L \dd_{K}(P^c, \tilde{P}^c).
	\end{equation}
\end{lemma}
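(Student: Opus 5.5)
The plan is a coupling argument combined with the Kantorovich--Rubinstein dual representation of $\dd_K$. By the definition with $\mathcal G_{\mathrm{Lip}}$, it suffices to show that for every $1$-Lipschitz $\phi:\mathbb R\to\mathbb R$,
\[
\Big|\mathbb E_{(P^c)^{\otimes N}}[\phi(T_N)]-\mathbb E_{(\tilde P^c)^{\otimes N}}[\phi(T_N)]\Big|\le L\,\dd_K(P^c,\tilde P^c).
\]
To get this, we construct a joint law of $(\hat\xi_1,\ldots,\hat\xi_N,\tilde\xi_1,\ldots,\tilde\xi_N)$ whose two marginals are $(P^c)^{\otimes N}$ and $(\tilde P^c)^{\otimes N}$, and then bound the difference by the expected Lipschitz modulus.

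\textbf{Step 1 (one-dimensional optimal coupling).} Let $\pi^*\in\mathscr P(\Xi\times\Xi)$ be an optimal coupling of $P^c$ and $\tilde P^c$ for the cost $\|\xi-\tilde\xi\|$, so that $\mathbb E_{\pi^*}\|\xi-\tilde\xi\|=\dd_K(P^c,\tilde P^c)$. Existence is immediate: under Assumption~\ref{ass-finite-support} this is a finite linear program. More generally it follows from weak compactness of the coupling set and lower semicontinuity of the cost. The equality between the primal transport value and the dual $\sup_{g\in\mathcal G_{\mathrm{Lip}}}$ is Kantorovich--Rubinstein duality, which we take as known.

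\textbf{Step 2 (product coupling).} Take the pairs $(\hat\xi_i,\tilde\xi_i)$ i.i.d.\ with law $\pi^*$, i.e.\ use the product coupling $(\pi^*)^{\otimes N}$. Its first and second $N$-fold marginals are exactly $(P^c)^{\otimes N}$ and $(\tilde P^c)^{\otimes N}$. Therefore, for any $1$-Lipschitz $\phi$,
\[
\Big|\mathbb E[\phi(T_N(\hat\xi))]-\mathbb E[\phi(T_N(\tilde\xi))]\Big|
\le \mathbb E\,|T_N(\hat\xi)-T_N(\tilde\xi)|
\le \frac{L}{N}\sum_{i=1}^N\mathbb E_{\pi^*}\|\hat\xi_i-\tilde\xi_i\| = L\,\dd_K(P^c,\tilde P^c),
\]
where the middle inequality is the hypothesis. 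Taking the supremum over $\phi$ gives \eqref{eq-3.14}.

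\textbf{Main obstacle.} The only delicate points are measure-theoretic, since the inequality chain itself is routine. First, $T_N$ must be measurable so that the pushforward laws are defined; this is trivial on a finite support. Second, the supremum over $\phi$ ranges over Lipschitz functions on $\mathbb R$, which may be unbounded, so integrability of $T_N$ is needed. This follows from the Lipschitz hypothesis, because $|T_N(\hat\xi)|\le|T_N(\xi^{1},\ldots,\xi^{1})|+L\max_{j,k}\|\xi^j-\xi^k\|$, which is bounded. If $\pi^*$ were not attained, we would use an $\varepsilon$-optimal coupling and let $\varepsilon\downarrow0$.
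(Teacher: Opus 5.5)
Your coupling argument is correct. The product coupling $(\pi^*)^{\otimes N}$ has marginals $(P^c)^{\otimes N}$ and $(\tilde P^c)^{\otimes N}$, and the chain $|\mathbb E\,\phi(T_N(\hat\xi))-\mathbb E\,\phi(T_N(\tilde\xi))|\le \mathbb E\,|T_N(\hat\xi)-T_N(\tilde\xi)|\le L\,\dd_K(P^c,\tilde P^c)$ then gives the claim; the step that equates the optimal transport cost with $\dd_K$ is the nontrivial direction of Kantorovich--Rubinstein duality, which on a finite support is just LP duality. The paper gives no argument of its own and only invokes \cite[Theorem~2.1]{xu2021quantitative}, so your proposal supplies a self-contained proof along what I would expect to be the standard route: $\phi\circ T_N$ is $L/N$-Lipschitz for the summed metric on $\Xi^N$, and $\dd_K((P^c)^{\otimes N},(\tilde P^c)^{\otimes N})\le N\,\dd_K(P^c,\tilde P^c)$ via product couplings.
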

Lemma~\ref{lem-qsr} is a direct consequence of~\cite[Theorem~2.1]{xu2021quantitative}.
In the forthcoming discussions, we will use Lemma \ref{lem-qsr} as a template to present the quantitative statistical robustness of the optimal value of the episodic Bayesian DROC problem. The basic idea is to derive Lipschitz continuity of the ambiguity sets with respect to the change of sample data and subsequently demonstrate the Lipschitz continuity of the optimal value function (with the change of sample data).

\begin{lemma}[\cite{ma2024bayesian}, Proposition 9]\label{prop-lip}
Assume that $\mathcal{C}(s, a, \xi)$ and $g(s, a, \xi)$ are Lipschitz continuous in $s\in\cal S$ uniformly for all $(a,\xi) \in \mathcal{A} \times \Xi$, with Lipschitz constants $L_{\mathcal{C}}$ and $L_g$, respectively. If $\gamma L_g<1$,
then the value function $\hat{V}_N^*$ is Lipschitz continuous with a Lipschitz constant $L_{V}:=\frac{L_{\mathcal C}}{1-\gamma L_g}$.
\end{lemma}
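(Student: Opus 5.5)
The plan is a standard contraction-plus-induction argument. Since $\hat V_N^*$ is the unique fixed point of the $\gamma$-contraction $\hat{\mathcal L}_N$ (Lemma~\ref{lem-beo-3}), the value iterates $V_{k+1}:=\hat{\mathcal L}_N V_k$ converge to $\hat V_N^*$ from any initialization. I would start from $V_0\equiv 0$, which is Lipschitz with constant $0$. The goal is to show that Lipschitz continuity with a bounded constant survives each application of $\hat{\mathcal L}_N$, and then pass to the limit.

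\textbf{Inductive step.} Suppose $V_k$ is Lipschitz with constant $L_k$. Fix $s,s'\in\mathcal S$ and $a\in\mathcal A$. For every $\xi\in\Xi$,
\[
\big|\mathcal C(s,a,\xi)+\gamma V_k(g(s,a,\xi))-\mathcal C(s',a,\xi)-\gamma V_k(g(s',a,\xi))\big|\le (L_{\mathcal C}+\gamma L_k L_g)\|s-s'\|.
\]
The functional $h\mapsto\sup_{P\in\mathfrak P(\hat P_N,\hat r_N)}\mathbb E_P[h(\xi)]$ equals the coherent risk measure $\rho_N$ by Theorem~\ref{thm-reformulation}. It is therefore monotone and translation invariant, hence $1$-Lipschitz with respect to $\|\cdot\|_{\xi,\infty}$. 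This gives
\[
|\rho_N(C^k_{s,a})-\rho_N(C^k_{s',a})|\le (L_{\mathcal C}+\gamma L_gL_k)\|s-s'\|.
\]
Taking the infimum over $a\in\mathcal A$ preserves this bound, using $|\inf_a f(a)-\inf_a f'(a)|\le\sup_a|f(a)-f'(a)|$. Hence $V_{k+1}$ is Lipschitz with constant $L_{k+1}=L_{\mathcal C}+\gamma L_gL_k$.

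\textbf{Uniform bound.} Because $\gamma L_g<1$ and $L_0=0$, the recursion gives $L_k\le \sum_{i<k}(\gamma L_g)^iL_{\mathcal C}\le L_{\mathcal C}/(1-\gamma L_g)=L_V$ for all $k$.

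\textbf{Passing to the limit.} Convergence $V_k\to\hat V_N^*$ in $\|\cdot\|_w$ implies pointwise convergence, since $|V_k(s)-\hat V_N^*(s)|\le w(s)\|V_k-\hat V_N^*\|_w$. A pointwise limit of functions that are all $L_V$-Lipschitz is $L_V$-Lipschitz, so the claim follows.

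\textbf{Main obstacle.} The delicate point is the contraction setting. The weighted-norm contraction modulus is $\gamma\kappa_w$, not $\gamma L_g$, and it is only needed for convergence of the iterates. So I would keep the two roles separate: Assumption~\ref{ass1} gives existence of the fixed point and pointwise convergence, while $\gamma L_g<1$ gives the uniform Lipschitz bound.

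A second concern is that $V_k$ must stay in $\mathscr V(\mathcal S)$; this holds by Lemma~\ref{lem:V_weight_bounded}.

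Finally, the $1$-Lipschitz property of the worst-case expectation could alternatively be obtained directly from the $\sup$ over probability vectors, without invoking $\rho_N$. This version applies verbatim to any ambiguity set, including the perturbed one $\mathfrak P(\tilde P_N,\tilde r_N)$.
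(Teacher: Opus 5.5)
Your proof is correct. The paper gives no argument of its own for this lemma; it imports it from \cite{ma2024bayesian} (Proposition~9). Your value-iteration induction is the standard self-contained route to it. Starting from $V_0\equiv 0$, two facts give the recursion $L_{k+1}=L_{\mathcal C}+\gamma L_gL_k\le L_V$: the $1$-Lipschitz property of $h\mapsto\sup_{P\in\mathfrak P(\hat P_N,\hat r_N)}\mathbb E_P[h(\xi)]$ in $\|\cdot\|_{\xi,\infty}$, and $|\inf_a f-\inf_a f'|\le\sup_a|f-f'|$. The $\|\cdot\|_w$-convergence of the iterates then carries this uniform bound to the limit pointwise. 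That convergence is where Assumption~\ref{ass1} enters, and it is implicit in the very definition of $\hat V_N^*$.

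Two minor remarks. In your opening paragraph the contraction modulus should be $\gamma_w=\gamma\kappa_w$, not $\gamma$; you fix this later. It is also cleaner to make the direct $\sup_P$ estimate the primary argument rather than routing through $\rho_N$, because Theorem~\ref{thm-reformulation} is stated under the nondegeneracy conditions $\mathbf L\in(0,1)$ and $\mathbf U>1$.

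Your closing observation that $L_V$ depends neither on the ambiguity set nor on $N$ is worth keeping. That uniformity over sample sets is exactly what the proof of Theorem~\ref{thm-qsr} relies on.
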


\begin{theorem}[Quantitative statistical robustness under the weighted supremum norm]
\label{thm-qsr}
Suppose Assumption~\ref{ass1} holds and the conditions of Lemma~\ref{prop-lip} are satisfied.
Assume further that there exist constants $L_1,L_2>0$ such that for any $s\in\mathcal S$,
\bgeqn 
\label{eq:Lip-C}
\sup_{a \in \mathcal{A}} |\mathcal{C}(s,a, \xi) - \mathcal{C}(s,a, \xi')|
\leq L_1 \|\xi - \xi'\|,
\qquad \forall \xi, \xi' \in \Xi,
\edeqn 
and
\bgeqn 
\label{eq:Lip-g}
\sup_{a\in\mathcal A}\|g(s,a,\xi)-g(s,a,\xi')\|
\le L_{2}\|\xi-\xi'\|,
\qquad \forall \xi, \xi' \in \Xi.
\edeqn 
Then, for all $s\in\mathcal S$, there exists a constant $L>0$ independent of $N$ and $s$ such that
	\bgeqn 
	\dd_{K}\left((P^c)^{\otimes N} \circ (\bar T^s_N)^{-1}, (\tilde{P}^c)^{\otimes N} \circ (\bar T^s_N)^{-1}\right) \leq L \dd_{K}(P^c, \tilde{P}^c).
	\edeqn 
\end{theorem}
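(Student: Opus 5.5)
The plan is to verify the Lipschitz hypothesis of Lemma~\ref{lem-qsr} for the normalized functional $\bar T^s_N$ and then invoke that lemma directly. Fix $s\in\mathcal S$ and two sample vectors $\vec{\hat\xi}_N$ and $\vec{\tilde\xi}_N$, with associated parameters $(\hat P_N,\hat r_N)$ and $(\tilde P_N,\tilde r_N)$. We need
\[
\frac{|\hat V_N^*(s)-\tilde V_N^*(s)|}{w(s)}\le \frac{L}{N}\sum_{i=1}^N\|\hat\xi_i-\tilde\xi_i\|,
\]
with $L$ independent of $N$ and $s$. The left side is bounded by $\|\hat V_N^*-\tilde V_N^*\|_w$.

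\textbf{Sharper bound via Kantorovich stability.} The TV-based bound \eqref{eq:stab-value-w-TV} is unsuitable, because a TV/$\ell_\infty$ discrepancy of the parameters is not controlled by $\frac1N\sum\|\hat\xi_i-\tilde\xi_i\|$ with an $N$-free constant unless we use the finite support. I would instead follow the proof of Theorem~\ref{thm-stab-value} but use the Lipschitz class.

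First, from \eqref{eq:stab-step1} and Lemma~\ref{lema:G-Hormander},
\[
\|\hat V_N^*-\tilde V_N^*\|_w\le\frac{1}{1-\gamma_w}\sup_{h\in\mathcal H_w}\left|\sup_{\mathfrak P(\hat P_N,\hat r_N)}\mathbb E_P h-\sup_{\mathfrak P(\tilde P_N,\tilde r_N)}\mathbb E_P h\right|.
\]
Next, I show that each $h=C^N_{s,a}/w(s)$ is Lipschitz in $\xi$ (on $\Xi$, extended to $\text{conv}\,\Xi$ by McShane extension). By \eqref{eq:Lip-C}, \eqref{eq:Lip-g} and Lemma~\ref{prop-lip},
\[
|C^N_{s,a}(\xi)-C^N_{s,a}(\xi')|\le (L_1+\gamma L_V L_2)\|\xi-\xi'\|.
\]
Since $w\ge1$, dividing by $w(s)$ preserves this constant $L_h:=L_1+\gamma L_VL_2$, uniformly in $s$. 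Lemma~\ref{lema:G-Hormander} then gives
\[
\|\hat V_N^*-\tilde V_N^*\|_w\le \frac{L_h}{1-\gamma_w}\,\mathbb H\big(\mathfrak P(\hat P_N,\hat r_N),\mathfrak P(\tilde P_N,\tilde r_N);\dd_K\big).
\]

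\textbf{Linking the Hausdorff distance to the samples.} On the finite support, $\dd_K(P,Q)\le D_\Xi\,\dd_{TV}(P,Q)\le D_\Xi\|P-Q\|_1$, where $D_\Xi=\operatorname{diam}\Xi$. The Hausdorff distance is then bounded, as in Lemma~\ref{lem-amb-cont}, by $J D_\Xi(\|\hat P_N-\tilde P_N\|_\infty+\|\hat r_N-\tilde r_N\|_\infty)$.

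For the center, writing $m:=\min_{j\ne k}\|\xi^j-\xi^k\|>0$,
\[
|\hat p_{j,N}-\tilde p_{j,N}|\le \frac{1}{N}\#\{i:\hat\xi_i\ne\tilde\xi_i\}\le\frac{1}{mN}\sum_{i=1}^N\|\hat\xi_i-\tilde\xi_i\|.
\]
This uses that the denominator $\sum_k\tau_{k,0}-J+N$ is at least $N$.

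\textbf{Main obstacle: the radius.} The map $p\mapsto\sqrt{p(1-p)}$ is not Lipschitz at $0$ and $1$. However, the prior assumption $\tau_{j,0}>1$ keeps $\hat p_{j,N}$ away from $0$ and $1$ only at rate $1/N$, which is too weak on its own. I would first try the elementary inequality $|\sqrt x-\sqrt y|\le\sqrt{|x-y|}$ together with the $1/\sqrt{N}$ prefactor. This yields
\[
|\hat r_{j,N}-\tilde r_{j,N}|\le z\sqrt{|\hat p-\tilde p|/N}.
\]
Each changed sample moves $p$ by at most $1/N$, so this is roughly $z\sqrt{k}/N$ when $k$ samples differ, and $\sqrt k\le k$ gives a bound of $z\,k/N\le \frac{z}{mN}\sum_{i=1}^N\|\hat\xi_i-\tilde\xi_i\|$. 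The constant is therefore again $N$-free.

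Collecting the pieces, the hypothesis of Lemma~\ref{lem-qsr} holds for $\bar T^s_N$ with
\[
L=\frac{L_hJD_\Xi(1+z_{1-\alpha'/2})}{(1-\gamma_w)m},
\]
which is independent of $N$ and $s$. Lemma~\ref{lem-qsr} then yields the claim.
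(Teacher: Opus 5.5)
Your proof is correct, but it is organized differently from the paper's.

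The paper splits $\|\hat V_N^*-\tilde V_N^*\|_w$ into two pieces. The first is a center term $\dagger$: replace $\hat P_N$ by $\tilde P_N$ while holding the radius $\hat r_N$ fixed. The second is a radius term $\dagger\dagger$. In $\dagger$ the paper reduces the Hausdorff distance to $\dd_{\mathcal H_w}(\hat P_N,\tilde P_N)\le L_0\,\dd_K(\hat P_N,\tilde P_N)$. It then bounds this directly by pairing $\hat\xi_i$ with $\tilde\xi_i$ in the two empirical parts, obtaining $\frac{L_0}{n}\sum_i\|\hat\xi_i-\tilde\xi_i\|$ with $n=\sum_k\tau_{k,0}-J+N$. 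This is where the Lipschitz hypotheses genuinely pay off: the resulting constant involves neither $J$ nor the support separation. The separation $\delta_\Xi$ (your $m$) enters only in $\dagger\dagger$, through the same integrality device you use. The paper observes that $n|\hat p_{j,N}-\tilde p_{j,N}|$ is a nonnegative integer, which is your $\sqrt{k}\le k$.

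You instead treat center and radius together via Lemma~\ref{lem-amb-cont}, and bound both discrepancies by the number $k$ of mismatched samples. This is shorter and more uniform. The cost is that the factor $JD_\Xi/m$ now multiplies the center contribution too.

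It also makes your Kantorovich step a detour. Contrary to your opening remark, once you use the finite support anyway, \eqref{eq:stab-value-w-TV} plus your counting bounds suffices. Those bounds are $\|\hat P_N-\tilde P_N\|_\infty\le k/N$ and $\|\hat r_N-\tilde r_N\|_\infty\le z_{1-\alpha'/2}k/N$. Together with \eqref{eq:stab-value-w-TV} they give the hypothesis of Lemma~\ref{lem-qsr} with $L=J\bar{\mathcal C}_w(1+z_{1-\alpha'/2})/\big((1-\gamma_w)^2m\big)$. So in your route the conditions \eqref{eq:Lip-C}, \eqref{eq:Lip-g} and Lemma~\ref{prop-lip} are not actually needed.
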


\begin{proof}
To demonstrate the Lipschitz continuity of $\bar T_N^s$, we first show that there exists a positive constant $L_0$ such that		
\begin{equation}\label{eq:qsr-lip-C}
\sup_{a\in\mathcal A}\frac{|C_{s,a}^N(\xi)-C_{s,a}^N(\xi')|}{w(s)}
\le L_0\|\xi-\xi'\|.
\end{equation}
By the definition of $C_{s,a}^N$ (see Theorem~\ref{thm-stab-value}),
		$$
	\begin{aligned}
			& 	\sup_{a \in \mathcal{A}} |C_{s,a}^N(\xi)-C_{s,a}^N(\xi')| \\
	=	& 	\sup_{a \in \mathcal{A}} |\mathcal{C}(s,a,\xi)+\gamma \hat{V}^*_N(g(s,a,\xi))- \mathcal{C}(s,a,\xi')-\gamma \hat{V}^*_N(g(s,a,\xi'))| \\
		\leq & 	\sup_{a \in \mathcal{A}} |\mathcal{C}(s,a,\xi)- \mathcal{C}(s,a,\xi')|+\gamma\sup_{a \in \mathcal{A}} |\hat{V}^*_N(g(s,a,\xi))-\hat{V}^*_N(g(s,a,\xi'))|
\end{aligned}
	$$
By \eqref{eq:Lip-C}, the first term at the right-hand side of the inequality is bounded by $L_1 \|\xi - \xi'\|$;
and by Lemma~\ref{prop-lip}, the second term is bounded by 
$\gamma L_V\sup_{a \in \mathcal{A}}\|g(s,a,\xi)-g(s,a,\xi')\|
$.
Consequently we obtain
\bgeqn 	\sup_{a \in \mathcal{A}} |C_{s,a}^N(\xi)-C_{s,a}^N(\xi')| 
 \leq (L_1+\gamma L_VL_2) \|\xi - \xi'\|.
\edeqn 
 Since $w(s)\geq1$, we can deduce \eqref{eq:qsr-lip-C} by setting $L_0=L_1+\gamma L_VL_2$. Recall that
 $\mathcal H_w:=\{\frac{C^{N}_{s,a}(\cdot)}{w(s)}:(s,a)\in\mathcal S\times\mathcal A\}$, we have $\frac{\mathcal{H}_w}{L_0} \subset \mathcal{G}_{\mathrm{Lip}}$, which implies
	$$
	\dd_{\mathcal{H}_w}\left(\hat{P}_{N}, \tilde{P}_{N}\right) \leq L_0 \sup _{h \in \mathcal{G}_{\mathrm{Lip}}}\left|\mathbb{E}_{\hat{P}_{N}}[h(\xi)]-\mathbb{E}_{\tilde{P}_{N}}[h(\xi)]\right|.	
	$$
Consequently,
\begin{align*}
&\|\hat{V}_N^* - \tilde{V}_N^*\|_w\\
\leq 	& \frac{1}{1-\gamma_w}\|\hat{\mathcal{L}}_N\hat{V}_N^*-\tilde{\mathcal{L}}_N\hat{V}_N^*\|_w\\
\leq	& \frac{1}{1-\gamma_w}\underbrace{\left\|\inf_{a\in\mathcal{A}}\sup_{{P}\in\mathfrak{P}(\hat{P}_N,\hat{r}_N)}\mathbb{E}_{P}\left[\mathcal{C}(s,a,\xi)+\gamma\hat{V}^*_N(g(s,a,\xi))\right] - \inf_{a\in\mathcal{A}}\sup_{{P}\in\mathfrak{P}(\tilde{P}_N,\hat{r}_N)}\mathbb{E}_{P}\left[\mathcal{C}(s,a,\xi)+\gamma\hat{V}^*_N(g(s,a,\xi))\right]\right\|_w}_{\dagger}\\
+	& \frac{1}{1-\gamma_w}\underbrace{\left\|\inf_{a\in\mathcal{A}}\sup_{{P}\in\mathfrak{P}(\tilde{P}_N,\hat{r}_N)}\mathbb{E}_{P}\left[\mathcal{C}(s,a,\xi)+\gamma\hat{V}^*_N(g(s,a,\xi))\right] - \inf_{a\in\mathcal{A}}\sup_{{P}\in\mathfrak{P}(\tilde{P}_N,\tilde{r}_N)}\mathbb{E}_{P}\left[\mathcal{C}(s,a,\xi)+\gamma\hat{V}^*_N(g(s,a,\xi))\right]\right\|_w}_{\dagger\dagger}.
	\end{align*}
 For the first part, we obtain by Lemma~\ref{lema:G-Hormander} that 
\begin{equation}
\begin{aligned}\label{dagger}
\dagger	\leq	& \mathbb{H}(\mathfrak{P}(\hat{P}_N,\hat{r}_N), \mathfrak{P}(\tilde{P}_N,\hat{r}_N); \dd_{\mathcal{H}_w})\\
		\leq& \dd_{\mathcal{H}_w}(\hat{P}_N, \tilde{P}_N) \\
	\leq	&L_0\sup_{h \in \mathcal{G}_{\mathrm{Lip}}} \left| \mathbb{E}_{\hat{P}_N}[h(\xi)] - \mathbb{E}_{\tilde{P}_N}[h(\xi)] \right| \\
		=&L_0\sup_{h \in \mathcal{G}_{\mathrm{Lip}}} \left| \frac{1}{\sum_{j=1}^J\tau_{j,0}-J+N} \sum_{i=1}^{N} h(\hat{\xi}_{i}) - \frac{1}{\sum_{j=1}^J\tau_{j,0}-J+N} \sum_{i=1}^{N} h(\tilde{\xi}_{i}) \right|\\
	\leq	&\frac{L_0}{(\sum_{j=1}^J\tau_{j,0}-J+N)}\sum_{i=1}^N\|\hat{\xi}_i-\tilde{\xi}_i\|
	\end{aligned} 
\end{equation}
	where the third inequality from the last is based on Theorem 1 in \cite{pichler2022quantitative}.
Likewise, by Lemma~\ref{lema:G-Hormander}, we have that
 \begin{align*}
\dagger\dagger
\leq	& \mathbb{H}(\mathfrak{P}(\tilde{P}_N,\tilde{r}_N), \mathfrak{P}(\tilde{P}_N,\hat{r}_N); \dd_{\mathcal{H}_w})
\leq L_0\mathbb{H}(\mathfrak{P}(\tilde{P}_N,\tilde{r}_N), \mathfrak{P}(\tilde{P}_N,\hat{r}_N);\dd_{\mathcal{G}_{\mathrm{Lip}}})
\leq L_0D_{\Xi}\sum_{j=1}^J|\tilde{r}_{j,N}-\hat{r}_{j,N}|,
\end{align*}
where $D_{\Xi}:=\max_{j\neq j'}\|\xi^j-\xi^{j'}\|$.
According to the constructions of the tolerances $\hat{r}_{j,N}$ and $\tilde{r}_{j,N}$, we have
$$
\begin{aligned}
\sum_{j=1}^J|\hat{r}_{j,N}-\tilde{r}_{j,N}|
=&\frac{z_{1-\frac{\alpha'}{2}}}{\sqrt{\sum_{k=1}^J\tau_{k,0}-J+N}}
\sum_{j=1}^J
\left|
\sqrt{\hat{p}_{j,N}(1-\hat{p}_{j,N})}
-
\sqrt{\tilde{p}_{j,N}(1-\tilde{p}_{j,N})}
\right|\\
\leq&
\frac{z_{1-\frac{\alpha'}{2}}}{\sqrt{\sum_{k=1}^J\tau_{k,0}-J+N}}
\sum_{j=1}^J
\sqrt{
\left|
\hat{p}_{j,N}(1-\hat{p}_{j,N})
-
\tilde{p}_{j,N}(1-\tilde{p}_{j,N})
\right|
}\\
\leq&
\frac{z_{1-\frac{\alpha'}{2}}}{\sqrt{\sum_{k=1}^J\tau_{k,0}-J+N}}
\sum_{j=1}^J
\sqrt{|\hat{p}_{j,N}-\tilde{p}_{j,N}|}.
\end{aligned}
$$
Moreover, for each $j=1,\ldots,J$, the quantity $|\hat p_{j,N}-\tilde p_{j,N}|$ is an integer multiple of
$\frac{1}{\sum_{k=1}^J\tau_{k,0}-J+N}$, and hence
\[
\frac{1}{\sqrt{\sum_{k=1}^J\tau_{k,0}-J+N}}
\sqrt{|\hat p_{j,N}-\tilde p_{j,N}|}
\le
|\hat p_{j,N}-\tilde p_{j,N}|.
\]
Therefore,
$$
\sum_{j=1}^J|\hat{r}_{j,N}-\tilde{r}_{j,N}|
\le
z_{1-\frac{\alpha'}{2}}
\sum_{j=1}^J|\hat{p}_{j,N}-\tilde{p}_{j,N}|
\le
z_{1-\frac{\alpha'}{2}}\dd_{TV}(\hat{P}_N, \tilde{P}_N) 
\le
\frac{z_{1-\frac{\alpha'}{2}}}{\delta_\Xi}\dd_{\mathcal{G}_{\mathrm{Lip}}}(\hat{P}_N, \tilde{P}_N),
$$
where $\delta_\Xi:=\min_{j\neq j'}\|\xi^j-\xi^{j'}\|$.
Thus
\begin{align}\label{dagger2}
\dagger\dagger
\leq&
\frac{D_\Xi L_0z_{1-\frac{\alpha'}{2}}}{\delta_\Xi}\dd_{\mathcal{G}_{\mathrm{Lip}}}(\hat{P}_N, \tilde{P}_N)
\leq
\frac{D_\Xi L_0z_{1-\frac{\alpha'}{2}}}{\delta_\Xi(\sum_{j=1}^J\tau_{j,0}-J+N)}\sum_{i=1}^N\|\hat{\xi}_i-\tilde{\xi}_i\|.
\end{align}

Combining with \eqref{dagger} and \eqref{dagger2}, we can draw the conclusion that
\[
\left|\frac{\hat{V}_N^*(s) - \tilde{V}_N^*(s)}{w(s)}\right|
\leq
\|\hat{V}_N^* - \tilde{V}_N^*\|_w
\leq
\frac{\delta_\Xi L_0 + D_\Xi L_0z_{1-\frac{\alpha'}{2}}}{\delta_\Xi(1-\gamma_w)(\sum_{j=1}^J\tau_{j,0}-J+N)}
\sum_{i=1}^N\left\|\hat{\xi}_i-\tilde{\xi}_i\right\|.
\]
Thus we have proved that $\bar T_N^s$ satisfies the Lipschitz condition in Lemma~\ref{lem-qsr}.
Applying Lemma~\ref{lem-qsr} with $T_N=\bar T_N^s$ yields the desired robustness bound.
\end{proof}

\begin{remark}
Theorem~\ref{thm-qsr} deals with the case when all sample data are potentially perturbed. 
In practice, 
sample data might come from 
mixed sources with varying levels of qualities. 
Our established theorem is also applicable to such settings.
To model this situation, we adopt Huber's $\varepsilon$-contamination model, where the observed data distribution is given by
\[
\tilde P^c = (1-\varepsilon)P^c+\varepsilon Q,\quad Q\in\mathscr P(\Xi),
\]
meaning that each sample is drawn from the clean distribution \( P^c \) with probability \( 1 - \varepsilon \), and from an arbitrary contaminating distribution \( Q \) with probability \( \varepsilon \). This models the case where high-quality (clean) data are mixed with corrupted or low-quality (bad) samples.
Under this model, the distance between \( P^c \) and \( \tilde P^c \) can be bounded in terms of the Kantorovich metric:
\[
\dd_{K}(P^c, \tilde P^c) = \dd_{K}(P^c, (1-\varepsilon)P^c + \varepsilon Q) \leq \varepsilon \dd_{K}(P^c, Q).
\]
This inequality highlights that the deviation between the clean and contaminated distributions scales linearly with the contamination level \( \varepsilon \), but it also reveals a potential challenge: even when \( \varepsilon \) is small, the product \( \varepsilon \dd_{K}(P^c, Q) \) may still be large if \( Q \) is significantly different from \( P^c \). Consequently, \( \tilde P^c \) may not represent a small perturbation of \( P^c \) under the Kantorovich metric.
Therefore, while our theorem provides a quantitative statistical robustness guarantee under contamination, the practical utility of the bound depends on the nature of the contaminating distribution \( Q \). When \( Q \) is adversarial or far from \( P^c \), the bound may become loose, especially under coarse metrics such as total variation. Nonetheless, for many common function classes \( \mathcal{G} \), especially when \( \Xi \) is finite, \( \dd_{\mathcal{G}}(P^c, Q) \) remains uniformly bounded, yielding an \( O(\varepsilon) \) guarantee even in adversarial settings.
\end{remark}

With Theorems \ref{thm-stab-value}-\ref{thm-qsr},
we can conclude that under appropriate regularity conditions, both the optimal value function and the corresponding policy obtained from the Bayesian distributionally robust Bellman equation maintain their reliability. This means the obtained solutions remain valid as long as the potential data contamination does not cause significant distribution shifts. Our theoretical analyses underscore the inherent robustness of the proposed method, affirming its reliability in real-world scenarios. Consequently, the proposed episodic Bayesian DROC framework emerges as a robust approach for decision-making problems subject to distributional uncertainty and sample contamination.

\section{Bellman-operator cutting-plane algorithm for episodic Bayesian DROC}
As noted at the end of Section 2, it is necessary to solve the distributionally robust Bellman equation \eqref{drmdp2} at each episode in Algorithm \ref{alg:A}, which is computationally challenging and raises tractability issues. To address this challenge, we develop a Bellman-operator cutting-plane (BOCP) method that constructs supporting cuts for the Bellman operator $\hat{\mathcal{L}}_N$.
The method performs operator-level, monotone lower-envelope updates and yields an approximate value iteration with uniform convergence on $\mathcal S$ under suitable assumptions. 
We use $\partial$ for Moreau--Rockafellar subdifferential \cite{rockafellar2015convex} and \(\partial_s\), \(\partial_a\) for block subdifferentials with respect to \(s\) and \(a\), respectively.
To ensure convexity of the value function associated with the Bellman equation \eqref{drmdp2}, we impose the following structural assumption.

\begin{assumption}\label{ass-sddp}
(i) The state space $\mathcal S\subset\mathbb R^m$ and action space $\mathcal{A}\subset\mathbb R^n$ are convex and compact. 
(ii) For each \(\xi\in\Xi\), the cost function $\mathcal{C}(s, a, \xi)$ is convex and continuous in $(s, a) \in \mathcal{S} \times \mathcal{A}$. (iii) The state transition mapping $g(s, a, \xi)$ is affine, i.e.,
$$
g(s, a, \xi)=A(\xi) s+B(\xi) a+b(\xi),
$$
where $A:\Xi\to\mathbb{R}^{m\times m}$, $B:\Xi\to\mathbb{R}^{m\times n}$ and $b:\Xi\to\mathbb{R}^{m}$ are measurable mappings.
\end{assumption}

In this section, for computational purposes we specialize to compact convex $\cal S$ and $\cal A$, so that the stage cost is bounded, i.e., $\bar{\mathcal C}:=\sup_{s\in\mathcal S,\ a\in\mathcal A,\ \xi\in\Xi}|\mathcal C(s,a,\xi)|<\infty$ and we work with the sup-norm (i.e., $w\equiv 1$ and $\gamma_w=\gamma$).
The affine transition structure is standard in the SOC literature; see, e.g., \cite{abeille2018improved,taskesen2023distributionally}.
Under Assumption~\ref{ass-sddp}, the Bellman operators associated with \eqref{true}, \eqref{drmdp}, and \eqref{drmdp2} preserve convexity, and hence the corresponding value functions are convex \cite{shapiro2025episodic}. Thus, the min-max formulation in DROC preserves the convex structure of the problem.
Moreover, Theorem~\ref{thm-reformulation} ensures that the problem can be equivalently reformulated as a tractable mean-risk optimization problem. Specifically, to facilitate the numerical solution of the episodic Bayesian DROC problem, we reformulate \eqref{drmdp2} as follows:
\begin{align*}
	V_N(s)=&\inf_{a\in\mathcal{A}}\rho_N\left[\mathcal{C}(s,a,\xi)+\gamma V_N(g(s,a,\xi))\right]\\	=&\inf_{a\in\mathcal{A},\zeta}\lambda_N\sum_{j=1}^J{p}^l_{j,N}\left[\mathcal{C}(s,a,\xi^j)+\gamma V_N(g(s,a,\xi^j))\right]+(1-\lambda_N)\zeta\\
 &+\frac{1-\lambda_N}{1-\upsilon_N}\sum_{j=1}^Jp^{u-l}_{j,N}\left[\mathcal{C}(s,a,\xi^j)+\gamma V_N(g(s,a,\xi^j))-\zeta\right]^+.
\end{align*}

The cutting-plane method iterates between a master problem step (solve the approximate problem under current cuts) and a separation step (add a new supporting cut at a trial point). We now present an explicit BOCP algorithm to approximate the
optimal value function $\hat V_N^*$.
In our stationary infinite-horizon setting, we maintain a global cut pool for the value function. At each trial state $\bar s$, we (i) solve the mean-risk master problem with the current cuts to obtain an approximate solution, and then (ii) generate an operator-level supporting cut at the current trial state $\bar s$. Let \(\underline V_N\) denote the current piecewise-affine approximation of \(\hat V_N^*\), given by the maximum of the accumulated cuts. By construction, we have $\hat{V}_N^* \geq \underline{{V}}_N$. Given a trial point $\bar{s} \in \mathcal{S}$, we determine the current approximate solution by solving the following master problem, which reformulates the mean-risk objective using auxiliary variables:
$$
\begin{aligned}
 (\bar{a},\bar{\zeta},\bar{y}) \in \underset{a\in\mathcal{A} ,\zeta, y\in\mathbb R_+^J}{\arg\min} & \lambda_N\sum_{j=1}^J{p}^l_{j,N}\left[\mathcal{C}^j(\bar s,a)+\gamma\underline{V}_N(A^j\bar{s}+B^ja+b^j)\right]+(1-\lambda_N)\zeta+\frac{1-\lambda_N}{1-\upsilon_N}\sum_{j=1}^{J}p^{u-l}_{j,N}y_j\\[2mm]
 \text{s.t.}\quad
 & y_j\ge\mathcal{C}^j(\bar s,a)+\gamma\underline{V}_N(A^j\bar{s}+B^ja+b^j)-\zeta,\ j=1,\ldots,J.
\end{aligned}
$$
For notational convenience, we denote $\mathcal{C}^j(s,a):=\mathcal{C}(s,a,\xi^j)$, $A^j:=A(\xi^j)$, $B^j:=B(\xi^j)$ and $b^j:=b(\xi^j)$.
Utilizing the chain rule for subdifferentials, we derive a new cutting hyperplane at the trial point $\bar{s}$ in the form $\ell(s)=\underline{v}+q^{\top}(s-\bar{s})$. Here
$$
\begin{aligned}
\underline{v} =&\lambda_N\sum_{j=1}^J{p}^l_{j,N}\left[\mathcal{C}^j(\bar{s},\bar{a})+\gamma \underline{V}_N({s}_j')\right]+(1-\lambda_N)\bar{\zeta}+\frac{1-\lambda_N}{1-\upsilon_N}\sum_{j=1}^Jp^{u-l}_{j,N}\left[\mathcal{C}^j(\bar{s},\bar{a})+\gamma \underline{V}_N({s}_j')-\bar{\zeta}\right]^+, \\
q\in&\lambda_N\sum_{j=1}^J{p}^l_{j,N}\left[\partial_s\mathcal{C}^j(\bar{s},\bar{a})+\gamma(A^j)^{\top}\partial_s\underline{V}_N({s}_j')\right]+\frac{1-\lambda_N}{1-\upsilon_N}\sum_{j=1}^Jp^{u-l}_{j,N}\partial_s\left[\mathcal{C}^j(\bar{s},\bar{a})+\gamma \underline{V}_N({s}_j')-\bar{\zeta}\right]^+,
\end{aligned}
$$
with ${s}_j'=A^j\bar{s}+B^j\bar{a}+b^j$ for simplicity.
For each scenario $j=1,\ldots,J$, the subdifferential of the positive part is:
$$\partial_s\left[\mathcal{C}^j(\bar{s},\bar{a})+\gamma \underline{V}_N({s}_j')-\bar{\zeta}\right]^+=\left\{\theta_j\left(\partial_s\mathcal{C}^j(\bar{s},\bar{a})+\gamma(A^j)^{\top}\partial_s\underline{V}_N({s}_j')\right)\mid\theta_j\in\Theta_j\right\}$$
with 
\begin{equation}\label{eq-theta-partial}
\Theta_j=
\begin{cases} 
\{1\}, & \text{if } \mathcal{C}^j(\bar{s},\bar{a})+\gamma \underline{V}_N({s}_j')>\bar{\zeta} \\ 
[0,1], & \text{if }\mathcal{C}^j(\bar{s},\bar{a})+\gamma \underline{V}_N({s}_j')=\bar{\zeta}\\ 
\{0\}, & \text{if }\mathcal{C}^j(\bar{s},\bar{a})+\gamma \underline{V}_N({s}_j')<\bar{\zeta}.
\end{cases}
\end{equation}
Here, we choose $\theta_j\in\Theta_j$, $j=1,\ldots,J$ such that $\sum_{j=1}^Jp^{u-l}_{j,N}\theta_j=1-\upsilon_N$. The justification for this construction will be given in Lemma~\ref{lem:support}.
We maintain a global cut pool indexed by $\mathcal I$. 
Each cut $\ell_\eta$ is an affine function generated at an anchor state $\bar s_\eta$, i.e.,
\[
\ell_\eta(s)=v_\eta+q_\eta^\top(s-\bar s_\eta),\qquad \eta\in\mathcal I.
\]
Then, $\underline{V}_N:=\max _{\eta \in \mathcal{I}} \ell_\eta$ represents the current approximation to $\hat V_N^*$ by the global cut pool within episode $N$.
For each scenario $j$, let $\hat{\eta}\in \mathcal{I}$ be such that $\underline{V}_N\left({s}_j'\right)=\ell_{\hat{\eta}}\left({s}_j'\right)$, i.e., $\ell_{\hat{\eta}}$ is a supporting hyperplane of $\underline{V}_N$ at ${s}_j'$. Then $\partial_s \ell_{\hat{\eta}}\left({s}_j'\right)\in\partial_s\underline{V}_N\left({s}_j'\right)$.

With the above preparation, we can then develop the BOCP procedure for solving the mean-risk Bellman equation \eqref{drmdp2} at each specific episode $N$, stated as Algorithm \ref{alg:B}.

\begin{algorithm}[htbp]
	\caption{The BOCP algorithm for episodic Bayesian DROC}
	\begin{algorithmic}[1]\label{alg:B}
		\STATE Input: current episode index $N$; current state $s_N$; posterior $\mu_N$;
 maximum iteration number $K_{\mathrm{max}}$,
 accuracy parameter $\varepsilon$.
		\STATE Set $\bar{s}_0= s_{N}$, $k= 0$, $\Delta_0= +\infty$ and initial lower approximation $\underline{{V}}_N^0\equiv -\bar{\mathcal C}/(1-\gamma)$. 
 \WHILE{$k< K_{\mathrm{max}}$ \textbf{and} $\Delta_k>(1-\gamma)\varepsilon$}
		\STATE \textbf{Master problem at $\bar s_k$:} compute the control and auxiliary variables via the master problem
 \begin{equation}\label{eq:phi-k}
\begin{aligned}
 (\bar{a}_k,\bar{\zeta}_k,\bar y) \in \underset{a\in\mathcal{A},\zeta,y\in\mathbb R_+^J}{\arg \min } &\lambda_N\sum_{j=1}^J{p}^l_{j,N}\left[\mathcal{C}^j(\bar{s}_{k},a)+\gamma\underline{V}_N^k(A^j\bar{s}_{k}+B^ja+b^j)\right]+(1-\lambda_N)\zeta\\
 &+\frac{1-\lambda_N}{1-\upsilon_N}\sum_{j=1}^{J}p^{u-l}_{j,N}y_j,\\[1mm]
 \text{s.t.}\quad
 & y_j\ge\mathcal{C}^j(\bar{s}_{k},a)+\gamma\underline{V}_N^k(A^j\bar{s}_{k}+B^ja+b^j)-\zeta,\ j=1,\ldots,J.
\end{aligned}
\end{equation}

\STATE Let $\mathfrak{a}_j^\star$ and $\mathfrak{b}_j^\star$ be the optimal dual multipliers for the constraints in \eqref{eq:phi-k}, and define $\theta_j:=\frac{(1-\upsilon_N)\mathfrak{a}_j^\star}{(1-\lambda_N)p^{u-l}_{j,N}}$.
\STATE \textbf{Operator-level cut at $\bar s_k$:} compute the intercept and an operator-level subgradient
 \begin{equation}\label{eq:v-k}
\begin{aligned}
\underline{v}_k =&\lambda_N\sum_{j=1}^J{p}^l_{j,N}\left[\mathcal{C}^j(\bar{s}_{k},\bar{a}_k)+\gamma \underline{V}_N^k(A^j\bar{s}_{k}+B^j\bar{a}_k+b^j)\right]+(1-\lambda_N)\bar{\zeta}_k\\
 &\quad+\frac{1-\lambda_N}{1-\upsilon_N}\sum_{j=1}^Jp^{u-l}_{j,N}\left[\mathcal{C}^j(\bar{s}_{k},\bar{a}_k)+\gamma \underline{V}_N^k(A^j\bar{s}_{k}+B^j\bar{a}_k+b^j)-\bar{\zeta}_k\right]^+, 
\end{aligned}
\end{equation}
 \begin{equation}\label{eq:q-k}
\begin{aligned}
q_k\in&\lambda_N\sum_{j=1}^J{p}^l_{j,N}\left[\partial_s\mathcal{C}^j(\bar{s}_{k},\bar{a}_k)+\gamma(A^j)^{\top}\partial_s\underline{V}_N^k(A^j\bar{s}_{k}+B^j\bar{a}_k+b^j)\right]\\
&\quad+\frac{1-\lambda_N}{1-\upsilon_N}\sum_{j=1}^Jp^{u-l}_{j,N}\theta_j\left[\partial_s\mathcal{C}^j(\bar{s}_{k},\bar{a}_k)+\gamma (A^j)^{\top}\partial_s\underline{V}_N^k(A^j\bar{s}_{k}+B^j\bar{a}_k+b^j)\right].
\end{aligned}
\end{equation}
\textbf{Update:} add the supporting cut $\ell_{k+1}(s)=\underline v_k+q_k^\top(s-\bar s_k)$ to approximate the value function and update the global underestimator by $\underline V_N^{k+1}:=\max\{\underline V_N^k,\ell_{k+1}\}$.

\STATE \textbf{Bellman operator residual:} $\displaystyle \Delta_{k+1}=\big\|\hat{\mathcal L}_N(\underline V_N^{k+1})-\underline V_N^{k+1}\big\|_\infty$.
\STATE \textbf{Next trial state:} randomly draw a sample $\xi^j$ and set $\bar{s}_{k+1}=A^j\bar{s}_{k}+B^j\bar{a}_k+b^j$.
\STATE $k\gets k+1$.
\ENDWHILE

\RETURN Approximate optimal value function $\underline{{V}}_N:=\underline{V}_N^{k}$ for the current episode $N$.
	\end{algorithmic}
\end{algorithm}

The BOCP algorithm can be regarded as an approximate value iteration procedure that iteratively refines a piecewise-linear lower approximation of the value function. At each step, it adds a supporting hyperplane to the epigraph of the Bellman operator's value function approximation.

\subsection{Convergence and sample complexity}
Conceptually, BOCP connects distributionally robust optimization with approximate dynamic programming at the operator level. In each iteration $k$, the algorithm evaluates the Bellman operator $\hat{\mathcal{L}}_N$ at ${\bar s}_k$ by solving the mean-risk master problem \eqref{eq:phi-k}. It then constructs a supporting cut, whose coefficients are determined by the optimal Karush-Kuhn-Tucker (KKT) multipliers $\mathfrak{a}^\star$, and incorporates this cut into the global lower envelope via \eqref{eq:v-k}-\eqref{eq:q-k}; see Lemma~\ref{lem:support}.

\begin{lemma}\label{lem:support}
Suppose Assumption \ref{ass-sddp} holds.
Let $(\bar a_k,\bar\zeta_k,\bar y)$ be the optimal solution determined through \eqref{eq:phi-k} and $\underline{v}_k$ be the optimal value at $\bar s_k$ given in \eqref{eq:v-k}. Then there exists a subgradient $q_k$ defined by \eqref{eq:q-k} such that for all $s\in\mathcal S$,
\[
\hat{\mathcal L}_N(\underline V_N^k)(s)\geq \underline{v}_k+q_k^\top(s-\bar s_k).
\]
Equivalently, the affine function $\ell_{k+1}(s):=\hat{\mathcal L}_N(\underline V_N^k)(\bar s_k)+q_k^\top(s-\bar s_k)$ is a supporting hyperplane of $\hat{\mathcal L}_N(\underline V_N^k)$ at $\bar s_k$, i.e., $\ell_{k+1}\le \hat{\mathcal L}_N(\underline V_N^k)$.
\end{lemma}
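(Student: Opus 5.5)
The plan is to view $\hat{\mathcal L}_N(\underline V_N^k)(s)$ as the optimal value of a jointly convex parametric program in $(s,a,\zeta)$ and to read off a subgradient in $s$ from its KKT multipliers.

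\textbf{Step 1: joint convexity.} Write
\[
\Phi(s,a,\zeta):=\lambda_N\sum_{j=1}^J p^l_{j,N}h_j(s,a)+(1-\lambda_N)\zeta+\frac{1-\lambda_N}{1-\upsilon_N}\sum_{j=1}^J p^{u-l}_{j,N}\big[h_j(s,a)-\zeta\big]^+,
\]
where $h_j(s,a):=\mathcal C^j(s,a)+\gamma\underline V_N^k(A^js+B^ja+b^j)$. By Theorem~\ref{thm-reformulation} and the Rockafellar--Uryasev representation, $\hat{\mathcal L}_N(\underline V_N^k)(s)=\inf_{a\in\mathcal A,\zeta}\Phi(s,a,\zeta)$. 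Each $h_j$ is jointly convex in $(s,a)$, because $\mathcal C^j$ is convex by Assumption~\ref{ass-sddp}(ii), and $\underline V_N^k$ is a maximum of affine cuts composed with an affine map. The weights $p^l_{j,N}$, $p^{u-l}_{j,N}$, $\lambda_N$ are nonnegative and $1-\upsilon_N>0$, and $[\cdot]^+$ is convex and nondecreasing. Hence $\Phi$ is jointly convex. Partial minimization over $(a,\zeta)$ then makes $\varphi(s):=\hat{\mathcal L}_N(\underline V_N^k)(s)$ convex on $\mathcal S$. It is finite because $\mathcal A$ is compact and the infimum over $\zeta$ is attained at a quantile.

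\textbf{Step 2: subgradient from a minimizer.} I use the standard fact that if $(\bar a_k,\bar\zeta_k)$ minimizes $\Phi(\bar s_k,\cdot,\cdot)$ and $(q,0,0)\in\partial\Phi(\bar s_k,\bar a_k,\bar\zeta_k)+\{0\}\times N_{\mathcal A}(\bar a_k)\times\{0\}$, then $q\in\partial\varphi(\bar s_k)$. This holds because, for any $s,a,\zeta$, we have $\Phi(s,a,\zeta)\ge\Phi(\bar s_k,\bar a_k,\bar\zeta_k)+q^\top(s-\bar s_k)$; taking the infimum over $(a,\zeta)$ gives $\varphi(s)\ge\underline v_k+q^\top(s-\bar s_k)$ with $\underline v_k=\Phi(\bar s_k,\bar a_k,\bar\zeta_k)=\varphi(\bar s_k)$, as in \eqref{eq:v-k}. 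This is exactly the claimed inequality and the supporting-hyperplane statement.

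\textbf{Step 3: identify $q_k$.} Apply the Moreau--Rockafellar sum rule and the chain rule for $[\cdot]^+\circ(h_j-\zeta)$. The subdifferential of $\Phi$ consists of vectors of the form
\[
\lambda_N\sum_j p^l_{j,N}\,g_j+\frac{1-\lambda_N}{1-\upsilon_N}\sum_j p^{u-l}_{j,N}\theta_j\,(g_j,-1),
\]
plus $(1-\lambda_N)$ in the $\zeta$-slot. Here $g_j\in\partial h_j(\bar s_k,\bar a_k)$ and $\theta_j\in\Theta_j$ as in \eqref{eq-theta-partial}. The $s$-block of $g_j$ is $\partial_s\mathcal C^j+\gamma(A^j)^\top\partial\underline V_N^k(s_j')$. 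The sum rule applies since all functions are finite convex near the point (the max-of-affine and $\mathcal C^j$ are finite on the relevant domain, possibly after extending $\mathcal C^j$ finitely). Setting the $\zeta$-component to zero gives $\sum_j p^{u-l}_{j,N}\theta_j=1-\upsilon_N$, which is the normalization announced before the lemma. The $a$-component must vanish modulo $N_{\mathcal A}(\bar a_k)$, which is exactly the optimality condition in $a$. The $s$-block then coincides with \eqref{eq:q-k}.

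\textbf{Main obstacle.} The delicate point is showing that optimality of $(\bar a_k,\bar\zeta_k)$ yields one common choice of $\theta_j$ and subgradients $g_j$ that simultaneously zero the $a$- and $\zeta$-blocks. Subgradients taken independently per block need not be jointly consistent. I will handle this through the LP-style master problem \eqref{eq:phi-k}. Its Lagrangian is convex, and Slater's condition holds (take $y_j$ large), so KKT multipliers $\mathfrak a_j^\star\ge0$ exist. Stationarity in $y_j$ gives $\mathfrak a_j^\star+\mathfrak b_j^\star=\frac{1-\lambda_N}{1-\upsilon_N}p^{u-l}_{j,N}$, so $\theta_j\in[0,1]$. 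Complementary slackness places $\theta_j$ in $\Theta_j$, and stationarity in $\zeta$ gives the normalization. Joint stationarity in $a$ comes from one optimality condition of the full problem, using the sum rule on the jointly convex Lagrangian. Then $q_k$ is the $s$-block of that same subgradient of the Lagrangian, and Step 2 applies with the Lagrangian in place of $\Phi$, since strong duality makes the Lagrangian dual function equal to $\varphi$.
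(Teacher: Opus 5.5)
Your proposal is correct and follows essentially the paper's route. The paper also writes $\hat{\mathcal L}_N(\underline V_N^k)(s)$ as a partial minimum over $(a,\zeta,y)$ of a jointly convex function, uses the master-problem KKT multipliers $\mathfrak a^\star,\mathfrak b^\star$ to assemble a joint subgradient $(q_k,0,0,0)$, and then takes the infimum; your Lagrangian-plus-$N_{\mathcal A}$ vector is exactly its $x^\star+(0,u_a^\star,0,0)+w^\star$ with $w^\star\in{\cal N}_{\mathcal K}$. One wording slip: with the multipliers frozen at their $\bar s_k$ values, the partially minimized Lagrangian is, by weak duality, only a minorant of $\hat{\mathcal L}_N(\underline V_N^k)$ that agrees with it at $\bar s_k$, not equal to it everywhere — but that is all the argument needs.
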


\begin{proof}
For any fixed episode $N$ and an iteration $k$, define 
\[
F_k(s):=\hat{\mathcal L}_N(\underline V_N^k)(s)
=\inf_{a,\zeta,y}\Phi_k(s,a,\zeta,y):=\varphi_k(s,a,\zeta,y) +\delta_{\mathcal A}(a)+\delta_{\mathcal K}(s,a,\zeta,y),
\]
where 
\[\varphi_k(s,a,\zeta,y):=\lambda_N\sum_{j=1}^J p_{j,N}^l\big[\mathcal C^j(s,a)+\gamma\underline V_N^k(A^j s+B^j a+b^j)\big]
+(1-\lambda_N)\zeta+\frac{1-\lambda_N}{1-\upsilon_N}\sum_{j=1}^J p_{j,N}^{u-l} y_j,
\]
and $\mathcal K:=\Big\{(s,a,\zeta,y):y_j\ge \mathcal G_j(s,a,\zeta):=\mathcal C^j(s,a)+\gamma\underline V_N^k(A^j s+B^j a+b^j)-\zeta,y_j\ge 0,\forall j\Big\}.$
Here $\delta_{\mathcal A}$ and $\delta_{\mathcal K}$ are indicator functions.
Specifically, $\delta_{\mathcal A}(x):=0$ if $x\in\mathcal A$ and $+\infty$ otherwise; $\delta_{\mathcal K}$ is defined analogously.
Let $(\bar a_k,\bar\zeta_k,\bar y)$ be an optimal solution of the master \eqref{eq:phi-k} at $\bar s_k$ and let $(\mathfrak{a}^\star,\mathfrak{b}^\star)\in\mathbb{R}_+^J\times\mathbb{R}_+^J$ be the associated optimal multipliers
such that the following KKT conditions hold:
\begin{align}
\text{\textbf{(stationarity in $a$)}}\quad
&\begin{aligned}
&0\in \partial_a\Big[\lambda_N\sum_{j=1}^J p_{j,N}^l\big(\mathcal C^j(\bar s_k,a)+\gamma \underline V_N^k(A^j\bar s_k+B^ja+b^j)\big)\\
&\quad +\sum_{j=1}^J \mathfrak{a}_j^\star \big(\mathcal C^j(\bar s_k,a)+\gamma \underline V_N^k(A^j\bar s_k+B^ja+b^j)\big)\Big]_{a=\bar a_k}+{\cal N}_{\mathcal A}(\bar a_k)
\end{aligned}\label{kkt-a}\\[4pt]
\text{\textbf{(stationarity in $\zeta$)}}\quad
& 0=(1-\lambda_N)-\sum_{j=1}^J \mathfrak{a}_j^\star \label{kkt-zeta}\\
\text{\textbf{(stationarity in $y$)}}\quad
& 0=\frac{1-\lambda_N}{1-\upsilon_N}p^{u-l}_{j,N}-\mathfrak{a}_j^\star-\mathfrak{b}_j^\star,\quad j=1,\dots,J \label{kkt-t}\\
\text{\textbf{(complementary slackness)}}\quad
& \mathfrak{a}_j^\star\big(\bar y_j-\mathcal G_j(\bar s_k,\bar a_k,\bar\zeta_k)\big)=0,\quad
\mathfrak{b}_j^\star\bar y_j=0,\quad j=1,\dots,J. \label{kkt-cs}
\end{align}
Here ${\cal N}_{\mathcal A}$ denotes the normal cone of $\mathcal A$.
Define
\[
h_j(s,a,\zeta,y):=\mathcal G_j(s,a,\zeta)-y_j,\quad
l_j(s,a,\zeta,y):=-y_j.
\]
Then the normal cone admits the subgradient representation
\[
\begin{aligned}
 {\cal N}_{\mathcal K}(\bar s_k,\bar a_k,\bar\zeta_k,\bar y)
=\Big\{\sum_{j=1}^J \mathfrak{a}_j \nu_j
+\sum_{j=1}^J \mathfrak{b}_j \eta_j: \mathfrak{a},\mathfrak{b}\in\mathbb R_+^J,\ 
\nu_j\in\partial h_j(\bar s_k,\bar a_k,\bar\zeta_k,\bar y),\
\eta_j\in\partial l_j(\bar s_k,\bar a_k,\bar\zeta_k,\bar y),\\
\mathfrak{a}_j h_j(\bar s_k,\bar a_k,\bar\zeta_k,\bar y)=0,\ \mathfrak{b}_j l_j(\bar s_k,\bar a_k,\bar\zeta_k,\bar y)=0\Big\}.
\end{aligned}
\]
For each $j$ and any $d_j\in \partial \underline V_N^k(A^j\bar s_k+B^j\bar a_k+b^j)$, the affine chain rule in \cite[Thm.~23.9]{rockafellar2015convex} yields
\[
\partial_s\big[\underline V_N^k(A^j s+B^j a+b^j)\big]_{(s,a)=(\bar s_k,\bar a_k)}
=(A^j)^\top d_j,\quad
\partial_a\big[\underline V_N^k(A^j s+B^j a+b^j)\big]_{(s,a)=(\bar s_k,\bar a_k)}
=(B^j)^\top d_j.
\]
Therefore, selecting
$\nu_j=\big(\partial_s\mathcal C^j(\bar s_k,\bar a_k)+\gamma (A^j)^{\top} d_j,
 \partial_a\mathcal C^j(\bar s_k,\bar a_k)+\gamma (B^j)^{\top} d_j,
 -1,
 -e_j\big)
\in\partial h_j(\bar s_k,\bar a_k,\bar\zeta_k,\bar y),$ and $\eta_j=(0,0,0,-e_j)\in\partial l_j(\bar s_k,\bar a_k,\bar\zeta_k,\bar y),$
where $e_j$ is the $j$-th unit vector in $\mathbb{R}^J$, we can deduce that
$w^\star:=\sum_{j=1}^J \mathfrak{a}_j^\star\nu_j
+\sum_{j=1}^J \mathfrak{b}_j^\star\eta_j
\in{\cal N}_{\mathcal K}(\bar s_k,\bar a_k,\bar\zeta_k,\bar y)$.

Since $\varphi_k$ is continuous at $(\bar s_k,\bar a_k,\bar\zeta_k,\bar y)$, the Moreau--Rockafellar subdifferential sum rule (see \cite[Thm.~23.8]{rockafellar2015convex}) gives
\[
 \partial \varphi_k(\bar s_k,\bar a_k,\bar\zeta_k,\bar y)
+ \partial \delta_{\mathcal A}(\bar a_k) + \partial \delta_{\mathcal K}(\bar s_k,\bar a_k,\bar\zeta_k,\bar y)\subset\partial \Phi_k(\bar s_k,\bar a_k,\bar\zeta_k,\bar y).
\]
Recall that individual blocks for $x:=(x_s,x_a,x_\zeta,x_y)\in\partial \varphi_k(\bar s_k,\bar a_k,\bar\zeta_k,\bar y)$ are 
\[
\begin{aligned}
x_s&=\lambda_N\sum_{j=1}^J p_{j,N}^l\big[\partial_s\mathcal C^j(\bar s_k,\bar a_k)+\gamma(A^j)^\top d_j\big],\\
x_a&=\lambda_N\sum_{j=1}^J p_{j,N}^l\big[\partial_a\mathcal C^j(\bar s_k,\bar a_k)+\gamma(B^j)^\top d_j\big],\\
x_\zeta&=(1-\lambda_N),\quad
x_y=\frac{1-\lambda_N}{1-\upsilon_N}(p^{u-l}_{1,N},\dots,p^{u-l}_{J,N}).
\end{aligned}
\]
We now show that there exist $x^\star:=(x_s^\star,x_a^\star,x_\zeta^\star,x_y^\star)\in\partial \varphi_k(\bar s_k,\bar a_k,\bar\zeta_k,\bar y)$ and
$u_a^\star\in {\cal N}_{\mathcal A}(\bar a_k)$ such that
\[x^\star+(0,u_a^\star,0,0)+w^\star=(q_k,0,0,0)\in \partial \Phi_k(\bar s_k,\bar a_k,\bar\zeta_k,\bar y).\]
\begin{itemize}
\item[(i)] \textbf{$y$-block:} For each $j$, by \eqref{kkt-t} we have that
$\big(x_y^\star+w^\star_y\big)_j=\frac{1-\lambda_N}{1-\upsilon_N}p^{u-l}_{j,N}-(\mathfrak{a}_j^\star+\mathfrak{b}_j^\star)=0.$
\item[(ii)] \textbf{$\zeta$-block:} By \eqref{kkt-zeta},
$x_\zeta^\star+w^\star_\zeta=(1-\lambda_N)-\sum_{j=1}^J\mathfrak{a}_j^\star=0$.
\item[(iii)] \textbf{$a$-block:} Define
\[
\bar\varphi(a):=\lambda_N\sum_{j=1}^J p_{j,N}^l\Big[\mathcal C^j(\bar s_k,a)+\gamma\underline V_N^k(A^j\bar s_k+B^j a+b^j)\Big]
+\sum_{j=1}^J \mathfrak{a}_j^\star\Big[\mathcal C^j(\bar s_k,a)+\gamma\underline V_N^k(A^j\bar s_k+B^j a+b^j)\Big].
\]
By \eqref{kkt-a}, $0\in\partial_a\bar\varphi(\bar a_k)+{\cal N}_{\mathcal A}(\bar a_k)$.
Hence there exist $r^\star_a\in\partial_a\bar\varphi(\bar a_k)$ and $u^\star_a\in {\cal N}_{\mathcal A}(\bar a_k)$ with $r^\star_a+u^\star_a=0$. This means that there exists $d_j^\star\in\partial \underline V_N^k(A^j\bar s_k+B^j\bar a_k+b^j)$ such that
\[
r^\star_a=\sum_{j=1}^J\big(\lambda_N p_{j,N}^l+\mathfrak{a}_j^\star\big)\big[\partial_a\mathcal C^j(\bar s_k,\bar a_k)+\gamma(B^j)^\top d^\star_j\big].
\]
Thus, we have $x_a^\star+w^\star_a=r^\star_a$ and then $x_a^\star+w^\star_a+u^\star_a=0$.

\item[(iv)] \textbf{$s$-block:}
With the consistent choice of $d^\star$ in (iii),
$x_s^\star+w^\star_s
=\lambda_N\sum_{j=1}^J p_{j,N}^l\big[\partial_s\mathcal C^j(\bar s_k,\bar a_k)+\gamma(A^j)^\top d^\star_j\big]
+\sum_{j=1}^J \mathfrak{a}_j^\star\big[\partial_s\mathcal C^j(\bar s_k,\bar a_k)+\gamma(A^j)^\top d^\star_j\big].$
Define $\theta_j:=\frac{(1-\upsilon_N)\mathfrak{a}_j^\star}{(1-\lambda_N)p^{u-l}_{j,N}}\in[0,1]$, then we have $\sum_{j=1}^J p^{u-l}_{j,N}\theta_j=1-\upsilon_N$ by \eqref{kkt-zeta}.
Thus 
\[
x_s^\star+w^\star_s
=\lambda_N\sum_{j=1}^J p_{j,N}^l\big[\partial_s\mathcal C^j(\bar s_k,\bar a_k)+\gamma(A^j)^\top d^\star_j\big]
+\frac{1-\lambda_N}{1-\upsilon_N}\sum_{j=1}^J p^{u-l}_{j,N}\theta_j\big[\partial_s\mathcal C^j(\bar s_k,\bar a_k)+\gamma(A^j)^\top d^\star_j\big]
=:q_k.\]
\end{itemize} 
Combining (i)-(iv) gives
$(q_k,0,0,0)\in\partial \Phi_k(\bar s_k,\bar a_k,\bar\zeta_k,\bar y).$ Therefore, for all $(s,a,\zeta,y)$, we have
$\Phi_k(s,a,\zeta,y)\ge\Phi_k(\bar s_k,\bar a_k,\bar\zeta_k,\bar y)+q_k^\top(s-\bar s_k).$
Taking the infimum over $(a,\zeta,y)$ yields
\[
F_k(s)=\inf_{a,\zeta,y}\Phi_k(s,a,\zeta,y)\ge\Phi_k(\bar s_k,\bar a_k,\bar\zeta_k,\bar y)+q_k^\top(s-\bar s_k)
=F_k(\bar s_k)+q_k^\top(s-\bar s_k).
\]
Therefore, $q_k\in\partial_s F_k(\bar s_k)=\partial_s\hat{\mathcal L}_N(\underline V_N^k)(\bar s_k)$. Since $\underline v_k:=F_k(\bar s_k)$, we obtain 
\[
\hat{\mathcal L}_N(\underline V_N^k)(s)\ge\underline v_k+q_k^\top(s-\bar s_k),\quad \forall s\in\mathcal S.
\]
This completes the proof.
\end{proof}

By Lemma~\ref{lem:support}, the new cut satisfies
$\ell_{k+1}\le \hat{\mathcal L}_N(\underline V_N^k)$.
Since $\hat{\mathcal L}_N$ is monotone and $\underline V_N^k\le \hat V_N^*$ (by induction),
we have
\[
\underline V_N^{k+1}=\max\{\underline V_N^k,\ell_{k+1}\}
\le \max\{\hat V_N^*, \hat{\mathcal L}_N(\underline V_N^k)\}
\le \hat V_N^*,
\]
so the envelope remains a global underestimator.

\begin{lemma}[Monotone lower bounds]\label{lem:valid}
Suppose Assumption \ref{ass-sddp} holds. Then the BOCP update produces a sequence $\{\underline V_N^k\}_{k\ge0}$ such that, for all $k$,
\[
\underline V_N^k\leq\underline V_N^{k+1}\leq \hat V_N^*.
\]
In particular, the pointwise limit $\underline V_N^\infty(s):=\lim_{k\to\infty}\underline V_N^k(s)$ exists and satisfies $\underline V_N^\infty\le \hat V_N^*$.
\end{lemma}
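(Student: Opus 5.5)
The argument is an induction on $k$ built from two facts: Lemma~\ref{lem:support}, and the monotonicity of $\hat{\mathcal L}_N$. The lower inequality $\underline V_N^k\le\underline V_N^{k+1}$ needs no work, since the update in Algorithm~\ref{alg:B} defines $\underline V_N^{k+1}=\max\{\underline V_N^k,\ell_{k+1}\}$ pointwise. The real content is the upper bound $\underline V_N^k\le \hat V_N^*$ for every $k$.

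\textbf{Preliminary: monotonicity of $\hat{\mathcal L}_N$.} Take $V\le V'$ pointwise on $\mathcal S$. For every $(s,a)$ and every $P\in\mathfrak P(\hat P_N,\hat r_N)$,
\[
\mathbb E_P[\mathcal C+\gamma V(g)]\le \mathbb E_P[\mathcal C+\gamma V'(g)].
\]
Taking the supremum over $P$ and then the infimum over $a$ preserves this inequality. The argument is the same as Lemma~\ref{lem-mon}; equivalently, it follows from monotonicity of the coherent risk measure $\rho_N$.

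\textbf{Base case.} Here $\underline V_N^0\equiv-\bar{\mathcal C}/(1-\gamma)$. In the sup-norm setting of this section, Lemma~\ref{lem:V_weight_bounded} with $w\equiv1$ gives $\|\hat V_N^*\|_\infty\le \bar{\mathcal C}/(1-\gamma)$. Hence $\hat V_N^*\ge -\bar{\mathcal C}/(1-\gamma)=\underline V_N^0$.

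\textbf{Inductive step.} Assume $\underline V_N^k\le\hat V_N^*$.
\begin{itemize}
\item By Lemma~\ref{lem:support}, the new cut satisfies $\ell_{k+1}\le \hat{\mathcal L}_N(\underline V_N^k)$ on $\mathcal S$.
\item By monotonicity and the fixed-point identity, $\hat{\mathcal L}_N(\underline V_N^k)\le \hat{\mathcal L}_N(\hat V_N^*)=\hat V_N^*$.
\item Therefore $\underline V_N^{k+1}=\max\{\underline V_N^k,\ell_{k+1}\}\le \hat V_N^*$.
\end{itemize}

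\textbf{Main obstacle.} The delicate point is that the cut is anchored at $\underline v_k$, the optimal value of the master problem \eqref{eq:phi-k}. We must ensure this equals $\hat{\mathcal L}_N(\underline V_N^k)(\bar s_k)$, and not merely some value attained by a suboptimal choice of $(\bar a_k,\bar\zeta_k)$.

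To settle this, I would invoke Theorem~\ref{thm-reformulation} together with the Rockafellar--Uryasev representation. Minimizing over $\zeta$ and $y$ in \eqref{eq:phi-k} reproduces exactly $\rho_N[\mathcal C(\bar s_k,a,\xi)+\gamma\underline V_N^k(g)]$. Minimizing then over $a$ gives $\hat{\mathcal L}_N(\underline V_N^k)(\bar s_k)$.

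I also need the master problem to attain its minimum and to have KKT multipliers, since Lemma~\ref{lem:support} uses them. Attainment comes from compactness of $\mathcal A$, convexity from Assumption~\ref{ass-sddp}, and the fact that $\underline V_N^k$ is a finite max of affine functions. Multipliers exist because the constraints admit a Slater point: take $y_j$ large.

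\textbf{Limit.} For each $s$, the sequence $\{\underline V_N^k(s)\}_k$ is nondecreasing and bounded above by $\hat V_N^*(s)<\infty$. It therefore converges to some $\underline V_N^\infty(s)$, and passing to the limit in $\underline V_N^k(s)\le\hat V_N^*(s)$ gives $\underline V_N^\infty\le \hat V_N^*$.
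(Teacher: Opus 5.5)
Your proof is correct and follows the paper's own argument: induction on $k$, using Lemma~\ref{lem:support} for $\ell_{k+1}\le\hat{\mathcal L}_N(\underline V_N^k)$, then monotonicity of $\hat{\mathcal L}_N$ with $\hat{\mathcal L}_N(\hat V_N^*)=\hat V_N^*$, then monotone convergence for the pointwise limit. Your base-case justification via Lemma~\ref{lem:V_weight_bounded} supplies a step the paper only asserts, while your ``main obstacle'' (optimality of the master problem and existence of multipliers) is already covered by the hypotheses and proof of Lemma~\ref{lem:support}, so it need not be redone here.
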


\begin{proof}
 We prove the monotonicity by induction from $k=0$. 
 Since $\underline V_N^0\le \hat V_N^*$,
by monotonicity of $\hat{\mathcal L}_N$, we have 
\[
\ell_{1}\le\hat{\mathcal L}_N(\underline V_N^0)\leq\hat{\mathcal L}_N(\hat V_N^*)=\hat V_N^*.
\]
where the first inequality follows from Lemma \ref{lem:support}.
Hence $\underline V_N^{1}=\max\{\underline V_N^0,\ell_{1}\}\le \hat V_N^*$ and $\underline V_N^{1}\ge \underline V_N^0$ pointwise. Analogous to the above proof for $k=0$, we can derive the result for all $k$.
Since 
$\underline V_N^k$ is nondecreasing in $k$ and bounded above by $\hat V_N^*$ for each $s$, the pointwise limit $\underline V_N^\infty(s):=\lim_{k\to\infty}\underline V_N^k(s)$ exists.
\end{proof}

To show the convergence of the BOCP algorithm, we need the following assumption.

\begin{assumption}\label{ass:dense}
For any fixed episode $N$, assume that the sequence $\{\bar s_k\}$ generated in Algorithm~\ref{alg:B} satisfies that for every $s\in\mathcal S$, $\varepsilon>0$ and $K\in\mathbb{N}$, there exists an index $k\geq K$ such that $\|\bar s_k-s\|<\varepsilon$ a.s.
\end{assumption}

Assumption \ref{ass:dense} ensures that cuts are generated throughout the state space, guaranteeing uniform convergence. When $\mathcal{S}$ is finite, Assumption \ref{ass:dense} reduces to the standard requirement that each trial state is visited infinitely many times a.s., which is commonly used in \cite{fullner2025stochastic,philpott2008convergence}.

\begin{theorem}[Convergence of the BOCP algorithm]\label{thm:cp-episode}
Suppose that Assumptions~\ref{ass-sddp}-\ref{ass:dense} hold. For any fixed episode $N$, if $\underline V_N^\infty$ is continuous on $\mathcal S$, then $\{\underline V_N^k\}$ generated by Algorithm~\ref{alg:B} converges to $\hat V_N^*$, that is, as $k\to\infty$,
\[
\underline V_N^k(s)\uparrow \hat V_N^*(s)\quad \text{for all } s\in\mathcal S,
\text{ and }
\|\underline V_N^k-\hat V_N^*\|_\infty\to0.
\]
\end{theorem}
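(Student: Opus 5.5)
The plan is to show that the monotone limit $\underline V_N^\infty$ from Lemma~\ref{lem:valid} is a fixed point of $\hat{\mathcal L}_N$. Uniqueness of the fixed point (Lemma~\ref{lem-beo-3} with $w\equiv1$) then forces $\underline V_N^\infty=\hat V_N^*$. Once pointwise convergence is established, Dini's theorem upgrades it to uniform convergence. Indeed, $\mathcal S$ is compact, each $\underline V_N^k$ is continuous as a finite maximum of affine functions, the sequence is nondecreasing, and the limit $\hat V_N^*$ is continuous by hypothesis on $\underline V_N^\infty$. So the real work is proving $\underline V_N^\infty=\hat{\mathcal L}_N(\underline V_N^\infty)$.

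The first step is the inequality $\underline V_N^\infty\le \hat{\mathcal L}_N(\underline V_N^\infty)$. I would prove by induction that $\underline V_N^k\le \hat{\mathcal L}_N(\underline V_N^k)$. The base case is the constant $-\bar{\mathcal C}/(1-\gamma)$, whose image under $\hat{\mathcal L}_N$ is at least $-\bar{\mathcal C}-\gamma\bar{\mathcal C}/(1-\gamma)=-\bar{\mathcal C}/(1-\gamma)$. For the inductive step, each new cut satisfies $\ell_{k+1}\le\hat{\mathcal L}_N(\underline V_N^k)\le \hat{\mathcal L}_N(\underline V_N^{k+1})$ by Lemma~\ref{lem:support} and monotonicity, and each old piece is at most $\hat{\mathcal L}_N(\underline V_N^k)\le\hat{\mathcal L}_N(\underline V_N^{k+1})$. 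Passing to the limit needs $\hat{\mathcal L}_N(\underline V_N^k)\to\hat{\mathcal L}_N(\underline V_N^\infty)$ pointwise. This convergence is uniform once $\underline V_N^k\to\underline V_N^\infty$ uniformly, which Dini already gives because $\underline V_N^\infty$ is assumed continuous. The contraction property then yields $\|\hat{\mathcal L}_N\underline V_N^k-\hat{\mathcal L}_N\underline V_N^\infty\|_\infty\le\gamma\|\underline V_N^k-\underline V_N^\infty\|_\infty\to0$.

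The second step, the reverse inequality $\hat{\mathcal L}_N(\underline V_N^\infty)\le\underline V_N^\infty$, is the main obstacle and is where Assumption~\ref{ass:dense} enters. Fix $s\in\mathcal S$ and $\varepsilon>0$. Choose $K$ so that $\|\underline V_N^k-\underline V_N^\infty\|_\infty<\varepsilon$ for $k\ge K$. Then pick $k\ge K$ with $\|\bar s_k-s\|<\delta$.

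The key estimate compares the cut value with the limit at $s$:
\[
\underline V_N^\infty(s)\ge\underline V_N^{k+1}(s)\ge \ell_{k+1}(s)=\hat{\mathcal L}_N(\underline V_N^k)(\bar s_k)+q_k^\top(s-\bar s_k).
\]
To control the last term I need a uniform bound on $\|q_k\|$. My first attempt would exploit the fact that $q_k$ is a subgradient of the convex function $\hat{\mathcal L}_N(\underline V_N^k)$, whose values are bounded by $\bar{\mathcal C}/(1-\gamma)$ uniformly in $k$. That only bounds subgradients on a strictly interior compact subset, since they may blow up at the boundary of $\mathcal S$. I would therefore add a Lipschitz assumption on $\mathcal C^j$ in $s$, or restrict to interior points and extend to the boundary by continuity. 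Alternatively, the subgradient formula \eqref{eq:q-k} bounds $\|q_k\|$ recursively through the slopes of earlier cuts and $\|A^j\|$, provided $\gamma\max_j\|A^j\|<1$.

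Granting $\|q_k\|\le M$, I would combine the pieces. Continuity of $\hat{\mathcal L}_N(\underline V_N^\infty)$, which is convex and continuous on compact $\mathcal S$, together with the $\gamma\varepsilon$ closeness of $\hat{\mathcal L}_N(\underline V_N^k)$ to $\hat{\mathcal L}_N(\underline V_N^\infty)$, gives
\[
\underline V_N^\infty(s)\ge \hat{\mathcal L}_N(\underline V_N^\infty)(s)-\gamma\varepsilon-o_\delta(1)-M\delta .
\]
Letting $\delta,\varepsilon\to0$ closes the fixed-point argument and completes the proof.
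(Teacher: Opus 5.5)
Your argument is correct once you commit to the ``interior points, then extend by continuity'' option. In the reverse inequality, however, it takes a detour that the paper avoids.

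You evaluate the new cut at the target point $s$. This brings in the slope term $q_k^\top(s-\bar s_k)$ and forces you to control $\|q_k\|$. The paper instead evaluates at the anchor. Since $\ell_{k+1}(\bar s_k)=\hat{\mathcal L}_N(\underline V_N^k)(\bar s_k)$, one has $\underline V_N^\infty(\bar s_{k_j})\ge\underline V_N^{k_j+1}(\bar s_{k_j})\ge\hat{\mathcal L}_N(\underline V_N^{k_j})(\bar s_{k_j})$, with no slope term at all. The paper then lets $\bar s_{k_j}\to s$ using three facts:
\begin{itemize}
\item the assumed continuity of $\underline V_N^\infty$;
\item the uniform convergence $\hat{\mathcal L}_N(\underline V_N^{k})\to\hat{\mathcal L}_N(\underline V_N^\infty)$, from Dini plus contraction, exactly as you have;
\item continuity of $\hat{\mathcal L}_N(\underline V_N^\infty)$.
\end{itemize}
You already use continuity of $\underline V_N^\infty$ for Dini, so the slope bound buys nothing.

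Of your three fixes, only the interior one proves the theorem as stated. A Lipschitz condition on $\mathcal C^j$, or $\gamma\max_j\|A^j\|<1$, would be extra hypotheses. The interior version does work. Set $B:=\bar{\mathcal C}/(1-\gamma)$. Then $-B\le\underline V_N^k\le\hat V_N^*$, so $|\hat{\mathcal L}_N(\underline V_N^k)|\le B$ on $\mathcal S$ uniformly in $k$. Now suppose a ball of radius $r$ around $\bar s_k$ lies in $\mathcal S$. Applying the cut inequality of Lemma~\ref{lem:support} at $\bar s_k+r\,q_k/\|q_k\|$ gives $\|q_k\|\le 2B/r$; convexity is not needed for this. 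If $\mathcal S$ has empty interior, use the relative interior.

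The extension to the boundary again rests on continuity of $\underline V_N^\infty$ and of $\hat{\mathcal L}_N(\underline V_N^\infty)$. The latter should be justified by Berge's theorem (compact $\mathcal A$, continuous data, continuous $\underline V_N^\infty$), not by convexity. A convex function on a compact convex set need not be continuous at boundary points.

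Your first step is also longer than necessary. Since $\underline V_N^k\le\underline V_N^\infty$, monotonicity gives $\ell_{k+1}\le\hat{\mathcal L}_N(\underline V_N^k)\le\hat{\mathcal L}_N(\underline V_N^\infty)$ directly. No induction or limit passage is needed, and this is the paper's argument. Your base-case computation $\underline V_N^0\le\hat{\mathcal L}_N(\underline V_N^0)$ supplies the remaining piece, $\underline V_N^0\le\hat{\mathcal L}_N(\underline V_N^\infty)$.
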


\begin{proof}
Let $\mathfrak C^\infty$ be the collection of all generated cuts, i.e., $\mathfrak C^\infty=\{\ell_{k+1}\}_{k\ge0}$. By construction,
\[
\underline V_N^k=\max\big\{\underline V_N^0,\ell_1,\dots,\ell_k\big\},\ 
\underline V_N^\infty=\sup\big\{\underline V_N^0,\ell:\ell\in\mathfrak C^\infty\big\}.
\]
For each $\ell_{k+1}\in\mathfrak C^\infty$, we have $\ell_{k+1}\le \hat{\mathcal L}_N(\underline V_N^k)$ by Lemma~\ref{lem:support}. Moreover, by monotonicity of $\hat{\mathcal L}_N$ and $\underline V_N^k\le \underline V_N^\infty$, it follows that
\[
\ell_{k+1}\le\hat{\mathcal L}_N(\underline V_N^k)\le\hat{\mathcal L}_N(\underline V_N^\infty).
\]
Taking the supremum over all cuts with $\underline V_N^0\le \hat{\mathcal L}_N(\underline V_N^\infty)$ yields
\begin{equation}\label{eq:thm-alg-con-1}
\underline V_N^\infty\le\hat{\mathcal L}_N(\underline V_N^\infty).
\end{equation}

At each iteration $k$, by the definition of $\ell_{k+1}$, we have $\ell_{k+1}(\bar s_k)=\hat{\mathcal L}_N(\underline V_N^k)(\bar s_k)$,
hence
\begin{equation}\label{eq:thm-alg-con-2}
 \underline V_N^{k+1}(\bar s_k)=\max\{\underline V_N^k(\bar s_k),\ell_{k+1}(\bar s_k)\}\ge\hat{\mathcal L}_N(\underline V_N^k)(\bar s_k).
\end{equation}
For any fixed $s\in\mathcal S$, Assumption~\ref{ass:dense} ensures that there exists a subsequence $\{\bar s_{k_j}\}$ such that $\bar s_{k_j}\to s$ as $j\to\infty$. 
Taking $k=k_j$ in \eqref{eq:thm-alg-con-2} and using $\underline V_N^{k_j+1}\le \underline V_N^\infty$ yields
\begin{equation}\label{eq-5-11}
 \underline V_N^\infty(\bar s_{k_j})
\ge \underline V_N^{k_j+1}(\bar s_{k_j})
\ge \hat{\mathcal L}_N(\underline V_N^{k_j})(\bar s_{k_j}). 
\end{equation}
Next we justify that $\hat{\mathcal L}_N(\underline V_N^{k_j})(\bar s_{k_j})\to \hat{\mathcal L}_N(\underline V_N^\infty)(\bar s_{k_j})$.
Since each $\underline V_N^k$ is continuous and
$\underline V_N^k\uparrow \underline V_N^\infty$ pointwise on the compact set $\mathcal S$,
with $\underline V_N^\infty$ continuous by assumption, Dini's theorem implies
\[
\|\underline V_N^k-\underline V_N^\infty\|_\infty\to0.
\]
Moreover, $\hat{\mathcal L}_N$ is a $\gamma$-contraction under $\|\cdot\|_\infty$, hence 
\begin{equation}\label{eq-5-12}
 \|\hat{\mathcal L}_N(\underline V_N^{k})-\hat{\mathcal L}_N(\underline V_N^\infty)\|_\infty
\le \gamma \|\underline V_N^{k}-\underline V_N^\infty\|_\infty\to0.
\end{equation}
Combining \eqref{eq-5-11} and \eqref{eq-5-12} with the continuity of both $\underline V_N^\infty$ and $\hat{\mathcal L}_N(\underline V_N^\infty)$ on $\mathcal S$, letting $j\to\infty$ gives
$\underline V_N^\infty(s)\ge \hat{\mathcal L}_N(\underline V_N^\infty)(s)$.
Since $s$ is arbitrary, this yields
\begin{equation}\label{eq:thm-alg-con-3}
 \underline V_N^\infty\ge\hat{\mathcal L}_N(\underline V_N^\infty).
\end{equation}
From \eqref{eq:thm-alg-con-1} and \eqref{eq:thm-alg-con-3}, we have $\underline V_N^\infty=\hat{\mathcal L}_N(\underline V_N^\infty)$, i.e., $\underline V_N^\infty$ is a fixed point of $\hat{\mathcal L}_N$. By Lemma~\ref{lem-beo-3}, $\hat{\mathcal L}_N$ admits a unique fixed point $\hat V_N^*$. Therefore $\underline V_N^\infty=\hat V_N^*$. Recalling that we have already shown $\|\underline V_N^k-\underline V_N^\infty\|_\infty\to0$ by Dini's theorem, we conclude that
$\|\underline V_N^k-\hat V_N^*\|_\infty\to0.$
\end{proof}

\begin{remark}
 A sufficient condition for the continuity of $\underline V_N^\infty$ is a uniform bound on cut slopes:
assume there exists $L<\infty$ such that $\|q_\eta\|\le L$ for all cuts $\ell_\eta(s)=v_\eta+q_\eta^\top(s-\bar s_\eta)$.
Then each $\underline V_N^k=\max_{\eta\le k}\ell_\eta$ is $L$-Lipschitz on $\mathcal S$, hence the pointwise limit
$\underline V_N^\infty=\sup_k \underline V_N^k$ is also $L$-Lipschitz and therefore continuous.
Such a bound is standard under Lipschitz regularity: if $\mathcal C^j(\cdot,a)$ is $L_{\cal C}$-Lipschitz in $s$ uniformly in $a$ and
$\|A^j\|\le L_g$ with $\gamma L_g<1$, then the fixed point $\hat V_N^*$ is Lipschitz with constant $\frac{L_{\cal C}}{1-\gamma L_g}$ (see Lemma \ref{prop-lip}),
and the BOCP subgradients $q_k$ remain uniformly bounded.
\end{remark}

As shown in Lemma \ref{lem:support}, the BOCP algorithm builds operator-level cuts for the Bellman operator $\hat{\cal L}_N$ and maintains a single global cut pool for the stationary value function.
Theorem~\ref{thm:cp-episode} demonstrates that BOCP realizes a fixed-point computation of the Bayesian DROC Bellman equation since the global lower envelope increases monotonically and converges uniformly on $\cal S$ to the unique fixed point $\hat{V}_N^*$. Thus BOCP provides a convergent and computationally tractable method for solving the infinite-horizon Bayesian DROC Bellman equation.

Recall that Algorithm~\ref{alg:B} provides the solution of \eqref{drmdp2} for fixed episode $N$, we now consider the sample complexity of Algorithm~\ref{alg:A} by determining a stopping episode $N_{\mathrm{max}}$, which is based on Proposition~\ref{prop-N-finite}.

\begin{proposition}[Sample complexity]\label{prop-sample-complexity}
Suppose Assumption~\ref{ass-sddp} holds. For any fixed $\varepsilon>0$ and $\delta\in(0,1)$, 
when $N_{\mathrm{max}}:= O\left(\frac{J\log J+\log(1/\delta)}{\varepsilon^2}\right)$,
we have
\[
\mathbb P \left( \big\|\hat V_{N_{\mathrm{max}}}^*-V^*\big\|_\infty<\varepsilon \right) \ge 1-\delta.
\]
\end{proposition}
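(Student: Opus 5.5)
The plan is to specialize Proposition~\ref{prop-N-finite} to the bounded-cost setting of Section~5 and read off the order of $N(J,\varepsilon,\delta)$. Under Assumption~\ref{ass-sddp}, $\mathcal S$ and $\mathcal A$ are compact, $\mathcal C(\cdot,\cdot,\xi)$ is continuous and $\Xi$ is finite, so $\bar{\mathcal C}<\infty$. We therefore take $w\equiv 1$, which gives $\bar{\mathcal C}_w=\bar{\mathcal C}$, $\kappa_w=1$ and $\gamma_w=\gamma<1$. Assumption~\ref{ass1} then holds trivially: the drift condition $w(g)\le w$ is immediate, and $g$ is continuous because it is affine in $(s,a)$. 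Hence Proposition~\ref{prop-N-finite} applies with $\|\cdot\|_w=\|\cdot\|_\infty$, and it suffices to show $N(J,\varepsilon,\delta)=\max\{N_1,N_2,N_3\}=O\big((J\log J+\log(1/\delta))/\varepsilon^2\big)$, treating $\bar{\mathcal C}$, $\gamma$, $\alpha$ and the prior $\tau_0$ as constants.

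We bound each of the three thresholds in turn.
\begin{itemize}
\item $N_1$ is linear in $\sum_k\tau_{k,0}-J$ times $1/\varepsilon$, since $\|\hat P_0-P^c\|_1\le 2$. It is therefore $O(1/\varepsilon)$, which is $O(1/\varepsilon^2)$ for $\varepsilon\le 1$ (for $\varepsilon>1$ the statement is easier, as the bound only decreases).
\item $N_2=\frac{18\bar{\mathcal C}^2}{(1-\gamma)^4}\cdot\frac{(J+1)\log 2+\log(1/\delta)}{\varepsilon^2}=O\big((J+\log(1/\delta))/\varepsilon^2\big)$.
\item $N_3\le \big(6\bar{\mathcal C}z_{1-\alpha/(2J)}\sqrt{J-1}/((1-\gamma)^2\varepsilon)\big)^2=O\big(J\,z^2_{1-\alpha/(2J)}/\varepsilon^2\big)$.
\end{itemize}

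The main step is controlling the Bonferroni quantile $z_{1-\alpha/(2J)}$. We will use the Gaussian tail bound $1-\Phi(t)\le e^{-t^2/2}$. This gives $z_{1-\alpha'/2}^2\le 2\log(2J/\alpha)=O(\log J)$ for fixed $\alpha$, so $N_3=O(J\log J/\varepsilon^2)$. Taking the maximum of the three bounds yields $N_{\max}=O\big((J\log J+\log(1/\delta))/\varepsilon^2\big)$.

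Finally, we apply Proposition~\ref{prop-N-finite} at $N=N_{\max}\ge N(J,\varepsilon,\delta)$ to obtain $\mathbb P(\|\hat V^*_{N_{\max}}-V^*\|_\infty<\varepsilon)\ge 1-\delta$. The only delicate point is making explicit which quantities are hidden in the $O(\cdot)$: $\bar{\mathcal C}$, $(1-\gamma)^{-4}$, $\alpha$, and the prior mass $\sum_k\tau_{k,0}-J$, which enters only through $N_1$ and is lower order.
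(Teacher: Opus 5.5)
Your proposal is correct and follows the paper's proof. Both specialize Proposition~\ref{prop-N-finite} to the bounded-cost case $w\equiv 1$ and read off $N_{\max}=\max\{N_1,N_2,N_3\}$, with the $J\log J$ term coming from $N_3$ through $z_{1-\alpha/(2J)}^2=O(\log J)$. Your version is more explicit: you check Assumption~\ref{ass1} from Assumption~\ref{ass-sddp} and give the Gaussian-tail bound $z_{1-\alpha/(2J)}^2\le 2\log(2J/\alpha)$. The paper instead writes $N_1=c_1J/\varepsilon$, treating the prior mass $\sum_k\tau_{k,0}-J$ as order $J$, whereas you treat it as a constant and get $O(1/\varepsilon)$. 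That difference is immaterial, since either form is dominated by $J\log J/\varepsilon^2$ for $\varepsilon\le 1$.
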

\begin{proof}
By Proposition~\ref{prop-N-finite},
for any fixed $\varepsilon>0$ and $\delta\in(0,1)$, there exist $c_1,\dots,c_3>0$ independent of $J,\varepsilon,\delta$ such that
\[
 N(J,\varepsilon,\delta):= \max\Big\{
\underbrace{c_1\tfrac{J}{\varepsilon}}_{ N_1(J,\varepsilon)},
\ \underbrace{c_2\tfrac{J+\log(1/\delta)}{\varepsilon^2}}_{ N_2(J,\varepsilon,\delta)},
\ \underbrace{c_3\tfrac{J\log J}{\varepsilon^2}}_{N_3(J,\varepsilon)}
\Big\}
\]
ensures $\mathbb P \left( \big\|\hat V_N^*-V^*\big\|_\infty<\varepsilon \right) \ge 1-\delta
$ for all $N\ge N(J,\varepsilon,\delta)$.
Therefore, picking $N_{\mathrm{max}}= O\left(\frac{J\log J+\log(1/\delta)}{\varepsilon^2}\right)$ yields the claim.
\end{proof}

\begin{remark}
The stopping criterion 
$\Delta_{k}:=\big\|\hat{\mathcal L}_N(\underline V_N^{k})-\underline V_N^{k}\big\|_\infty \le (1-\gamma)\varepsilon$ 
ensures
\(
\|\underline V_N^{k}-\hat V_N^*\|_\infty
\le \|\hat{\mathcal L}_N(\underline V_N^{k})-\underline V_N^{k}\|_\infty/(1-\gamma)\le \varepsilon
\). Combining this with Proposition~\ref{prop-sample-complexity} gives
\[
\mathbb P \left( \big\|\underline V_{N_{\mathrm{max}}}^{k}-V^*\big\|_\infty<2\varepsilon \right) \ge 1-\delta.
\]
This stopping test is an ideal (function-space) criterion on continuous state spaces. In practice, one typically evaluates the Bellman residual on a discrete subset of $\cal S$. The quantitative stability results for this kind of discretization can be found in \cite{ma2024bayesian}.
\end{remark}

To evaluate the performance of the corresponding policy induced by $\underline{{V}}_N(s)$, we compute state-control pairs sequentially in forward time starting with initial state $\hat{s}_1$. At stage $t=1, \ldots$, given the current state $\hat{s}_t$, the corresponding control $\hat{a}_t$ is determined by
$$
\begin{aligned}
 (\hat{a}_t,\hat{\zeta}_t) \in \underset{a\in\cal A,\zeta}{\arg \min } &\lambda_N\sum_{j=1}^J{p}^l_{j,N}\left[\mathcal{C}^j(\hat{s}_t,a)+\gamma \underline{V}_N(A^j\hat{s}_t+B^ja+b^j)\right]+(1-\lambda_N)\zeta\\
 &+\frac{1-\lambda_N}{1-\upsilon_N}\sum_{j=1}^Jp^{u-l}_{j,N}\left[\mathcal{C}^j(\hat{s}_t,a)+\gamma \underline{V}_N(A^j\hat{s}_t+B^ja+b^j)-\zeta\right]^+.
\end{aligned}
$$
By drawing an i.i.d. sample path $\{\hat{\xi}_t, t=1, \ldots\}$, we can determine the next state $\hat{s}_{t+1}=A\left(\hat{\xi}_t\right) \hat{s}_t+B\left(\hat{\xi}_t\right) \hat{a}_t+b\left(\hat{\xi}_t\right)$.
With this sample path,
an unbiased estimator of the policy value is:
\begin{equation}\label{policy-evaluate}
 \sum_{t=1}^{\infty} \gamma^{t-1} \mathcal{C}\left(\hat{s}_t, \hat{a}_t, \hat{\xi}_t\right). 
\end{equation}
In practice, we can use the truncation at a finite horizon $T$ to approximate the infinite sum in \eqref{policy-evaluate}, where the truncation error satisfies:
$$
\left|\sum_{t=1}^\infty \gamma^{t-1} \mathcal{C}\left(\hat{s}_t, \hat{a}_t, \hat{\xi}_t\right)-\sum_{t=1}^T \gamma^{t-1} \mathcal{C}\left(\hat{s}_t, \hat{a}_t, \hat{\xi}_t\right)\right| \leq(1-\gamma)^{-1} \gamma^T \bar{\mathcal{C}} .
$$
Equivalently, it suffices to choose $T\geq \frac{\log (\epsilon(1-\gamma)/\bar{\mathcal{C}})}{\log \gamma} ,$ which guarantees that the truncation error does not exceed the prescribed tolerance $\epsilon$.

\subsection{Episodic warm start}\label{subsec:warmstart}
Algorithm~\ref{alg:B} solves the Bayesian DROC Bellman equation for a fixed episode $N$, where the posterior-induced risk functional $\rho_N$
and hence the Bellman operator $\hat{\mathcal L}_N$ are fixed.
In the multi-episode setting, instead of restarting BOCP from scratch at each episode, it is natural to warm-start episode \(N{+}1\) using the cut pool obtained at episode \(N\).

The key difficulty is that the Bellman operator changes across episodes:
$\hat{\mathcal L}_N \neq \hat{\mathcal L}_{N+1}$ because $(\lambda_N,\upsilon_N,P_N^l,P_N^{u-l})$ depend on the updated posterior.
Therefore, a lower approximation $\underline V_N$ that underestimates $\hat V_N^*$ does not automatically remain a valid lower bound for $\hat V_{N+1}^*$.
To safely reuse cuts, we first provide the following standard result for the episodic Bellman operator $\hat{\mathcal L}_{N+1}$.

\begin{lemma}[Theorem 1, \cite{ma2024bayesian}]\label{lem:warm-sub}
If a bounded function $V$ satisfies $
V \le \hat{\mathcal L}_{N+1}(V),$ then $V \le \hat V_{N+1}^*$.
Similarly, if $V \ge \hat{\mathcal L}_{N+1}(V)$, then $V \ge \hat V_{N+1}^*$.
\end{lemma}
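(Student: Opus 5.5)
The plan is the standard monotone-iteration argument built on two facts already available. First, $\hat{\mathcal L}_{N+1}$ is monotone: $V\le V'$ pointwise gives $\hat{\mathcal L}_{N+1}(V)\le\hat{\mathcal L}_{N+1}(V')$. Second, it is a contraction. Monotonicity holds because $\rho_{N+1}$ is a coherent risk measure, hence monotone, and because taking the infimum over $a\in\mathcal A$ preserves pointwise inequalities. It can be seen equally from the primal form, since $\sup_{P\in\mathfrak P(\hat P_{N+1},\hat r_{N+1})}\mathbb E_P[\cdot]$ is monotone in its integrand. Contraction is Lemma~\ref{lem-beo-3}, which in the bounded setting of this section (with $w\equiv1$) makes $\hat{\mathcal L}_{N+1}$ a $\gamma$-contraction under $\|\cdot\|_\infty$ with unique fixed point $\hat V_{N+1}^*$.

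For the first claim, assume $V\le\hat{\mathcal L}_{N+1}(V)$. Applying $\hat{\mathcal L}_{N+1}$ to both sides and using monotonicity gives $\hat{\mathcal L}_{N+1}(V)\le\hat{\mathcal L}_{N+1}^2(V)$. Induction then yields the nondecreasing chain
\[
V\le\hat{\mathcal L}_{N+1}(V)\le\hat{\mathcal L}_{N+1}^2(V)\le\cdots\le\hat{\mathcal L}_{N+1}^K(V)\quad\text{for all }K\in\mathbb N_+ .
\]
Because $V$ is bounded, it lies in $\mathscr V(\mathcal S)$. The Banach fixed-point theorem then gives $\|\hat{\mathcal L}_{N+1}^K(V)-\hat V_{N+1}^*\|_\infty\le\gamma^K\|V-\hat V_{N+1}^*\|_\infty\to0$. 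Uniform convergence implies pointwise convergence, so letting $K\to\infty$ in $V(s)\le\hat{\mathcal L}_{N+1}^K(V)(s)$ gives $V(s)\le\hat V_{N+1}^*(s)$ for every $s$. This is the same passage to the limit used in the proof of Theorem~\ref{thm-finite-sample-guarantee}.

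The second claim follows by reversing every inequality. From $V\ge\hat{\mathcal L}_{N+1}(V)$, monotonicity gives the nonincreasing chain $V\ge\hat{\mathcal L}_{N+1}^K(V)$, and the same limit argument gives $V\ge\hat V_{N+1}^*$.

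The only step needing care is checking that $\hat{\mathcal L}_{N+1}$ really maps the bounded functions in question into $\mathscr V(\mathcal S)$ and is well defined there. If the paper's function space requires measurability or continuity, one must confirm that the iterates stay in that space so that Lemma~\ref{lem-beo-3} applies. Lemma~\ref{lem:V_weight_bounded} settles boundedness, and by the continuity-preservation remark after it, continuity is preserved whenever $V$ is continuous. The cut envelopes this lemma will be applied to are maxima of finitely many affine functions, hence continuous, so the hypothesis is met in the intended use.
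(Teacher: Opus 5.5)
Your proof is correct, and it is the argument the paper relies on: the paper only cites this lemma, but the same monotonicity-plus-contraction iteration (apply the operator $K$ times, then let $K\to\infty$ using the $\gamma^K$ contraction bound) is exactly how it proves $\hat V^*_N\ge V^{\hat\pi^*_N}$ in Theorem~\ref{thm-finite-sample-guarantee}. Your closing caveat is harmless but unnecessary, because $\mathscr{V}(\mathcal S)$ requires only a finite weighted norm and the proof of Lemma~\ref{lem-beo-3} never uses continuity or measurability of $V$.
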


Let $\ell(s)=q^\top s + v$ be an affine cut generated in episode $N$.
We call $\ell$ $(N{+}1)$-valid if it is a sub-solution of episode $N{+}1$:
\[
\ell(s)\le \hat{\mathcal L}_{N+1}(\ell)(s),\quad \forall s\in\mathcal S.
\]
Equivalently, it suffices to check the nonnegativity of the worst violation:
\begin{equation}\label{eq:warmcheck-def}
\Delta_{N\to N+1}(\ell)
:=\min_{s\in\mathcal S}\ \big(\hat{\mathcal L}_{N+1}(\ell)(s)-\ell(s)\big)\ \ge 0.
\end{equation}
Under the mean-risk reformulation \eqref{drmdp2}, the minimization in \eqref{eq:warmcheck-def} admits an explicit convex-program form:
\begin{equation}\label{eq:warmcheck-cvx}
\begin{aligned}
\Delta_{N\to N+1}(\ell)=
\min_{s\in\mathcal S,\ a\in\mathcal A,\ \zeta,\ y\in\mathbb R_+^J}\ 
&\lambda_{N+1}\sum_{j=1}^J p^{l}_{j,N+1}\Big(\mathcal C^j(s,a)+\gamma \ell(A^j s+B^j a+b^j)\Big)
+(1-\lambda_{N+1})\zeta\\
&\quad+\frac{1-\lambda_{N+1}}{1-\upsilon_{N+1}}\sum_{j=1}^J p^{u-l}_{j,N+1} y_j
-\ell(s)\\
\text{s.t.}\quad 
& y_j\ge \mathcal C^j(s,a)+\gamma \ell(A^j s+B^j a+b^j)-\zeta,\quad j=1,\dots,J.
\end{aligned}
\end{equation}
Under Assumption~\ref{ass-sddp}, \eqref{eq:warmcheck-cvx} is a convex optimization problem. Furthermore, if $\mathcal S,\mathcal A$ are polyhedral
and $\mathcal C^j$ is piecewise-linear convex, then \eqref{eq:warmcheck-cvx} reduces to a linear program.

Let $\mathfrak C_N$ be the cut pool obtained at episode $N$, and define the subset of $(N{+}1)$-valid cuts by
\[
\mathfrak C_N^{\mathrm{valid}}
:=\{\ell\in\mathfrak C_N:\Delta_{N\to N+1}(\ell)\ge 0\}.
\]
Then the maximum of valid cuts forms a valid warm-start lower bound from the following proposition.

\begin{proposition}\label{prop:warm-max}
Let $\underline V_{N+1}^0(s):=\max\big\{-\bar{\mathcal C}/(1-\gamma),\ \max_{\ell\in\mathfrak C_N^{\mathrm{valid}}}\ell(s)\big\}$.
Then $\underline V_{N+1}^0$ satisfies $\underline V_{N+1}^0 \le \hat{\mathcal L}_{N+1}(\underline V_{N+1}^0)$, and hence
$\underline V_{N+1}^0 \le \hat V_{N+1}^*$.
\end{proposition}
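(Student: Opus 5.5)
The plan is to use monotonicity of $\hat{\mathcal L}_{N+1}$ together with the validity test \eqref{eq:warmcheck-def} for each retained cut. Then Lemma~\ref{lem:warm-sub} gives the comparison with $\hat V_{N+1}^*$.

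First, monotonicity. $\hat{\mathcal L}_{N+1}(V)(s)=\inf_a\rho_{N+1}[\mathcal C(s,a,\xi)+\gamma V(g(s,a,\xi))]$, and $\rho_{N+1}$ is a coherent risk measure; equivalently, it is a supremum of expectations over $\mathfrak P(\hat P_{N+1},\hat r_{N+1})$. Hence $V\le V'$ pointwise implies $\hat{\mathcal L}_{N+1}(V)\le\hat{\mathcal L}_{N+1}(V')$.

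Next, I bound each piece of the maximum separately. Let $c_0:=-\bar{\mathcal C}/(1-\gamma)$ denote the constant function.
\begin{itemize}
\item \emph{Constant piece.} Since $|\mathcal C|\le\bar{\mathcal C}$ and $\rho_{N+1}$ is translation invariant and monotone, $\hat{\mathcal L}_{N+1}(c_0)(s)\ge -\bar{\mathcal C}+\gamma c_0=c_0$. So $c_0$ is a sub-solution.
\item \emph{Valid cuts.} Each $\ell\in\mathfrak C_N^{\mathrm{valid}}$ satisfies $\ell\le\hat{\mathcal L}_{N+1}(\ell)$ on $\mathcal S$ by the definition $\Delta_{N\to N+1}(\ell)\ge0$.
\end{itemize}
Each of these functions lies below $\underline V_{N+1}^0$. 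By monotonicity, each is then bounded by $\hat{\mathcal L}_{N+1}(\underline V_{N+1}^0)$:
\[
c_0\le\hat{\mathcal L}_{N+1}(c_0)\le\hat{\mathcal L}_{N+1}(\underline V_{N+1}^0),\qquad \ell\le\hat{\mathcal L}_{N+1}(\ell)\le\hat{\mathcal L}_{N+1}(\underline V_{N+1}^0).
\]
Taking the maximum over the finitely many pieces gives $\underline V_{N+1}^0\le\hat{\mathcal L}_{N+1}(\underline V_{N+1}^0)$.

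Finally, I apply Lemma~\ref{lem:warm-sub}, which needs $\underline V_{N+1}^0$ to be bounded. This holds because $\mathcal S$ is compact and $\underline V_{N+1}^0$ is a finite maximum of affine functions and a constant. The lemma then yields $\underline V_{N+1}^0\le\hat V_{N+1}^*$. Alternatively, iterating the inequality with monotonicity and using the $\gamma$-contraction gives the same conclusion directly.

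The main subtlety is that $\hat{\mathcal L}_{N+1}(\ell)$ must be evaluated only at points where $\ell$ is composed with $g(s,a,\xi)\in\mathcal S$. This is consistent because $g$ maps into $\mathcal S$, so the validity test \eqref{eq:warmcheck-cvx} is exactly the pointwise sub-solution inequality. Beyond that, the argument is routine once monotonicity is stated.
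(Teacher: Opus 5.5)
Your proof is correct and follows the same route as the paper. Each constituent (the constant $c_0$ and every valid cut) is a sub-solution of $\hat{\mathcal L}_{N+1}$; monotonicity lifts each one below $\hat{\mathcal L}_{N+1}(\underline V_{N+1}^0)$, taking the maximum gives $\underline V_{N+1}^0\le\hat{\mathcal L}_{N+1}(\underline V_{N+1}^0)$, and Lemma~\ref{lem:warm-sub} then yields $\underline V_{N+1}^0\le\hat V_{N+1}^*$. Your explicit check that $\hat{\mathcal L}_{N+1}(c_0)\ge -\bar{\mathcal C}+\gamma c_0=c_0$ supplies a step the paper only asserts: the paper calls the constant a ``valid global lower bound,'' which by itself is not the sub-solution property the monotonicity argument needs.
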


\begin{proof}
For each $\ell\in\mathfrak C_N^{\mathrm{valid}}$ we have $\ell\le \hat{\mathcal L}_{N+1}(\ell)$ by definition.
Let $\underline V(s):=\max_{\ell\in\mathfrak C_N^{\mathrm{valid}}}\ell(s)$.
Then for any $\ell$ we have $ \ell\leq \underline V$, so by monotonicity
$ \ell\leq\hat{\mathcal L}_{N+1}(\ell)\leq \hat{\mathcal L}_{N+1}(\underline V)$.
Taking the maximum over $\ell$ yields $\underline V\leq \hat{\mathcal L}_{N+1}(\underline V)$.
The baseline constant $-\bar{\mathcal C}/(1-\gamma)$ is also a valid global lower bound, and the same monotonicity argument shows that
$\underline V_{N+1}^0=\max\{-\bar{\mathcal C}/(1-\gamma),\underline V\}$ remains a sub-solution.
The claim follows from Lemma~\ref{lem:warm-sub}.
\end{proof}

In summary, this section develops the BOCP algorithm as a computationally tractable method for solving episodic Bayesian DROC problems. Its convergence guarantees, together with the proposed warm-start mechanism across posterior updates, provide a practical framework for infinite-horizon distributionally robust control under episodic Bayesian learning.

\section{Numerical results}
In this section, we illustrate the theoretical results and evaluate Algorithms~\ref{alg:A} and \ref{alg:B} on a single-product inventory control problem under the episodic Bayesian DROC formulation. All experiments were conducted on a 64-bit PC with 12 GB RAM and a 3.20 GHz processor. 

We consider an infinite-horizon stationary inventory problem discussed in \cite{shapiro2025episodic}:
$$
\begin{array}{cl}
\min_{\pi \in \Pi} & \mathbb{E}^{\pi}_{P^c} \left[\sum_{t=1}^{\infty} \gamma^{t-1}\left(c a_t+\psi\left(s_t+a_t, \xi_t\right)\right)\right] \\
\text { s.t. } & s_{t+1}=s_t+a_t-\xi_t,\ a_t \geq 0
\end{array}
$$
where
$\psi(y, d):=b[d-y]^++h[y-d]^+,$ $c,b,h$ are
the per-unit ordering cost, backorder penalty cost, holding cost, respectively, satisfying $b>c>0$. Here,
$s_t$ represents the inventory level at time $t$, $a_t$ denotes the order quantity, and the demand $\xi_t \geq 0$ is an i.i.d. random variable. The inventory problem admits an optimal base-stock policy $\pi^*(s)=\left[s^*-s\right]^+$, where the optimal threshold $s^*=F^{-1}(\kappa)$ is determined by
$$
F(\nu):=P^c(\xi \leq \nu)=\mathbb{E}_{P^c}\left[\mathds{1}_{\{\xi\in(-\infty, \nu]\}}\right],\kappa=\frac{b-(1-\gamma) c}{b+h}
$$
as in \cite[Section 1.2]{shapiro2021lectures}.
According to \cite{shapiro2025episodic}, for $s \leq s^*$, we have
\begin{equation}\label{eq-numer-true}
V^*(s)=-c s+(1-\gamma)^{-1} \mathbb{E}_{P^c}\left[\gamma c \xi+(1-\gamma) c s^*+\psi\left(s^*, \xi\right)\right]. 
\end{equation}
We adopt the following parameter configuration: $c=1$, $h=2$, $b=10$, and $\xi$ follows an exponential distribution $\mathrm{Exp}(10)$, which is assumed to be the \emph{clean} environment.
To match the finite-support assumption, we truncate the demand at \(U=50\) (close to $F^{-1}(0.99)$) and split \([0,U]\) into \(J=50\) equal-width intervals with edges \(u_j:=jU/J=j\) and grid points \(\xi^j=j-\frac{1}{2}\). The normalized bin probabilities are \(p_j=\big(F(u_j)-F(u_{j-1})\big)/F(U)\).
Using the closed-form solution \eqref{eq-numer-true},  we approximate the benchmark value function \(V^*\) with \(10^5\) samples.

In episode $N$, we update the Bayesian posterior by \eqref{posterior} using the observed data and construct the ambiguity set \eqref{ambiguity} corresponding to the posterior distribution.
We then compute the episode-wise optimal value function $\hat{V}_N^*$ from the following Bellman equation:
\begin{equation}\label{eq-num-droc}
 V_N(s) = \inf_{a \in \mathcal{A}}\sup_{{P}\in \mathcal{P}_{\mu_N}} \mathbb{E}_{P} [c a+\psi\left(s+a, \xi\right) + \gamma V_{N}(s+a-\xi)].
\end{equation}
Here, we choose the credible level $\alpha=0.2$ to construct $\mathcal{P}_{\mu_N}$ and the discount factor $\gamma=0.95$. 
We first plot the integrated gap between $\hat{V}_N^*$ and $V^*$, computed as 
$\int_{\mathcal S}\big|\hat V_N^*(s)-V^*(s)\big|d\varpi^*(s)$, where $\varpi^*$ is the
stationary distribution under true optimal policy $\pi^*$ of the original problem. 
The procedure is replicated 100 times to reduce sampling variability.
\begin{figure}[h] 
	\centering \includegraphics[width=0.7\textwidth]{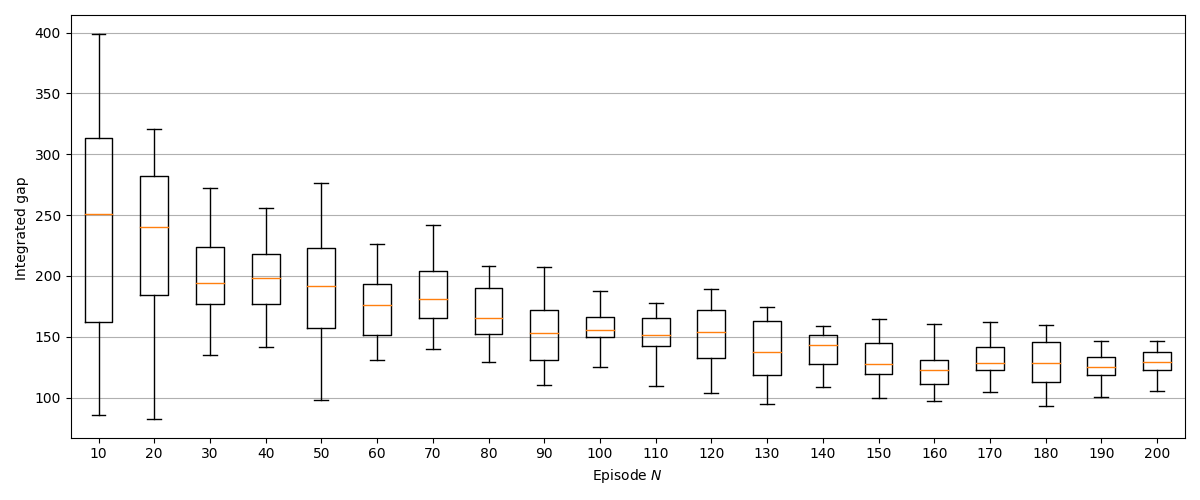} 
	\caption{Convergence of the integrated gap of the optimal value function across episodes.} 
	\label{fig:3} 
\end{figure}

As shown in Figure~\ref{fig:3}, the episodic Bayesian DROC approach yields an optimal value function that converges to $V^*$ as the episode index increases.
Next, we empirically validate the quantitative statistical robustness result under a Huber-type perturbation of the clean environment $P^c=\mathrm{Exp}(10)$. In this experiment, $T_N^{s_0}$ denotes the optimal value at the reference state $s_0=0$ obtained from the Bayesian DROC model constructed using $N=500$ observed samples.
For each shift level $\varepsilon\in[0,0.3]$, we define a perturbed distribution
$\tilde P^c_\varepsilon:=(1-\varepsilon)\mathrm{Exp}(10)+\varepsilon\mathrm{Exp}(40)$.
Note that $\tilde P^c_\varepsilon$ is a mixture distribution rather than a single exponential distribution. Hence, the perturbed environment is no longer within the nominal exponential family, and the experiment reflects a genuine distribution shift away from the clean environment.
We quantify the input perturbation by the one-dimensional Kantorovich distance $\dd_K(P^c,\tilde P^c_\varepsilon)$, which increases approximately linearly with the shift level $\varepsilon$ under this perturbation family.
We generate $R=1000$ independent datasets $\vec{\hat\xi}_N^{(r)}\sim (P^c)^{\otimes N}$ and
$\vec{\tilde\xi}_N^{(r)}\sim (\tilde P^c_\varepsilon)^{\otimes N}$.
This produces two empirical samples $T_N^{s_0}(\vec{\hat\xi}_N^{(r)})$ and $T_N^{s_0}(\vec{\tilde\xi}_N^{(r)})$. We then estimate the empirical Kantorovich distance on $\mathbb R$ as
\[
\dd_K\left((P^c)^{\otimes N}\circ (T_N^{s_0})^{-1},(\tilde P^c_\varepsilon)^{\otimes N}\circ (T_N^{s_0})^{-1}\right)
\approx\frac{1}{R}\sum_{r=1}^{R}\big|T_{(r)}-\tilde T_{(r)}\big|,
\]
where $\{T_{(r)}\}$ and $\{\tilde T_{(r)}\}$ are the sorted samples of
$\{T_N^{s_0}(\vec{\hat\xi}_N^{(r)})\}_{r=1}^R$ and $\{T_N^{s_0}(\vec{\tilde\xi}_N^{(r)})\}_{r=1}^R$.
The left panel of Figure~\ref{fig:qsr-num} depicts a linear relationship between $\dd_K(P^c,\tilde P^c_\varepsilon)$ and $\varepsilon$ as theoretically guaranteed. Based on the approximate linear relationship,
we plot the right panel with $\varepsilon$ as horizontal axis. The figure shows a sublinear relationship between $\dd_K\big((P^c)^{\otimes N}\circ (T_N^{s_0})^{-1},(\tilde P^c_\varepsilon)^{\otimes N}\circ (T_N^{s_0})^{-1}\big)$ and $\varepsilon$.
The latter relationship is consistent with Theorem~\ref{thm-qsr}, which predicts a controlled growth of the estimator-law deviation as the underlying distribution shift increases.

\begin{figure}[h]
\centering
\includegraphics[width=0.95\textwidth]{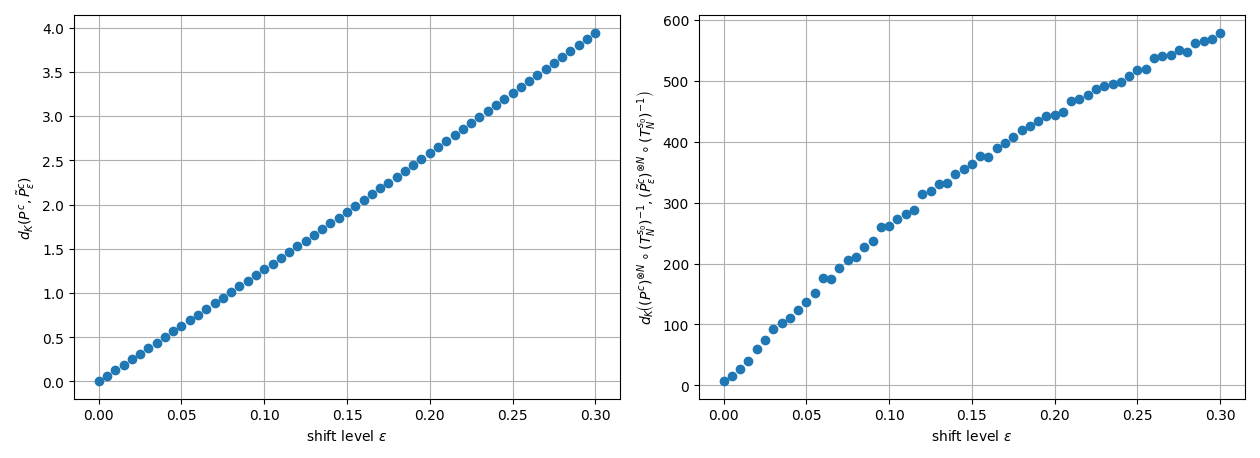}
\caption{Quantitative statistical robustness experiment at $s_0=0$.
}
\label{fig:qsr-num}
\end{figure}

We then demonstrate the computational performance of Algorithm~\ref{alg:B} on this problem.
We consider two variants of Algorithm~\ref{alg:B}. Both variants solve the episodic Bayesian DROC Bellman equation via the BOCP scheme, but they differ in whether the lower approximation is warm-started by reusing cuts from the previous episode. The concrete results are plotted in Figure~\ref{fig:BOCP-within}. The left panel in Figure~\ref{fig:BOCP-within} plots the integrated gap between the episode-wise optimal value function $\hat V_N^*$ and the BOCP lower approximation $\underline V_N^{k}$ under the stationary distribution $\varpi_N$ induced by the episodic optimal policy $\hat \pi^*_N$, i.e.,
$\int_{\mathcal S}\big(\hat V_N^*(s)-\underline V_N^{k}(s)\big)d\varpi_N(s).$
We drop the absolute value since $\underline V_N^{k}$ is a lower bound of $\hat V_N^*$ by construction.
The vertical dashed lines indicate the switches between successive episodes. 
The right panel in Figure~\ref{fig:BOCP-within} reports the within-episode convergence of BOCP in the episode $N=100$ by plotting the Bellman residual
$\mathfrak{R}_k=\|\hat{\cal L}_N(\underline V_N^{k})-\underline V_N^{k}\|_\infty$ against the BOCP iteration index $k$ (log scale).
Overall, Figure~\ref{fig:BOCP-within} demonstrates that warm-start yields a much smaller initial gap at the beginning of each episode and accelerates convergence, illustrating the benefit of safely reusing valid cuts across posterior updates.

\begin{figure}[h]
 \centering
 \begin{subfigure}[b]{0.48\textwidth}
 \centering
 \includegraphics[width=\textwidth]{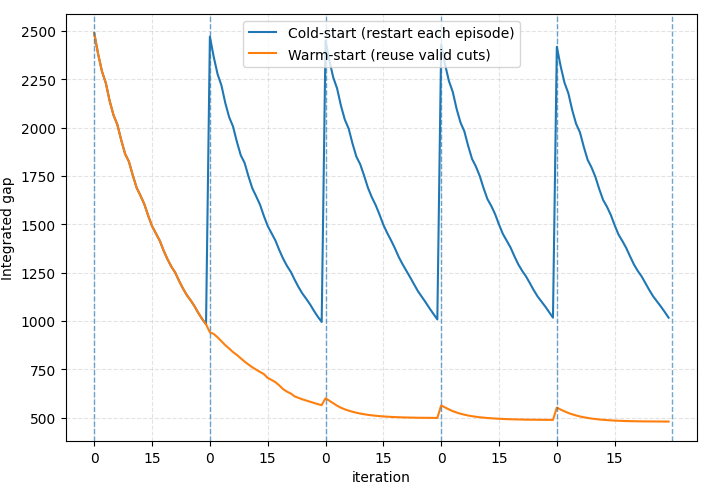}
 \caption{Integrated gap across episodes $N=1,\ldots,5$.}
 \end{subfigure}%
 \hfill
 \begin{subfigure}[b]{0.48\textwidth}
 \centering
 \includegraphics[width=\textwidth]{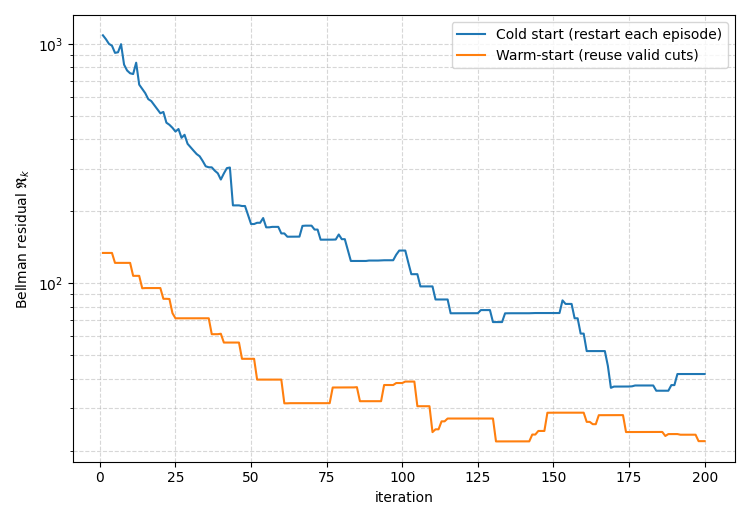}
 \caption{Bellman residual at episode $N=100$.}
 \end{subfigure}
 \caption{BOCP warm-start vs.\ cold-start for the episodic Bayesian DROC. }
 \label{fig:BOCP-within}
\end{figure}

In the following, we compare the proposed episodic Bayesian DROC approach with episodic Bayesian SOC in \cite{shapiro2025episodic} and an
extension of distributionally robust stochastic control (denoted by DRSC) proposed in \cite{shapiro2012minimax}.
Notably, DRSC only considers a single episode, meaning that the ambiguity set $\mathcal{P}^{DRSC}$ is only constructed once with some pre-collected historical data set. Following the setting in \cite{shapiro2025episodic}, we implement an episodic extension: at the beginning of each episode, we reconstruct the ambiguity set using all data collected so far and shrink its size at the standard rate $O(t^{-1/2})$, where $t$ denotes the cumulative number of observations up to the current episode. 
Specifically, we set the DRSC ambiguity set as a $L_\infty$ ball centered at the nominal distribution $\hat P_N^{\mathrm E}$, i.e.,
$\mathcal P^{\mathrm{DRSC}}_t=\{Q\in\mathscr{P}(\Xi): \|Q-\hat P_t^{\mathrm E}\|_\infty\le \frac{1}{10\sqrt t}\}$
and the robust policy is obtained by solving the corresponding distributionally robust Bellman equation under $\mathcal P^{\mathrm{DRSC}}_{t}$.

\begin{figure}[htbp]
 \centering

 \begin{subfigure}[t]{0.6\textwidth}
 \centering
 \includegraphics[width=\textwidth]{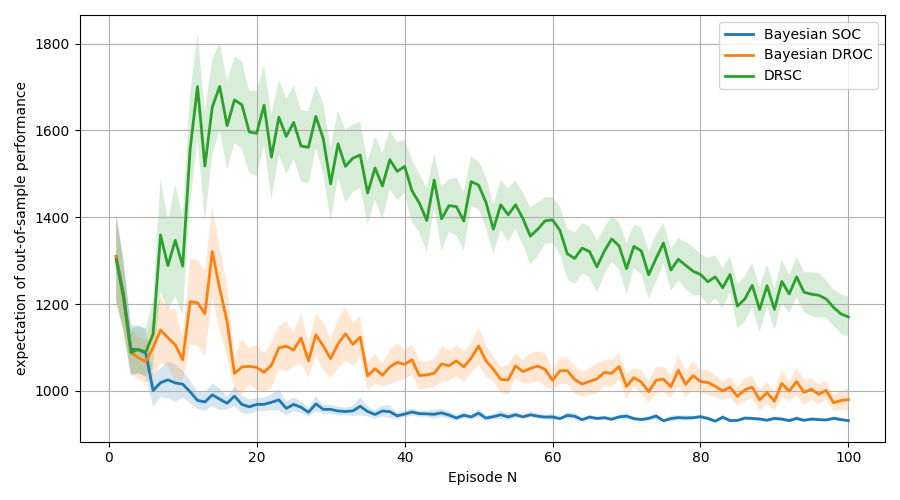}
 \caption{Expected cost.}
 \end{subfigure}
 
 \begin{subfigure}[t]{0.6\textwidth}
 \centering
 \includegraphics[width=\textwidth]{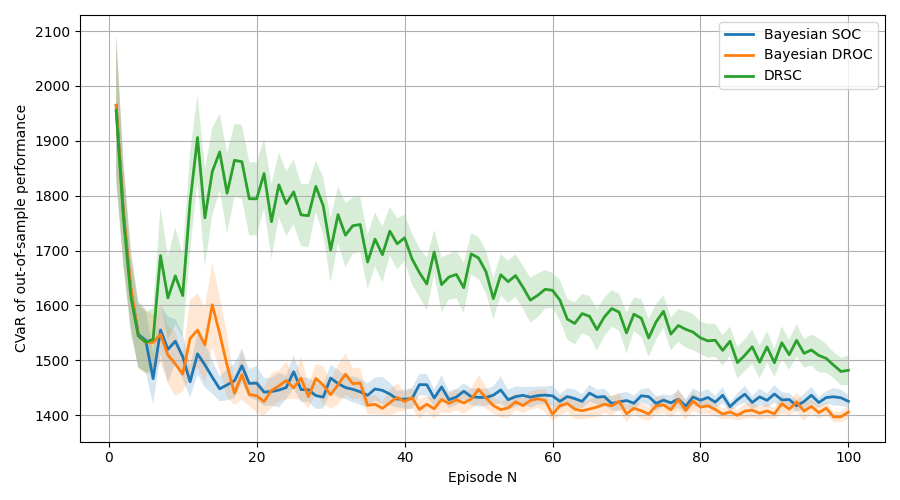}
 \caption{$\mathrm{CVaR}^{0.95}$.}
 \end{subfigure}
 
 \begin{subfigure}[t]{0.6\textwidth}
 \centering
 \includegraphics[width=\textwidth]{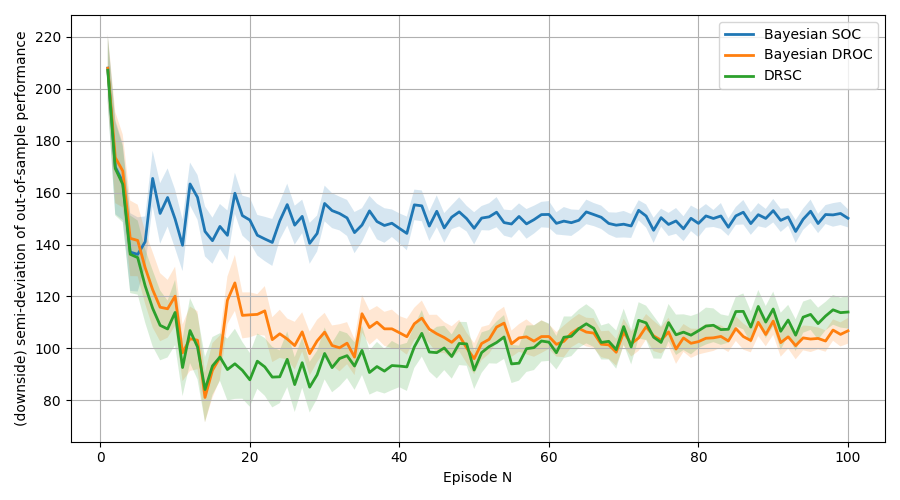}
 \caption{Downside semi-deviation.}
 \end{subfigure}

 \caption{Out-of-sample discounted cost under the true environment versus episode index $N$.}
 \label{fig:2}
\end{figure}
Figure \ref{fig:2} compares the out-of-sample performance of the learned policies under three different risk measures 
(mean, $\mathrm{CVaR}^{0.95}$, and downside semi-deviation) across episodes under a clean environment $\mathrm{Exp}(10)$.
For each episode index $N$, we evaluate the episode-wise optimal policy $\hat\pi_N^*$ under the true demand distribution $P^c$ using 2000 independent rollouts. Each rollout produces a discounted cost sample
$\sum_{t=1}^{T}\gamma^{t-1}\big(ca_t+\psi(s_t+a_t,\xi_t)\big)$,
where $a_t=\hat\pi_N^*(s_t)$ with truncation $T=250$. We then compute the $\mathrm{CVaR}^{0.95}$ and the downside semi-deviation of the discounted total cost from the resulting cost samples. To account for randomness in the training data and posterior updates, we repeat the entire procedure over 100 independent replications. The curves report the replication mean, and the shaded bands show $95\%$ confidence intervals based on the standard error across replications.

\begin{figure}[h]
 \centering
 \begin{subfigure}[t]{0.9\textwidth}
 \centering
 \includegraphics[width=\textwidth]{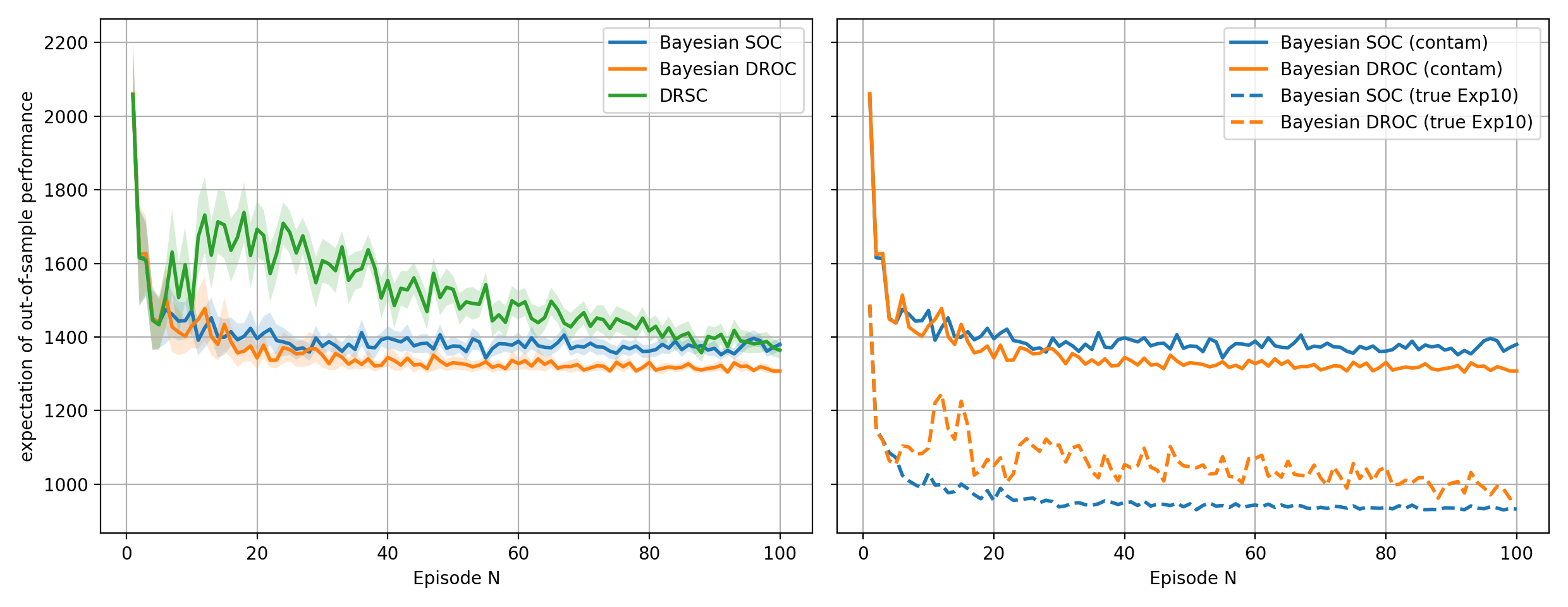}
 \caption{Huber mixture case.}
 \end{subfigure}
 \begin{subfigure}[t]{0.9\textwidth}
	\centering \includegraphics[width=\textwidth]{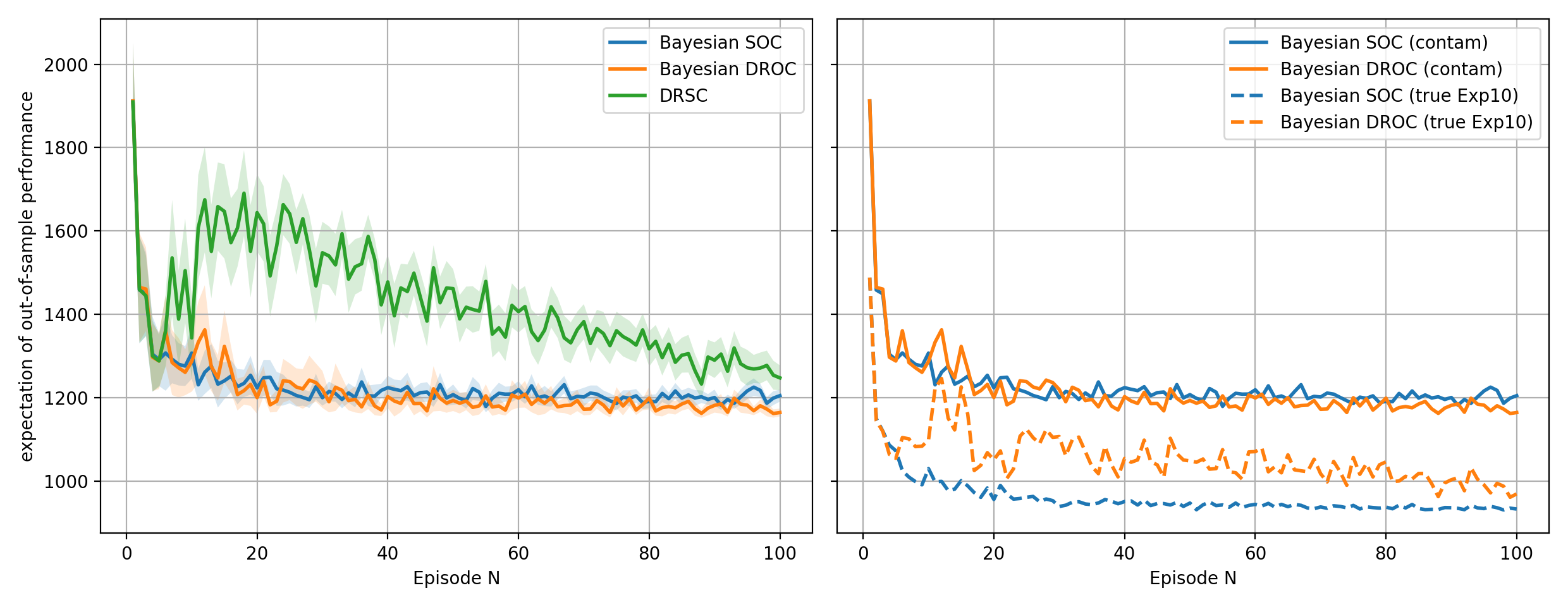} 
 \caption{Scale shift case.}
 \end{subfigure}
\caption{Out-of-sample discounted cost under the contaminated environment versus episode index $N$.}
	\label{fig:contaminated} 
\end{figure}

Figure~\ref{fig:contaminated} further compares the expected out-of-sample discounted cost of the learned policies across episodes under contaminated test environments. Specifically, in each episode $N$, the training data are generated from the clean environment $P^c$ and the learned policy is then evaluated under the shifted test environment $\tilde P^c_{\varepsilon}$ with $\varepsilon=0.3$, considering two shift mechanisms:
(i) Huber mixture $\tilde P^c_{\varepsilon}=(1-\varepsilon)\mathrm{Exp}(10)+\varepsilon\mathrm{Exp}(30)$;
(ii) scale shift $\tilde P^c_{\varepsilon}=\mathrm{Exp}(10(1+\varepsilon))$.
The left panels show the average performance of all methods when deployed in the contaminated test environment $\tilde P^c_{\varepsilon}$.
In contrast, the right panels focus on the two Bayesian methods and disentangle the effect of distribution shift by evaluating the learned policies under both the clean environment $P^c$ and the contaminated environment $\tilde P^c_{\varepsilon}$ within the same plot.
This visualization highlights robustness under contamination or distribution shift when the learned Bayesian policies are deployed in shifted environments.
It should be noted that this experiment is different from the quantitative statistical robustness experiment in Figure~\ref{fig:qsr-num}: there the episode index $N$ is fixed and the perturbation level $\varepsilon$ varies, whereas here $\varepsilon$ is fixed and the episode index $N$ increases.
Nevertheless, the two experiments are consistent: for a fixed distribution shift, the induced estimation error remains uniformly controlled with respect to $N$.
This helps explain why the gap between the clean-test and shifted-test performance curves remains contained as the episode index increases.

Overall, Figures~\ref{fig:2}--\ref{fig:contaminated} report the out-of-sample discounted costs of the learned policies
as the episode $N$ increases.
In the {clean} environment (Figure~\ref{fig:2}), the episodic Bayesian SOC method tends to achieve a lower expected cost.
A possible explanation is that it optimizes a risk-neutral posterior-averaged objective under the epistemic uncertainty.
In contrast, the episodic Bayesian DROC method exhibits a consistently stronger control of tail and
downside risks, reflected by smaller $\mathrm{CVaR}^{0.95}$ and downside semi-deviation across episodes.
This observation is consistent with Theorem~\ref{thm-reformulation}, which shows that the Bayesian DROC Bellman equation admits an equivalent mean--CVaR reformulation and therefore embeds an explicit risk-averse preference for adverse demand realizations.
Hence, Bayesian DROC may trade a small amount of mean performance for a more stable out-of-sample behavior,
especially in the upper-tail events that dominate service-level and backorder risks in inventory systems.

More importantly, under contaminated environments (Figure~\ref{fig:contaminated}), where the true demand distribution
deviates from the nominal exponential family used for posterior learning, Bayesian DROC shows stronger robustness to distribution shift.
Specifically, in the left panels, Bayesian DROC attains lower expected out-of-sample discounted cost than the competing methods under the shifted test environment.
Moreover, in the right panels, the gap between the clean-test and contaminated-test performance curves is generally smaller for Bayesian DROC than for episodic Bayesian SOC, indicating that its performance deteriorates less severely when deployed under model mismatch.
Together, these observations suggest that the ambiguity set $\mathcal P_{\mu_N}$ provides a principled uncertainty envelope and that the robust Bellman equation~\eqref{eq-num-droc} produces policies that generalize more reliably under distribution shift.

As noted in \cite{shapiro2025episodic}, there is currently limited methodological guidance on how to adaptively shrink
ambiguity sets for distributionally robust approaches in dynamic settings, and DRSC-type methods typically rely on a
pre-specified radius schedule, which can be overly conservative in finite samples.
In contrast, our episodic Bayesian DROC constructs $\mathcal P_{\mu_N}$ directly from posterior credible regions and updates it
episode by episode using all data collected so far, so the effective ambiguity size contracts automatically as the
posterior concentrates. This posterior-driven update mitigates the conservatism caused by hand-crafted shrinkage
rules and explains why Bayesian DROC outperforms the DRSC baseline in Figures~\ref{fig:2}--\ref{fig:contaminated}.

\section{Concluding remarks}
In this paper, we combine distributionally robust optimization with Bayesian adaptive learning to propose an episodic distributionally robust optimal control (DROC) framework with posterior-induced ambiguity sets. The new model systematically integrates the decision-maker's belief with sequentially revealed observations across episodes.
Our analysis reveals that the proposed framework possesses several distinctive advantages: (i) an explicit mean-risk (expectation--CVaR) reformulation of the robust Bellman operator, which facilitates both computation and interpretation, (ii) asymptotic convergence of the episodic value functions and policies, together with a finite-sample guarantee, and (iii) quantitative stability and statistical robustness guarantees under suitable regularity conditions and data perturbations.
We further develop an operator-level cutting-plane (BOCP) algorithm for the episodic Bayesian DROC Bellman equation. Numerical results show improved out-of-sample tail-risk performance and stronger robustness under distribution shift.

A simplifying assumption in our analysis is that the episodic Bayesian DROC problem is solved exactly in each episode, which may be impractical in some applications. This suggests several directions for future research.
One is to investigate the trade-off between estimation error, arising from Bayesian learning with finite data, and optimization error, arising from approximate solutions of episodic Bayesian DROC. Another is to study the more general adaptive Bayesian SOC model with an augmented state space that explicitly includes posterior beliefs, together with more efficient solution algorithms.


\section*{Acknowledgments}
The first and second authors are supported by the National Key R\&D Program of China (2022YFA1004000, 2022YFA1004001).

	\bibliographystyle{abbrv}
	\bibliography{ref}

@book{pflug2014multistage,
  title={Multistage Stochastic Optimization},
  author={Pflug, Georg Ch and Pichler, Alois},
  volume={1104},
  year={2014},
  publisher={Springer}
}

@article{li2020confidence,
  title={Confidence interval based distributionally robust real-time economic dispatch approach considering wind power accommodation risk},
  author={Li, Peng and Yang, Ming and Wu, Qiuwei},
  journal={IEEE Transactions on Sustainable Energy},
  volume={12},
  number={1},
  pages={58--69},
  year={2020},
  publisher={IEEE}
}

@phdthesis{nilim2004robust,
  title={Robust markov decision processes with uncertain transition matrices},
  author={Nilim, Arnab and El Ghaoui, Laurent},
  year={2004},
  school={University of California, Berkeley}
}

@book{castaing1977convex,
  title        = {Convex Analysis and Measurable Multifunctions},
  author       = {Castaing, Charles and Valadier, Michel},
  year         = {1977},
  publisher    = {Springer},
}

@article{guo2019distributionally,
  title={Distributionally robust shortfall risk optimization model and its approximation},
  author={Guo, Shaoyan and Xu, Huifu},
  journal={Mathematical Programming},
  volume={174},
  number={1},
  pages={473--498},
  year={2019},
  publisher={Springer}
}

@article{yang2025stability,
  title={Stability Analysis of An Integrated Multistage Stochastic Programming and {M}arkov Decision Process Problem},
  author={Yang, Zhiyao and Chen, Zhiping and Xu, Huifu},
  journal={arXiv preprint arXiv:2509.22194},
  year={2025}
}

@article{wessels1977markov,
  title={Markov programming by successive approximations with respect to weighted supremum norms},
  author={Wessels, Jaap},
  journal={Journal of mathematical analysis and applications},
  volume={58},
  number={2},
  pages={326--335},
  year={1977},
  publisher={Elsevier}
}

@inproceedings{li2022risk,
  title={Risk-aware model predictive control enabled by {B}ayesian learning},
  author={Li, Yingke and Lin, Yifan and Zhou, Enlu and Zhang, Fumin},
  booktitle={2022 American Control Conference (ACC)},
  pages={108--113},
  year={2022},
  organization={IEEE}
}

@article{li2025bayesian,
  title={Bayesian Risk-averse Model Predictive Control with Consistency and Stability Guarantees},
  author={Li, Yingke and Lin, Yifan and Zhou, Enlu and Zhang, Fumin},
  journal={arXiv preprint arXiv:2511.21871},
  year={2025}
}

@article{dibiasi2021measuring,
  title={Measuring {K}nightian uncertainty},
  author={Dibiasi, Andreas and Iselin, David},
  journal={Empirical Economics},
  volume={61},
  number={4},
  pages={2113--2141},
  year={2021},
  publisher={Springer}
}

@article{chen2024bayesian,
  title={A {B}ayesian approach to data-driven multi-stage stochastic optimization},
  author={Chen, Zhiping and Ma, Wentao},
  journal={Journal of Global Optimization},
  pages={1--28},
  year={2024},
  publisher={Springer}
}

@article{wang2016likelihood,
	title={Likelihood robust optimization for data-driven problems},
	author={Wang, Zizhuo and Glynn, Peter W and Ye, Yinyu},
	journal={Computational Management Science},
	volume={13},
	pages={241--261},
	year={2016},
	publisher={Springer}
}

@inproceedings{strens2000bayesian,
	title={A {B}ayesian framework for reinforcement learning},
	author={Strens, Malcolm},
	booktitle={International Conference on Machine Learning},
	volume={2000},
	pages={943--950},
	year={2000}
}

@article{osband2013more,
	title={{(More)} efficient reinforcement learning via posterior sampling},
	author={Osband, Ian and Russo, Daniel and Van Roy, Benjamin},
	journal={Advances in Neural Information Processing Systems},
	volume={26},
	year={2013}
}

@book{bertsekas2012dynamic,
	title={Dynamic Programming and Optimal Control: Volume I},
	author={Bertsekas, Dimitri},
	volume={4},
	year={2012},
	publisher={Athena Scientific}
}

@article{lam2019recovering,
  title={Recovering best statistical guarantees via the empirical divergence-based distributionally robust optimization},
  author={Lam, Henry},
  journal={Operations Research},
  volume={67},
  number={4},
  pages={1090--1105},
  year={2019},
  publisher={INFORMS}
}

@article{hanasusanto2015distributionally,
  title={A distributionally robust perspective on uncertainty quantification and chance constrained programming},
  author={Hanasusanto, Grani A and Roitch, Vladimir and Kuhn, Daniel and Wiesemann, Wolfram},
  journal={Mathematical Programming},
  volume={151},
  number={1},
  pages={35--62},
  year={2015},
  publisher={Springer}
}

@article{delage2010distributionally,
  title={Distributionally robust optimization under moment uncertainty with application to data-driven problems},
  author={Delage, Erick and Ye, Yinyu},
  journal={Operations research},
  volume={58},
  number={3},
  pages={595--612},
  year={2010},
  publisher={INFORMS}
}

@book{efron2021computer,
  title={Computer Age Statistical Inference, Student Edition: Algorithms, Evidence, and Data Science},
  author={Efron, Bradley and Hastie, Trevor},
  volume={6},
  year={2021},
  publisher={Cambridge University Press}
}

@article{pfeiffer2018two,
  title={Two approaches to stochastic optimal control problems with a final-time expectation constraint},
  author={Pfeiffer, Laurent},
  journal={Applied Mathematics \& Optimization},
  volume={77},
  pages={377--404},
  year={2018},
  publisher={Springer}
}

@article{carpentier2012dynamic,
  title={Dynamic consistency for stochastic optimal control problems},
  author={Carpentier, Pierre and Chancelier, Jean-Philippe and Cohen, Guy and De Lara, Michel and Girardeau, Pierre},
  journal={Annals of Operations Research},
  volume={200},
  pages={247--263},
  year={2012},
  publisher={Springer}
}

@book{huber2011robust,
  title={Robust Statistics},
  author={Huber, Peter J and Ronchetti, Elvezio M},
  year={2011},
  publisher={John Wiley \& Sons}
}

@article{hampel1971general,
  title={A general qualitative definition of robustness},
  author={Hampel, Frank R},
  journal={The Annals of Mathematical Statistics},
  volume={42},
  number={6},
  pages={1887--1896},
  year={1971},
  publisher={Institute of Mathematical Statistics}
}

@article{kern2020first,
  title={{First-order sensitivity of the optimal value in a Markov decision model with respect to deviations in the transition probability function}},
  author={Kern, Patrick and Simroth, Axel and Z{\"a}hle, Henryk},
  journal={Mathematical Methods of Operations Research},
  volume={92},
  number={1},
  pages={165--197},
  year={2020},
  publisher={Springer}
}

@article{tzortzis2019infinite,
  title={Infinite horizon average cost dynamic programming subject to total variation distance ambiguity},
  author={Tzortzis, Ioannis and Charalambous, Charalambos D and Charalambous, Themistoklis},
  journal={SIAM Journal on Control and Optimization},
  volume={57},
  number={4},
  pages={2843--2872},
  year={2019},
  publisher={SIAM}
}

@article{shapiro2012minimax,
  title={Minimax and risk averse multistage stochastic programming},
  author={Shapiro, Alexander},
  journal={European Journal of Operational Research},
  volume={219},
  number={3},
  pages={719--726},
  year={2012},
  publisher={Elsevier}
}

@book{bertsekas1996stochastic,
  title={Stochastic Optimal Control: The Discrete-time Case},
  author={Bertsekas, Dimitri and Shreve, Steven E},
  volume={5},
  year={1996},
  publisher={Athena Scientific}
}

@article{wang2024aleatoric,
  title={Aleatoric and epistemic discrimination: Fundamental limits of fairness interventions},
  author={Wang, Hao and He, Luxi and Gao, Rui and Calmon, Flavio},
  journal={Advances in Neural Information Processing Systems},
  volume={36},
  year={2024}
}

@article{ma2024bayesian,
  title={A {B}ayesian Composite Risk Approach for Stochastic Optimal Control and {M}arkov Decision Processes},
  author={Ma, Wentao and Chen, Zhiping and Xu, Huifu},
  journal={arXiv preprint arXiv:2412.16488},
  year={2024}
}

@article{liu2013stability,
  title={Stability analysis of stochastic programs with second order dominance constraints},
  author={Liu, Yongchao and Xu, Huifu},
  journal={Mathematical Programming},
  volume={142},
  pages={435--460},
  year={2013},
  publisher={Springer}
}

@article{shapiro2025episodic,
  title={Episodic {B}ayesian optimal control with unknown randomness distributions},
  author={Shapiro, Alexander and Zhou, Enlu and Lin, Yifan and Wang, Yuhao},
  journal={Operations Research},
  year={2025},
  publisher={INFORMS}
}

@article{xu2010distributionally,
	title={Distributionally robust Markov decision processes},
	author={Xu, Huan and Mannor, Shie},
	journal={Advances in Neural Information Processing Systems},
	volume={23},
	year={2010}
}

@article{guigues2023risk,
	title={Risk-Averse Stochastic Optimal Control: an efficiently computable statistical upper bound},
	author={Guigues, Vincent and Shapiro, Alexander and Cheng, Yi},
	journal={Operations Research Letters},
	volume={51},
	number={4},
	pages={393--400},
	year={2023},
	publisher={Elsevier}
}

@article{li2024discretization,
  title={Discretization and quantification for distributionally robust optimization with decision-dependent ambiguity sets},
  author={Li, Manlan and Tong, Xiaojiao and Sun, Hailin},
  journal={Optimization Methods and Software},
  pages={1--30},
  year={2024},
  publisher={Taylor \& Francis}
}

@article{wang2023bayesian,
  title={Bayesian risk-averse {Q}-learning with streaming observations},
  author={Wang, Yuhao and Zhou, Enlu},
  journal={Advances in Neural Information Processing Systems},
  volume={36},
  pages={75967--75992},
  year={2023}
}

@article{rahimian2022effective,
  title={Effective scenarios in multistage distributionally robust optimization with a focus on total variation distance},
  author={Rahimian, Hamed and Bayraksan, Guzin and De-Mello, Tito Homem},
  journal={SIAM Journal on Optimization},
  volume={32},
  number={3},
  pages={1698--1727},
  year={2022},
  publisher={SIAM}
}

@book{shapiro2021lectures,
	title={Lectures on Stochastic Programming: Modeling and Theory},
	author={Shapiro, Alexander and Dentcheva, Darinka and Ruszczynski, Andrzej},
	year={2021},
	publisher={SIAM}
}

@book{puterman2014markov,
	title={Markov Decision Processes: Discrete Stochastic Dynamic Programming},
	author={Puterman, Martin L},
	year={2014},
	publisher={John Wiley \& Sons}
}

@article{jiang2018risk,
  title={Risk-averse two-stage stochastic program with distributional ambiguity},
  author={Jiang, Ruiwei and Guan, Yongpei},
  journal={Operations Research},
  volume={66},
  number={5},
  pages={1390--1405},
  year={2018},
  publisher={INFORMS}
}

@article{mehrotra2014models,
  title={Models and algorithms for distributionally robust least squares problems},
  author={Mehrotra, Sanjay and Zhang, He},
  journal={Mathematical Programming},
  volume={146},
  number={1},
  pages={123--141},
  year={2014},
  publisher={Springer}
}

@book{berger2013statistical,
	title={Statistical Decision Theory and Bayesian Analysis},
	author={Berger, James O},
	year={2013},
	publisher={Springer Science \& Business Media}
}

@article{guo2021statistical,
  title={Statistical robustness in utility preference robust optimization models},
  author={Guo, Shaoyan and Xu, Huifu},
  journal={Mathematical Programming},
  volume={190},
  number={1},
  pages={679--720},
  year={2021},
  publisher={Springer}
}

@article{bertsimas2018robust,
  title={Robust sample average approximation},
  author={Bertsimas, Dimitris and Gupta, Vishal and Kallus, Nathan},
  journal={Mathematical Programming},
  volume={171},
  pages={217--282},
  year={2018},
  publisher={Springer}
}

@article{chen2025data,
  title={Data-driven approximation of distributionally robust chance constraints using {B}ayesian credible intervals},
  author={Chen, Zhiping and Ma, Wentao and Ji, Bingbing},
  journal={OR Spectrum},
  volume={47},
  number={3},
  pages={969--1009},
  year={2025},
  publisher={Springer}
}

@book{hernandez2012further,
  title={Further Topics on Discrete-time Markov Control Processes},
  author={Hern{\'a}ndez-Lerma, On{\'e}simo and Lasserre, Jean B},
  volume={42},
  year={2012},
  publisher={Springer Science \& Business Media}
}

@book{bertsekas2022abstract,
  title={Abstract Dynamic Programming},
  author={Bertsekas, Dimitri},
  year={2022},
  publisher={Athena Scientific}
}

@article{gao2023finite,
  title={Finite-sample guarantees for {W}asserstein distributionally robust optimization: {B}reaking the curse of dimensionality},
  author={Gao, Rui},
  journal={Operations Research},
  volume={71},
  number={6},
  pages={2291--2306},
  year={2023},
  publisher={INFORMS}
}

@book{gelman1995bayesian,
	title={{Bayesian Data Analysis}},
	author={Gelman, Andrew and Carlin, John B and Stern, Hal S and Rubin, Donald B},
	year={1995},
	publisher={Chapman and Hall/CRC}
}

@inproceedings{abeille2018improved,
  title={Improved regret bounds for Thompson sampling in linear quadratic control problems},
  author={Abeille, Marc and Lazaric, Alessandro},
  booktitle={International Conference on Machine Learning},
  pages={1--9},
  year={2018},
  organization={PMLR}
}

@article{gupta2019near,
	title={Near-optimal {B}ayesian ambiguity sets for distributionally robust optimization},
	author={Gupta, Vishal},
	journal={Management Science},
	volume={65},
	number={9},
	pages={4242--4260},
	year={2019},
	publisher={INFORMS}
}

@article{haskell2016empirical,
  title={Empirical dynamic programming},
  author={Haskell, William B and Jain, Rahul and Kalathil, Dileep},
  journal={Mathematics of Operations Research},
  volume={41},
  number={2},
  pages={402--429},
  year={2016},
  publisher={INFORMS}
}

@incollection{van1996weak,
  title={Weak convergence},
  author={Van Der Vaart, Aad W and Wellner, Jon A},
  booktitle={Weak convergence and empirical processes: with applications to statistics},
  pages={16--28},
  year={1996},
  publisher={Springer}
}

@book{aliprantis2006infinite,
  title={Infinite Dimensional Analysis: A Hitchhiker’s Guide},
  author={Aliprantis, Charalambos D and Border, Kim C},
  year={2006},
  publisher={Springer}
}

@article{huang2017study,
	title={A study of distributionally robust multistage stochastic optimization},
	author={Huang, Jianqiu and Zhou, Kezhuo and Guan, Yongpei},
	journal={arXiv preprint arXiv:1708.07930},
	year={2017}
}

@article{lin2022bayesian,
	title={Bayesian Risk {M}arkov Decision Processes},
	author={Lin, Yifan and Ren, Yuxuan and Zhou, Enlu},
	journal={Advances in Neural Information Processing Systems},
	volume={35},
	pages={17430--17442},
	year={2022}
}

@article{mohajerin2018data,
  title={Data-driven distributionally robust optimization using the {W}asserstein metric: Performance guarantees and tractable reformulations},
  author={Mohajerin Esfahani, Peyman and Kuhn, Daniel},
  journal={Mathematical Programming},
  volume={171},
  number={1},
  pages={115--166},
  year={2018},
  publisher={Springer}
}

@article{philpott2018distributionally,
	title={Distributionally robust {SDDP}},
	author={Philpott, Andrew B and de Matos, Vitor L and Kapelevich, Lea},
	journal={Computational Management Science},
	volume={15},
	pages={431--454},
	year={2018},
	publisher={Springer}
}

@article{pichler2022quantitative,
	title={Quantitative stability analysis for minimax distributionally robust risk optimization},
	author={Pichler, Alois and Xu, Huifu},
	journal={Mathematical Programming},
	volume={191},
	number={1},
	pages={47--77},
	year={2022},
	publisher={Springer}
}

@article{cooper2012performance,
	title={Performance guarantees for empirical {M}arkov decision processes with applications to multiperiod inventory models},
	author={Cooper, William L and Rangarajan, Bharath},
	journal={Operations Research},
	volume={60},
	number={5},
	pages={1267--1281},
	year={2012},
	publisher={INFORMS}
}

@article{fullner2025stochastic,
  title={Stochastic dual dynamic programming and its variants: A review},
  author={F{\"u}llner, Christian and Rebennack, Steffen},
  journal={SIAM Review},
  volume={67},
  number={3},
  pages={415--539},
  year={2025},
  publisher={SIAM}
}

@article{philpott2008convergence,
  title={On the convergence of stochastic dual dynamic programming and related methods},
  author={Philpott, Andrew B and Guan, Ziming},
  journal={Operations Research Letters},
  volume={36},
  number={4},
  pages={450--455},
  year={2008},
  publisher={Elsevier}
}

@book{rockafellar2015convex,
  title={Convex Analysis},
  author={Rockafellar, Ralph Tyrell},
  year={2015},
  publisher={Princeton university press}
}

@article{ma2025bayesian,
  title={Bayesian distributionally robust variational inequalities: regularization and quantification},
  author={Ma, Wentao and Chen, Zhiping and Chen, Xiaojun},
  journal={arXiv preprint arXiv:2509.16537},
  year={2025}
}

@article{xie2021nonparametric,
	title={A nonparametric {B}ayesian framework for uncertainty quantification in stochastic simulation},
	author={Xie, Wei and Li, Cheng and Wu, Yuefeng and Zhang, Pu},
	journal={SIAM/ASA Journal on Uncertainty Quantification},
	volume={9},
	number={4},
	pages={1527--1552},
	year={2021},
	publisher={SIAM}
}

@article{rieder1975bayesian,
	title={Bayesian dynamic programming},
	author={Rieder, Ulrich},
	journal={Advances in Applied Probability},
	volume={7},
	number={2},
	pages={330--348},
	year={1975},
	publisher={Cambridge University Press}
}

@article{yang2020wasserstein,
	title={Wasserstein distributionally robust stochastic control: {A} data-driven approach},
	author={Yang, Insoon},
	journal={IEEE Transactions on Automatic Control},
	volume={66},
	number={8},
	pages={3863--3870},
	year={2020},
	publisher={IEEE}
}

@article{taskesen2023distributionally,
  title={Distributionally robust linear quadratic control},
  author={Taskesen, Bahar and Iancu, Dan and Ko{\c{c}}yi{\u{g}}it, {\c{C}}a{\u{g}}{\i}l and Kuhn, Daniel},
  journal={Advances in Neural Information Processing Systems},
  volume={36},
  pages={18613--18632},
  year={2023}
}

@article{kim2023distributional,
  title={Distributional robustness in minimax linear quadratic control with {W}asserstein distance},
  author={Kim, Kihyun and Yang, Insoon},
  journal={SIAM Journal on Control and Optimization},
  volume={61},
  number={2},
  pages={458--483},
  year={2023},
  publisher={SIAM}
}

@article{van2015distributionally,
	title={Distributionally robust control of constrained stochastic systems},
	author={Van Parys, Bart PG and Kuhn, Daniel and Goulart, Paul J and Morari, Manfred},
	journal={IEEE Transactions on Automatic Control},
	volume={61},
	number={2},
	pages={430--442},
	year={2015},
	publisher={IEEE}
}

@article{rockafellar2000optimization,
	title={Optimization of conditional value-at-risk},
	author={Rockafellar, R Tyrrell and Uryasev, Stanislav and others},
	journal={Journal of risk},
	volume={2},
	pages={21--42},
	year={2000},
	publisher={Citeseer}
}

@article{xu2021quantitative,
	title={Quantitative statistical robustness in distributionally robust optimization models},
	author={Xu, Huifu and Zhang, Sainan},
	journal={Pacific Journal of Optimization Special Issue},
	year={2021}
}

@incollection{romisch2003stability,
  title     = {Stability of Stochastic Programming Problems},
  author    = {R{\"o}misch, Werner},
  booktitle = {Handbooks in Operations Research and Management Science},
  volume    = {10},
  pages     = {483--554},
  year      = {2003},
  publisher = {Elsevier}
}
	
\end{document}